\tikzset{
    >=stealth,
    every picture/.style={thick},
    graphs/every graph/.style={empty nodes},
}
\tikzstyle{vertex}=[
\tikzstyle{printersafe}=[decoration={snake,amplitude=0pt}]
\newcommand{\codim}{\operatorname{codim}}
\newcommand{\mult}{\operatorname{mult}}
\newcommand{\Spec}{\operatorname{Spec}}
\renewcommand{\qq}{\mathbb{Q}}
\newcommand{\zz}{\mathbb{Z}}
\newcommand{\nn}{\mathbb{N}}
\newcommand{\rr}{\mathbb{R}}
\def\O#1.{\mathcal {O}_{#1}}			
\def\pr #1.{\mathbb P^{#1}}				
\def\af #1.{\mathbb A^{#1}}				
\def\ses#1.#2.#3.{0\to #1\to #2\to #3 \to 0}		
\def\xrar#1.{\xrightarrow{#1}}			
\def\K#1.{K_{#1}}						
\def\bA#1.{\mathbf{A}_{#1}}				
\def\bM#1.{\mathbf{M}_{#1}}				
\def\bL#1.{\mathbf{L}_{#1}}				
\def\bB#1.{\mathbf{B}_{#1}}				
\def\bK#1.{\mathbf{K}_{#1}}				
\def\subs#1.{_{#1}}						
\def\sups#1.{^{#1}}						
\DeclareMathOperator{\coeff}{coeff}	
\DeclareMathOperator{\Supp}{Supp}		
\DeclareMathOperator{\glct}{glct}			
\newcommand{\rar}{\rightarrow}		
\newcommand{\drar}{\dashrightarrow}	
  \newtheorem{theorem}{Theorem}[section]
  \newtheorem{lemma}[theorem]{Lemma}
  \newtheorem{proposition}[theorem]{Proposition}
  \newtheorem{corollary}[theorem]{Corollary}
  \newtheorem{conjecture}[theorem]{Conjecture}
  \newtheorem{notation}[theorem]{Notation}
  \newtheorem{definition}[theorem]{Definition}
  \newtheorem{example}[theorem]{Example}
\newtheorem{remark}[theorem]{Remark}
\theoremstyle{remark}
\numberwithin{equation}{section}
\begin{document}

\title[Strong $(\delta,n)$-complements]{Strong $(\delta,n)$-complements for semi-stable morphisms}

\author[S. Filipazzi]{Stefano Filipazzi}
\address{
UCLA Mathematics Department,
Box 951555,
Los Angeles, CA 90095-1555, USA
}
\email{filipazzi@math.ucla.edu}

\author[J.~Moraga]{Joaqu\'in Moraga}
\address{Department of Mathematics, Princeton University, Fine Hall, Washington Road, Princeton, NJ 08544-1000, USA
}
\email{jmoraga@princeton.edu}

\subjclass[2010]{Primary 14E30, 
Secondary 14F18.}

\begin{abstract}
We prove the boundedness of global strong $(\delta,n)$-complements for generalized $\epsilon$-log canonical pairs of Fano-type.
We also prove some partial results towards boundedness of local strong $(\delta,n)$-complements 
for semi-stable morphisms.
As applications, we prove an effective generalized canonical bundle formula for generalized klt pairs and an effective generalized adjunction formula for exceptional generalized log canonical centers. Moreover, we prove that the existence of strong $(\delta,n)$-complements implies a conjecture due to M$^{\rm c}$Kernan concerning the singularities of the base of a Mori fiber space.
\end{abstract}

\maketitle

\setcounter{tocdepth}{1}
\tableofcontents

\section{Introduction}

The theory of complements was introduced by Shokurov in~\cite{Sho92} to study $3$-fold log flips.
This technique applies to the analysis of contractions of normal varieties $X \rar Z$ with $X$ of Fano-type over a neighborhood of $Z$.
Given a point $z\in Z$ with $X$ $\epsilon$-log canonical over $z$, it is predicted that there exist a positive integer $n$ and a non-negative real number $\delta$, both depending only on $\dim X$ and $\epsilon$, such that the linear system $|-nK_X|$ contains an element $\Gamma$ with $(X,\Gamma/n)$ $\delta$-log canonical over $z$. This conjecture is known as boundedness of strong $(\delta,n)$-complements in dimension $d$.
It is expected that we can take $\delta=\epsilon$~\cite[Conjecture 1.1.3]{Bir04}.
In the above setting, the case $\epsilon=\delta=0$ (resp. $\epsilon\geq \delta>0)$ corresponds to the study of log canonical complements (resp. klt complements).
If $Z$ is the spectrum of an algebraically closed field of characteristic zero, we say that we are in the {\em global setting}; otherwise, we say that we are in the {\em local setting}.

In~\cite{Sho00}, Shokurov introduced more general forms of complements for log pairs
and he proved the existence of bounded $(0,n)$-complements for surfaces.
Then, Prokhorov proved boundedness of $(0,n)$-complements for threefold extremal contractions and threefold conic fibrations~\cites{Pro00,Pro01}.
In~\cite{PS01}, Prokhorov and Shokurov proved that boundedness of $(0,n)$-complements in the local setting in dimension $d$ follows from the minimal model program in dimension $d$ and the existence of bounded global $(0,n)$-complements in dimension $d-1$.
In~\cite{Bir04}, Birkar proved the existence of bounded $(\delta,n)$-complements for surfaces.
Kudryavtsev used the theory of complements to study log del Pezzo surfaces with no discrepancy less than $-\frac{6}{7}$~\cite{Kud04}, and Kudryavtsev and Fedorov classified non-$\qq$-complemented surfaces~\cite{KF04}.
In~\cite{PS09}, Prokhorov and Shokurov proved that boundedness of $(0,n)$-complements follows from the conjecture of effective adjunction and the Borisov--Alexeev--Borisov conjecture (or BAB conjecture for short).

Finally, boundedness of strong $(0,n)$-complements for log canonical pairs was solved by Birkar~\cite[Theorem 1.8]{Bir16a}. This is an essential technique in the proof of the BAB conjecture~\cite[Theorem 1.1]{Bir16b}. In this paper, we conjecture the following version of boundedness of strong $(\delta,n)$-complements for generalized pairs:

\begin{conjecture}\label{encomplement}
Let $d$ and $p$ be two natural numbers, $\epsilon \in [0,1)$, and $\Lambda\subset \qq$ a set satisfying the descending chain condition with rational accumulation points. There exist a natural number $n$
and a non-negative real number $\delta$ only depending on $d,p,\epsilon$ and $\Lambda$ satisfying the following. Let $X\rightarrow Z$ be a contraction between normal quasi-projective varieties, $(X,B+M)$ be a generalized $\epsilon$-log canonical pair of dimension $d$ such that
\begin{itemize}
\item $-(K_X+B+M)$ is nef over $Z$;
\item $X$ is of Fano-type over $Z$;
\item $\coeff(B)\subset \Lambda$; and 
\item $pM'$ is Cartier. 
\end{itemize}
Then, for every point $z\in Z$ there exists a strong $(\delta,n)$-complement for $(X,B+M)$ over $z$.
Moreover, we can pick $\delta>0$ if $\epsilon>0$.
\end{conjecture}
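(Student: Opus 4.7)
The plan is to handle first the global case, where $Z = \Spec \kk$, and then to reduce the local case to the global one through a Prokhorov--Shokurov type machinery, adapted to the generalized pair setting.

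For the global case, I would first reduce to the situation where $(X, B+M)$ is generalized klt: if $\epsilon>0$ this is automatic, while the boundary case $\epsilon=0$ would be handled via a small perturbation of $B$ combined with non-vanishing for $-(K_X+B+M)$. After this reduction, the central input is a BAB-type boundedness result for generalized $\epsilon$-lc Fano pairs with $\coeff(B)\subset\Lambda$ and $pM'$ Cartier: the DCC hypothesis on $\Lambda$ with rational accumulation points is precisely what is needed to place $(X, B+M)$ in a bounded family up to isomorphism. Once inside a bounded family, the complement index $n$ and the singularity gap $\delta$ can be extracted uniformly: an effective birationality statement produces a bounded $n$ so that $|-n(K_X+B+M)|$ contains a member $\Gamma$ with $(X, B+\Gamma/n+M)$ generalized log canonical, and then a Noetherian/openness argument on the bounded family upgrades generalized lc to generalized $\delta$-lc for some uniform $\delta>0$.

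For the local case, the plan is to imitate the Prokhorov--Shokurov reduction: over a neighborhood of $z\in Z$, run an MMP on $X/Z$ to a Mori fiber space $X'\rar Z'/Z$, and then apply a generalized canonical bundle formula (an application of the global case, as announced in the abstract) to descend the generalized pair structure to $Z'$ and reduce to a lower-dimensional \emph{global} complement problem on $Z'$. The complement produced on $Z'$ is then pulled back to $X'$ and transported back to $X$ through the steps of the MMP. The semi-stability hypothesis enters precisely to guarantee that the canonical bundle formula behaves well enough so that the base pair is again generalized $\epsilon'$-lc with coefficients in a controlled DCC set, keeping the resulting $\delta$ bounded away from zero.

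The principal obstacle is the interaction of the b-nef part $M$ with the complement $\Gamma$. Because $M$ lives naturally on a higher birational model, one must choose the descent model carefully so that the Cartier index condition $pM'$ Cartier is preserved throughout the MMP and the adjunction step, and then verify that $(X, B + \Gamma/n + M)$ remains generalized $\delta$-log canonical after adding $\Gamma/n$; this is the step where the uniformity of $\delta$ can easily collapse. A secondary difficulty, which is responsible for restricting the local results in this paper to semi-stable morphisms, is the absence of an effective generalized canonical bundle formula in full generality: without uniform control on the moduli and discriminant parts on the base, the singularities upstairs and downstairs cannot be matched along adjunction, and the uniform gap $\delta$ degenerates to $0$.
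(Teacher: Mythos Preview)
The statement is a \emph{conjecture} in the paper and is not proved there in full; only the special cases in Theorems~\ref{localconj0n}, \ref{globalen}, and \ref{localen} are established, so there is no complete proof to compare against. Where your outline overlaps those partial results, it diverges substantially. In the global case with $\epsilon=0$ your BAB-based approach fails outright, since log canonical Fanos are not bounded; the paper instead rounds coefficients up (Lemma~\ref{incrcoeff}) to reduce to finite $\Lambda$ and then invokes Birkar's $(0,n)$-complement theorem. For $\epsilon>0$ the paper only treats $\Lambda$ finite or $M'=0$, and your Noetherian/openness step does not obviously survive the variation of the nef b-divisor $M'$ across a bounding family, which is precisely why the general global case is left open.

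For the local case, your Mori-fiber-space-plus-canonical-bundle-formula route is not what the paper does and has a genuine gap. For $\epsilon=\delta=0$ the paper extracts a generalized plt center $S$ meeting the central fiber, restricts via divisorial generalized adjunction, and lifts the lower-dimensional complement from $S$ back to $X$ using Kawamata--Viehweg vanishing (Propositions~\ref{prop1} and \ref{prop2}); no descent to the base of a fibration occurs. For $\epsilon>0$ under the semi-stable hypothesis the paper inducts on $\dim Z$ via general hyperplane sections of $Z$ and uses base change along a generalized dlt model of the base. Your proposed descent to $Z'$ would need the induced generalized pair on $Z'$ to be uniformly $\epsilon'$-lc, but controlling base singularities of a Fano fibration is exactly M$^{\rm c}$Kernan's conjecture, which the paper derives \emph{from} Conjecture~\ref{encomplement} (Theorem~\ref{McKernanthm}); invoking it here is circular.
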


In~\cite[Theorem 1.10]{Bir16a}, Birkar proved Conjecture~\ref{encomplement} in the global setting when $\epsilon=\delta=0$ and $\Lambda$ is a set of hyperstandard coefficients (see, e.g.,~\cite[Section 2.2]{Bir16a}). Moreover, in~\cite[Theorem 1.8]{Bir16a}, Birkar proved Conjecture~\ref{encomplement} in the local setting for $\epsilon=\delta=0$, $M'=0$
and $\Lambda$ a set of hyperstandard coefficients.

We prove the statement of the conjecture in the local setting for generalized log canonical pairs. This is a generalization of~\cite[Theorem 1.8]{Bir16a} to the setting of generalized pairs. 

\begin{theorem}\label{localconj0n}
Conjecture~\ref{encomplement} holds for $\delta=\epsilon=0$.
\end{theorem}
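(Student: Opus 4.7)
My plan is to follow the Prokhorov--Shokurov template, reducing a local strong $(0,n)$-complement problem in dimension $d$ to the global strong $(0,n)$-complement problem in dimension $d-1$. The latter is precisely the global version of Conjecture~\ref{encomplement}, which the paper establishes as a main theorem. The input from the minimal model program is the existence of $\qq$-factorial generalized dlt modifications for generalized log canonical pairs, plus the relative base-point-free theorem for generalized lc pairs of Fano type.

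After shrinking $Z$, I assume $z$ is a closed point. Since strong $(0,n)$-complements descend under birational contractions, I replace $(X, B+M)$ by a $\qq$-factorial generalized dlt modification, which remains of Fano type over $Z$. Because $X$ is of Fano type over $Z$ and $-(K_X + B + M)$ is nef over $Z$, the relative base-point-free theorem provides an effective $\qq$-divisor $D \sim_{\qq, Z} -(K_X + B + M)$. Choosing $D$ with sufficiently high multiplicity along the fiber over $z$ and taking the largest $t \in (0,1]$ such that $(X, B + tD + M)$ remains generalized log canonical, I produce a pair with a non-klt center over $z$. As $B + tD \geq B$ and $K_X + B + tD + M \equiv_Z K_X + B + M$, any strong $(0,n)$-complement of the modified pair is one of the original. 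Hence I may assume $(X, B+M)$ itself has a log canonical center $S$ mapping to $z$.

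Passing to a $\qq$-factorial dlt model and selecting a minimal such center, I apply the generalized adjunction formula of Birkar--Zhang to obtain
\[
(K_X + B + M)|_S = K_S + B_S + M_S,
\]
where $(S, B_S + M_S)$ is a generalized log canonical pair of dimension at most $d-1$ of Fano type over the image of $z$. The coefficients of $B_S$ lie in a DCC set with rational accumulation points depending only on $\Lambda$ and $p$, and $p' M'_S$ is Cartier for some $p'$ depending only on $p$ and $d$. By the global case proved earlier in the paper, there exist $n_0$ and an effective $\Gamma_S \geq B_S$ with $n_0(K_S + \Gamma_S + M_S) \sim 0$ over $z$.

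The principal obstacle is lifting $\Gamma_S$ to a strong $(0, n_0)$-complement $(X, \Gamma + M)$ with $\Gamma|_S = \Gamma_S$ and $n_0(K_X + \Gamma + M) \sim 0$ over $z$. In the classical setting this is carried out via Kawamata--Viehweg vanishing applied to the defining ideal sheaf of $S$, combined with rounding arguments on a log resolution. In the generalized setting one must work on a sufficiently high birational model where $p M'$ is Cartier and nef, and invoke a twisted vanishing theorem to ensure the restriction map on global sections is surjective. Controlling the interplay between the rounding, the Cartier index $p$, and the DCC assumption on $\Lambda$ in order to produce a uniform bound for $n_0$ independent of the particular $(X, B+M)$ is the subtle part; this is exactly the bookkeeping performed by Birkar in~\cite{Bir16a} in the usual setting, which we must extend to generalized pairs.
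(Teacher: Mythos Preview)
Your outline is essentially the paper's approach: create a generalized log canonical center over $z$, extract a divisorial center $S$ via a dlt modification, restrict by generalized divisorial adjunction, complement on $S$ by induction (or the global theorem), and lift via Kawamata--Viehweg vanishing; the paper packages the last two steps as Propositions~\ref{prop1} and~\ref{prop2} and the preliminary MMP reductions in the proof of Theorem~\ref{localconj0n}. Two points to tighten: the lifting argument needs $S$ to be a prime \emph{divisor} in $\lfloor B\rfloor$ with a plt structure around it (not a ``minimal'' center of higher codimension), and before the rounding step the paper first reduces to a \emph{finite} coefficient set via Lemma~\ref{incrcoeff}, without which the inequality $nB^+\geq nB$ required for a \emph{strong} complement need not follow from the weak one produced by the standard rounding.
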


Using techniques introduced by Birkar in~\cite{Bir16a} to prove boundedness of $(0,n)$-complements and boundedness of Fano varieties~\cite[Theorem 1.1]{Bir16b}, we prove the global version of Conjecture~\ref{encomplement}:

\begin{theorem}\label{globalen}
Conjecture~\ref{encomplement} holds if $Z={\rm Spec}(k)$, where $k$ is an algebraically closed field of characteristic zero, in the following cases:
\begin{enumerate}
    \item $\epsilon=0$; 
    \item $M'$ is trivial; or 
    \item $\Lambda$ is finite. 
\end{enumerate}
Moreover, in any case, we can take $\delta=\epsilon$.
\end{theorem}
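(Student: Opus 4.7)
The plan is to adapt the strategy Birkar uses in \cite{Bir16a, Bir16b} for boundedness of $(0,n)$-complements and for boundedness of $\epsilon$-lc Fano varieties, extending it to the generalized pair setting under the two hypotheses at hand. The first step is an MMP-style reduction: using that $X$ is of Fano type and $-(K_X+B+M)$ is nef, I would reduce to the case where $(X,B+M)$ is generalized $\epsilon$-lc with $-(K_X+B+M)$ nef and big, so that after a small perturbation of the boundary the underlying variety is an actual $\epsilon$-lc Fano type variety. Throughout this step, care is needed to preserve the generalized structure and the constraints $\coeff(B)\subset\Lambda$ and $pM'$ Cartier.

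The heart of the argument is to place $X$, together with the boundary and nef-part data, in a log bounded family, and then extract a complement of bounded index from a general member of an anticanonical linear system. In the case $M'=0$ the required boundedness is a direct application of Birkar's theorem on boundedness of $\epsilon$-lc Fano varieties of fixed dimension with DCC boundary coefficients, while in the case $\Lambda$ finite one combines the finiteness of coefficients with the Cartier index bound on $M'$ to place the triple $(X,B,M')$ in a bounded family via a generalized BAB-type statement. Once such boundedness is established, there exists a uniform $n$ so that $-n(K_X+B+M)$ is a Cartier divisor with uniformly controlled base-point behavior on every member of the family, and one can take $\Gamma\in\lvert -n(K_X+B+M)\rvert$ general. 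The candidate strong complement is then $(X,B+\Gamma/n+M)$, which satisfies $K_X+B+\Gamma/n+M\sim_{\qq}0$ by construction. To obtain $\delta=\epsilon$ rather than a weaker uniform bound, I would use a Bertini-type argument: because the family is bounded and $n$ is uniform, a sufficiently general $\Gamma$ in a very ample linear system avoids the non-$\epsilon$-lc locus of $(X,B+M)$ and introduces at worst mild and controllable singularities, so the resulting generalized pair remains $\epsilon$-lc.

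The main obstacle is the boundedness step in the case $\Lambda$ finite with nontrivial $M'$: one must simultaneously control the variety, the boundary, and the data of the nef part $M'$ up to Cartier equivalence. Arbitrary DCC coefficients together with a bounded-index nef part would not suffice, which explains why the theorem is stated with the two restrictive alternatives $M'=0$ or $\Lambda$ finite. Once the appropriate generalized boundedness statement is in hand, both the existence of a uniform $n$ and the preservation of the $\epsilon$-lc property under a general member of $\lvert -n(K_X+B+M)\rvert$ reduce to standard Bertini and effective very-ampleness arguments on a bounded family, and the output is exactly a strong $(\epsilon,n)$-complement in the global setting.
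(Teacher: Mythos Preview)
Your outline is essentially correct for the case $\Lambda$ finite: boundedness of $X$ via a generalized BAB statement, bounded Cartier index for $\K X.+B+M$, effective basepoint-freeness, and a general element of $|-n(\K X.+B+M)|$ giving a strong $(\epsilon,n)$-complement is exactly what the paper does (Lemma~\ref{lemma strong complements finite coeff}).

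There is, however, a genuine gap in your treatment of the case $M'=0$ with $\Lambda$ infinite. You assert that once the pairs $(X,B)$ lie in a log bounded family, ``there exists a uniform $n$ so that $-n(\K X.+B+M)$ is a Cartier divisor.'' This is false in general: log boundedness controls the support of $B$ but not the denominators of its coefficients. For instance, take $X=\pr 1.$ and $B=(1-\tfrac{1}{k})P$ with $k$ varying; these pairs are log bounded and $\tfrac{1}{2}$-lc, yet the Cartier index of $\K X.+B$ equals $k$, which is unbounded. Consequently, no uniform $n$ exists making $-n(\K X.+B)$ Cartier, and your Bertini step cannot start. The paper circumvents this by a limiting argument: one argues by contradiction with a sequence $(X_i,B_i)$, passes to a subsequence on a fixed bounding family where the coefficients $\coeff(B_i)$ increase to a limit in $\overline{\Lambda}\subset\qq$, checks that the limiting pairs $(X_i,\mathcal{B}_i^{(\infty)})$ remain $\epsilon$-lc (this uses a uniform log resolution over the family), and then applies the finite-coefficient case to $(X_i,\mathcal{B}_i^{(\infty)})$. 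Since $\mathcal{B}_i^{(\infty)}\geq B_i$, the resulting complement is also a complement for $(X_i,B_i)$, yielding the contradiction.

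A secondary point: your sketch presupposes $\epsilon>0$ for the BAB-type boundedness, but the theorem also covers $\epsilon=0$. The paper handles $\epsilon=0$ separately by first using Lemma~\ref{incrcoeff} to increase the coefficients of $B$ into a finite set $\mathcal{C}_m$ while preserving log canonicity and the nefness of $-(\K X.+B_m+M)$, and then invoking \cite[Theorem~1.10]{Bir16a} directly. Your outline does not indicate how the $\epsilon=0$ case would be treated.
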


The third theorem of this paper is a partial result towards boundedness of local strong $(\delta,n)$-complements.
The main techniques involved in the proof of this statement are the theory of semi-stable families and the theory of local log canonical complements.

\begin{theorem}\label{localen}
Let $m$ be a positive integer and $\epsilon$ a positive real number. 
Then, Conjecture~\ref{encomplement} holds if 
$\Lambda$ is finite, $M'$ is trivial, $mK_Z$ is Cartier and $X\rightarrow Z$ is a semi-stable morphism
for the pair $(X,B)$.
Moreover, we can take $\delta > 0$ depending on $m$ and the setup of Conjecture \ref{encomplement}.
\end{theorem}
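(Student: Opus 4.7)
The plan is to reduce to the global statement Theorem~\ref{globalen} applied to the fiber of $f\colon X\to Z$ over $z$, and to lift the resulting complement back to $X$ using the semi-stable structure together with a local log canonical complement provided by Theorem~\ref{localconj0n}.

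I first apply Theorem~\ref{localconj0n} to produce a local $(0,n_0)$-complement of $(X,B)$ over $z$ with $n_0$ bounded in terms of $d$ and $\Lambda$; this will serve as an auxiliary boundary to make a Kawamata--Viehweg type vanishing applicable on $X$ in the lifting step. Next, consider the fiber $F=X_z$. Semi-stability of $f$ for $(X,B)$ ensures that $F$ is reduced, that its components are disjoint from $\Supp(B)$, and that standard adjunction $(K_X+F+B)|_F = K_F + B_F$ produces a pair whose coefficients lie in a finite set depending on $\Lambda$ and $m$. Combined with $mK_Z$ being Cartier, the discrepancies of $(F,B_F)$ are controlled by those of $(X,B+F)$, so $(F,B_F)$ is $\epsilon'$-lc for some $\epsilon'=\epsilon'(\epsilon,m)>0$. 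Since $F$ is of Fano type globally and $-(K_F+B_F)$ is nef, Theorem~\ref{globalen} with trivial moduli part and finite coefficient set yields a strong $(\epsilon',n_1)$-complement $\Gamma_F$ of $(F,B_F)$ with $n_1$ bounded.

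Setting $n$ to be a common multiple of $n_0$ and $n_1$, the main task is to lift $\Gamma_F$ to a divisor $\Gamma$ on $X$ defined in a neighborhood of $F$ such that $(X, B + \tfrac{1}{n}\Gamma)$ becomes the desired strong $(\delta, n)$-complement over $z$. For the extension of sections I would apply a Kawamata--Viehweg type vanishing to a suitable twist by $-F$, available thanks to the semi-stable structure, the nefness of $-(K_X+B)$ over $Z$, and the log canonical complement from the first step, which supplies the boundary divisor needed to apply vanishing. The hardest step is then the discrepancy bookkeeping: verifying that the lifted $\Gamma$ makes $(X, B + \Gamma/n)$ a $\delta$-lc pair over $z$ for some $\delta>0$ depending only on $\epsilon, m$ and the other data. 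By inversion of adjunction in the semi-stable setting, discrepancies along valuations whose center meets $F$ are controlled from below by those of $(F, B_F+\Gamma_F/n)$ up to a factor depending on $m$ (coming from the Cartier index of $K_Z$), while discrepancies along the remaining valuations are already controlled by the original $\epsilon$-lc property of $(X,B)$ together with the local $(0,n_0)$-complement. Combining these two estimates produces the uniform $\delta>0$ and completes the proof sketch.
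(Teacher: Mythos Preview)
There is a genuine gap in your reduction to the fiber. Semi-stability only guarantees that each fiber $(X_z,B_z)$ is a connected \emph{semi}-log canonical pair; in particular $F=X_z$ need not be normal or irreducible, so the phrase ``$F$ is of Fano type globally'' is not meaningful and Theorem~\ref{globalen} cannot be applied to $(F,B_F)$. Even after passing to the normalization, there is no reason the individual components are of Fano type, and you give no mechanism for gluing complements across components. A second, related issue is the claim that $(F,B_F)$ is $\epsilon'$-lc with $\epsilon'=\epsilon'(\epsilon,m)>0$: the hypothesis that $m\K Z.$ is Cartier does not make $z$ a smooth point of $Z$, so $F$ is not in general a complete intersection of Cartier divisors on $X$, and the ``factor depending on $m$'' you invoke in the inversion-of-adjunction step has no obvious source. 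Likewise, the condition ``$\Supp(B)$ avoids the generic and codimension one singular points of every fiber'' in the definition of semi-stability is not the same as ``components of $F$ are disjoint from $\Supp(B)$''. Finally, the lifting step (``a Kawamata--Viehweg type vanishing to a suitable twist by $-F$'') is left entirely schematic; since $F$ is only $\qq$-Cartier of unknown index and possibly non-normal, the usual restriction sequence and vanishing argument do not apply as stated.

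For comparison, the paper does not work fiberwise at all. It argues by induction on $\dim Z$: the curve base case (Lemma~\ref{semistable over curve case}) already avoids the special fiber by using \emph{nearby} fibers to get boundedness, then takes a log canonical complement of $(X,B+X_o)$ and subtracts $X_o$ to obtain a klt complement. For $\dim Z\geq 2$ the paper cuts by a general hyperplane on $Z$ to handle non-closed points, and for a closed point it passes to the base via the canonical bundle formula, produces a generalized pair $(Z,B_Z+M_Z)$ with bounded data, shows $(X,B+f^*(B_Z+M_Z))$ is generalized log canonical by base-changing to a log resolution of the base (Proposition~\ref{base change normal} is used here), takes a $(0,n)$-complement of that enlarged pair, and then argues by contradiction via a generalized dlt model of $(Z,B_Z+M_Z)$ that the resulting complement is in fact klt over $z$. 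The role of $m\K Z.$ Cartier is to make $l(B_Z+M_Z)$ Cartier near $z$, not to control discrepancies on the fiber.
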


Now, we turn to discuss some applications of the main theorems. The first application is related to the canonical bundle formula and adjunction formula for generalized pairs. Generalized divisorial adjunction was introduced in~\cites{BZ16,Bir16a}
and then generalized to centers of higher codimension in~\cite{Fil18}. However, in the latter case, it is not known how to control the coefficients of the induced generalized pair. Our first result in this direction is an effective version of the generalized canonical bundle formula:

\begin{theorem}\label{genbundfor}
Let $d$ and $p$ be two natural numbers and $\Lambda \subset \qq$ be a set satisfying the descending chain condition with rational accumulation points. Then, there exist a natural number $q$ and a set $\Omega \subset \qq$ satisfying the descending chain condition with rational accumulation points, only depending on $d,p$ and $\Lambda$, satisfying the following. 
Let $f\colon X\rightarrow Z$ be a contraction between normal quasi-projective varieties,
$(X,B+M)$ be a generalized log canonical pair of dimension $d$ such that 
\begin{itemize}
    \item $K_X+B+M\sim_{\qq,Z} 0$;
    \item $(X,B+M)$ is generalized klt over the generic point of $Z$;
    \item $X$ is of Fano-type over a non-trivial open set $U$ of $Z$;
    \item ${\rm coeff}(B)\subset \Lambda$; and 
    \item $pM'$ is Cartier. 
\end{itemize}
Then there exists a generalized log canonical pair $(Z,B_Z+M_Z)$ on $Z$, so that 
\begin{itemize}
    \item $K_X+B+M\sim_\qq f^*(K_Z+B_Z+M_Z)$, and this formula is preserved under base change;
    \item ${\rm coeff}(B_Z)\subset \Omega$; and 
    \item $qM'_Z$ is Cartier.
\end{itemize}
\end{theorem}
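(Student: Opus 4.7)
The existence of $(Z,B_Z+M_Z)$ as a generalized lc pair with $K_X+B+M\sim_\qq f^*(K_Z+B_Z+M_Z)$, compatible with base change, is the generalized canonical bundle formula of~\cites{BZ16,Fil18}; base-change compatibility uses exactly the hypothesis that $(X,B+M)$ is generalized klt over the generic point of $Z$. The new content of Theorem~\ref{genbundfor} is therefore effective: one must produce a DCC set $\Omega$ with rational accumulation points controlling $\coeff(B_Z)$, and a uniform Cartier index $q$ for $M_Z'$, depending only on $d$, $p$, and $\Lambda$.

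For the discriminant, I would proceed prime by prime. Recall $\coeff_P(B_Z)=1-t_P$, where $t_P$ is the generalized lc threshold of $f^*P$ for $(X,B+M)$ computed at the generic point $\eta_P$. The finitely many primes $P$ contained in $Z\setminus U$ contribute only finitely many extra coefficients, absorbed by the ACC for generalized lc thresholds from~\cite{BZ16}. For the remaining primes $P$, $X$ is of Fano type over a neighborhood of $\eta_P$, so Theorem~\ref{localconj0n} applies to $(X,B+M)$ over $\eta_P$ and produces a strong $(0,n)$-complement $B^+\geq B$ with $n=n(d,p,\Lambda)$ and $n(K_X+B^++M)\sim 0$ locally. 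Since $K_X+B+M\sim_{\qq,Z}0$, the divisor $B^+-B$ is $\qq$-linearly trivial over $\eta_P$ and thus supported on $f^{-1}(P)$; the integrality of $nB^+$ combined with the multiplicities appearing in $f^*P$ then forces $nt_P$ to have controlled denominator. Marrying this with ACC for thresholds places $\coeff_P(B_Z)$ in the desired DCC set with rational accumulation points.

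The moduli part $M_Z$ arises from the variation of Hodge structure on the general fiber of $f$, modified by the contribution of $M'$. To bound the Cartier index of $M_Z'$ I would combine the hypothesis that $pM'$ is Cartier with the strong complement constructed above: the complement descends via the canonical bundle formula to a divisor on $Z$ with controlled denominator, and a standard descent argument on a sufficiently high log resolution extracts a uniform $q$ with $qM_Z'$ Cartier. The principal obstacle is this last step: whereas the discriminant bound is a clean consequence of complements plus ACC, controlling the Cartier index of $M_Z'$ requires converting the effectivity of the complement on $X$ into effectivity for the moduli $b$-divisor on $Z$, and this is precisely where the \emph{strong} form of Theorem~\ref{localconj0n} is used—one needs $B^+-B$ to pull back from a bounded-denominator divisor on $Z$, not merely to be $\qq$-trivial over $\eta_P$.
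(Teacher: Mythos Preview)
Your treatment of the discriminant over primes $P$ with $\eta_P\in U$ is essentially the paper's curve-case computation (Lemma~\ref{fanotypeoveropen}): the strong $(0,n)$-complement forces $B^+-B=t_Pf^*P$ near $\eta_P$, and the explicit formula $\coeff_P(B_Z)=1-(b^+-b)/m$ with $b\in\Lambda$, $nb^+\in\zz$, $m\in\nn$ gives the DCC set with rational accumulation points. However, your handling of primes outside $U$ is a genuine gap. Saying they are ``finitely many'' is only true for a \emph{fixed} $f$, which is useless for a uniform $\Omega$; and invoking ACC for generalized lc thresholds from~\cite{BZ16} gives only that $\{1-t_P\}$ satisfies DCC, not that the accumulation points are rational. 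The paper does \emph{not} split primes this way: it cuts by general hyperplanes on $Z$ (using Lemma~\ref{coeffdivadj} to control the coefficients under generalized divisorial adjunction) until $Z$ is a curve, and then invokes Lemma~\ref{ftovercurve} to show that $X$ is of Fano type over the \emph{entire} curve. Thus every prime is treated uniformly via complements and the explicit formula; there are no ``bad'' primes left over.

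Your moduli-part argument is not a proof. ``A standard descent argument on a sufficiently high log resolution'' does not capture what is actually needed, and you yourself flag this as the principal obstacle. The paper proceeds in two steps. First, Lemma~\ref{fanotypeoveropen} shows $qM_Z$ is a Weil divisor, again by cutting to a curve and using the complement $\Theta=B+\sum_z t_zf^*z$ together with Proposition~\ref{prop pull-back cartier} to force $q(K_Z+\Theta_Z+M_Z)$ Cartier. Second---and this is the real work in the proof of Theorem~\ref{genbundfor}---one must show $qM'_Z$ is Cartier on a \emph{resolution} $Z_0\to Z$. The paper resolves $(X,B+M)$, runs an MMP over $X_0=X\times_Z Z_0$ to make the model Fano type over an open set of $Z_0$, then runs a second MMP over $Z_0$ (via~\cites{Bir12,HMX14}) to reach a good minimal model $Y\to Z_1\to Z_0$ with $K_Y+\Delta_Y+M_Y\sim_{\qq,Z_1}0$. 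Now Lemma~\ref{fanotypeoveropen} applies to $Y\to Z_1$, giving $qM_{Z_1}$ Weil; Lemma~\ref{sameopen} identifies this with the moduli part induced by the original pair, and since $Z_0$ is smooth one concludes $qM'_Z$ is Cartier. None of this is a ``standard descent'': it requires building a new fibration where the Fano-type hypothesis is restored over an open set of the resolved base.
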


\begin{remark}
{\em The generalized pair $(Z,B_Z+M_Z)$ in Theorem \ref{genbundfor} is induced by $(X,B+M)$ via the generalized canonical bundle formula \cite[Theorem 1.4]{Fil18}.}
\end{remark}

The second result in this direction is an effective version of generalized adjunction to exceptional generalized log canonical centers:

\begin{corollary}\label{genadj}
Let $d$ and $p$ be two natural numbers and $\Lambda \subset \qq$ be a set satisfying the descending chain condition with rational accumulation points. Then, there exist a natural number $q$ and a set $\Omega \subset \qq$ satisfying the descending chain condition with rational accumulation points, only depending on $d,p$ and $\Lambda$, satisfying the following. Let $(X,B+M)$ be a generalized log canonical pair of dimension $d$ such that 
\begin{itemize}
    \item $W\subset X$ is an exceptional generalized log canonical center of $(X,B+M)$;
    \item ${\rm coeff}(B)\subset \Lambda$; and 
    \item $pM'$ is Cartier.
\end{itemize}
Then there exists a generalized pair $(W,B_{W}+M_{W})$ on $W$, so that
\begin{itemize}
    \item $(K_X+B+M)|_{W} \sim_\qq K_{W}+B_{W}+M_{W}$, and this formula is preserved under base change;
    \item ${\rm coeff}(B_{W})\subset \Omega$; and
    \item $qM'_{W}$ is Cartier.
\end{itemize}
\end{corollary}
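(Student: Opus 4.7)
The plan is to deduce Corollary~\ref{genadj} from Theorem~\ref{genbundfor} by realizing generalized adjunction to $W$ as a generalized canonical bundle formula along a Fano-type fibration onto $W$, obtained by extracting the unique generalized lc place over $W$. First, I would take a $\qq$-factorial generalized dlt modification $\pi \colon Y \to X$ of $(X, B + M)$, writing $K_Y + B_Y + M_Y = \pi^*(K_X + B + M)$. By exceptionality of $W$, there is a unique prime divisor $E \subset Y$ which is a generalized lc place of $(X, B + M)$ with center $W$; in particular, $E$ is the only component of $\lfloor B_Y \rfloor$ dominating $W$, and $(Y, B_Y + M_Y - tE)$ is generalized klt in a neighborhood of $E$ for every sufficiently small $t > 0$. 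Generalized divisorial adjunction (as developed in~\cite{BZ16} and~\cite{Fil18}) then yields a generalized lc pair $(E, B_E + M_E)$ with
\[ K_E + B_E + M_E = (K_Y + B_Y + M_Y)|_E = \pi^*(K_X + B + M)|_E, \]
where $\coeff(B_E)$ lies in a DCC set with rational accumulation points depending only on $\Lambda$, and the Cartier index of $M_E'$ is controlled by $d$ and $p$. After Stein factorization, $\pi|_E$ becomes a contraction $f \colon E \to W$; since fibers of $f$ are contracted by $\pi$ to points of $W$, we get $K_E + B_E + M_E \sim_{\qq, W} 0$, and exceptionality forces $(E, B_E + M_E)$ to be generalized klt over the generic point of $W$.

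The remaining hypothesis needed to apply Theorem~\ref{genbundfor} to $f$ is that $E$ is of Fano type over a non-empty open subset of $W$. I would establish this by running a suitable relative MMP on $Y$ over $W$, exploiting the fact that $(X, B + M)$ is generalized klt in a punctured neighborhood of $W$, to replace $Y$ by a small modification where $E \to W$ becomes a Fano-type fibration generically. Applying Theorem~\ref{genbundfor} to $f$ then produces a generalized lc pair $(W, B_W + M_W)$ with $\coeff(B_W) \subset \Omega$ (DCC with rational accumulation points) and $q M_W'$ Cartier, depending only on $d, p$, and $\Lambda$, satisfying $K_E + B_E + M_E \sim_\qq f^*(K_W + B_W + M_W)$ with the formula preserved under base change. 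Pulling back then identifies $(K_X + B + M)|_W$ with $K_W + B_W + M_W$ up to $\qq$-linear equivalence, and the base-change compatibility is inherited from that of the canonical bundle formula applied to $f$.

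The main obstacle is the Fano-type condition for $E \to W$, where the exceptionality hypothesis must be fully exploited; the delicate point is tracking the nef part $M'$ through the MMP and through the extraction of $E$, which is the distinctive feature of the generalized setting and the reason a direct deduction from the classical exceptional-center adjunction is non-trivial.
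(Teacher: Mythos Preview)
Your proposal follows essentially the same route as the paper's proof: take a generalized dlt modification $Y\to X$, identify the unique generalized lc place $E$ over the exceptional center $W$, apply generalized divisorial adjunction to obtain $(E,B_E+M_E)$ with coefficients and moduli Cartier index controlled (the paper does this via Lemma~\ref{coeffdivadj}), observe that exceptionality makes $(E,B_E+M_E)$ generalized klt over the generic point of $W$, and then invoke Theorem~\ref{genbundfor} for the contraction $E\to W$. The paper's proof does precisely this, and in particular simply applies Theorem~\ref{genbundfor} to $E\to W$ without further discussion.

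Where you diverge from the paper is in explicitly isolating the Fano-type hypothesis of Theorem~\ref{genbundfor} as the main obstacle. Your instinct is sound, but the fix you propose---running a relative MMP on $Y$ over $W$ and passing to a small modification so that $E\to W$ becomes a Fano-type fibration generically---does not work as stated. The divisor $E$ is pinned down by the unique generalized lc valuation over $W$, so a small or crepant modification of $Y$ does not alter the birational type of the generic fiber of $E\to W$. Concretely, if $X$ is locally $\mathbb{A}^1$ times the cone over an elliptic curve and $W$ is the line of cone points, then $W$ is an exceptional generalized lc center, the unique lc place $E$ fibers in elliptic curves over $W$, and no MMP on the ambient space will make $E$ of Fano type over any open subset of $W$. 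So the verification you correctly single out as the crux is not resolved by your sketch; the paper's own proof likewise applies Theorem~\ref{genbundfor} without addressing this hypothesis, so on this point you are at least no worse off than the paper, but you should not expect the MMP idea to close the gap.
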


\begin{remark}
{\em The generalized pair $(W,B_{W}+M_{W})$ in Corollary~\ref{genadj} is induced by $(X,B+M)$ via generalized adjunction \cite[Theorem 1.5]{Fil18}.}
\end{remark}

Finally, we discuss the relation between the existence of klt complements and a conjecture due to M$^{\rm c}$Kernan concerning the singularities of the base of a Mori fiber space~\cite[Conjecture 6.2]{Bir17}.

\begin{conjecture}\label{McKernanconj}
Let $d$ be a natural number and $\epsilon$ a positive real number.
Then there exists a positive real number $\delta$ depending on $d$ and $\epsilon$ such that the following holds. If $X\rightarrow Z$ is a Mori fiber space and $X$ is a $\qq$-factorial projective variety of dimension $d$ with $\epsilon$-log canonical singularities, then $Z$ is $\delta$-log canonical.
\end{conjecture}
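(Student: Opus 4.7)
The plan is to derive McKernan's conjecture as a consequence of Conjecture~\ref{encomplement} together with the generalized canonical bundle formula of Theorem~\ref{genbundfor}. First I would check that $(X,0+0)$ satisfies the hypotheses of Conjecture~\ref{encomplement} over $Z$: since $f\colon X\to Z$ is a Mori fiber space with $X$ $\qq$-factorial and $\epsilon$-log canonical, $-K_X$ is $f$-ample and $X$ is of Fano type over $Z$, while the trivial boundary and nef part have coefficients in $\Lambda=\{0\}$ with Cartier index $p=1$. Invoking Conjecture~\ref{encomplement} in the local setting in dimension $d=\dim X$, I obtain $n\in\nn$ and $\delta_1>0$ depending only on $d$ and $\epsilon$, together with, for each $z\in Z$, an effective divisor $\Gamma_z\in|{-}nK_X|$ such that $(X,\Gamma_z/n)$ is $\delta_1$-log canonical over a neighborhood of $z$.

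Next I would pass to $Z$ via the canonical bundle formula. Since $K_X+\Gamma_z/n\sim_{\qq,Z}0$ and $(X,\Gamma_z/n)$ is klt over the generic point of $Z$, Theorem~\ref{genbundfor} yields a generalized pair $(Z,B_Z+M_Z)$ with
\[
K_X+\Gamma_z/n \sim_\qq f^*(K_Z+B_Z+M_Z),
\]
whose generalized log discrepancies at $z$ are controlled by those of $(X,\Gamma_z/n)$ on the fiber above $z$; a standard adjunction argument then shows that $(Z,B_Z+M_Z)$ is generalized $\delta_1$-log canonical at $z$.

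Finally I would compare the generalized log discrepancies of $(Z,B_Z+M_Z)$ with those of $(Z,0)$. On a log resolution $\pi\colon W\to Z$ where the nef part descends to $M_W$, from $K_W+B_W+M_W\sim_\qq \pi^*(K_Z+B_Z+M_Z)$ I can compute, for any divisor $E$ over $Z$,
\[
a_E(Z,0) \;=\; a_E(Z,B_Z+M_Z) + \mult_E(\pi^*B_Z) + \mult_E(\pi^*M_Z - M_W).
\]
Both correction terms are non-negative: $\pi^*B_Z\geq 0$ because $B_Z\geq 0$, and $\pi^*M_Z-M_W$ is effective and $\pi$-exceptional by b-nefness of $\mathbf{M}$. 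This gives $a_E(Z,0)\geq \delta_1$, so $Z$ is $\delta_1$-log canonical at $z$; taking $z$ arbitrary yields McKernan's bound with $\delta=\delta_1$.

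The main obstacle is really the last step: one must verify that $K_Z$ is $\qq$-Cartier at $z$ so that $a_E(Z,0)$ is defined, which can be extracted from the $\qq$-Cartierness of $K_Z+B_Z+M_Z$ together with a direct argument using that the complemented pair $(X,\Gamma_z/n)$ is klt and that $X$ is $\qq$-factorial; moreover, the effectivity of $\pi^*M_Z-M_W$ is the delicate ingredient of the comparison and must be checked by a careful analysis of the b-divisor $\mathbf{M}$, relying on the construction of $M_Z$ as a pushforward from the descent model and the nefness of $M_W$.
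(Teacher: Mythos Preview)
Your argument has a genuine gap in the second step. The assertion that ``a standard adjunction argument then shows that $(Z,B_Z+M_Z)$ is generalized $\delta_1$-log canonical at $z$'' is precisely the hard part, and it is not a consequence of the canonical bundle formula. What the canonical bundle formula gives is the implication from log canonical (resp.\ klt) on $X$ to generalized log canonical (resp.\ generalized klt) on $Z$; it does \emph{not} preserve $\delta$-log canonicity for $\delta>0$. Concretely, for a prime divisor $D$ on a model of $Z$ one has $a_D(Z,B_Z+M_Z)=t_D$, where $t_D$ is the log canonical threshold of $f^*D$ with respect to $(X,\Gamma_z/n)$; if $E$ computes this threshold then $t_D=a_E(X,\Gamma_z/n)/\mathrm{ord}_E(f^*D)$, and while the numerator is $\geq\delta_1$, the denominator is an unbounded positive integer. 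Bounding $t_D$ from below uniformly is essentially Shokurov's conjecture \cite[Conjecture 6.3]{Bir17}, which is at least as strong as the statement you are trying to prove. So your proposal is circular at this point.

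The paper's route is genuinely different and avoids this trap. After producing the strong $(\zeta,n)$-complement $B^+$ over $z$, the paper does not apply the canonical bundle formula to $(X,B^+)$. Instead it observes that the general fiber $(X_t,B^+_t)$ is a $\zeta$-log canonical weak log Fano pair with $\coeff(B^+_t)\subset\{1/n,\ldots,(n-1)/n\}$, hence belongs to a log bounded family depending only on $d$ and $\epsilon$ (via \cite[Theorem 1.1]{Bir16b}). It then invokes Birkar's result \cite[Theorem 1.3]{Bir16}, which is exactly the known case of Shokurov's conjecture under the hypothesis that the general fibers are log bounded; this produces a boundary $\Delta$ on $Z$ with $(Z,\Delta)$ $\delta$-log canonical. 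Finally, $\qq$-factoriality of $Z$ (from \cite[Corollary 3.18]{KM98}) lets one drop $\Delta$. Your comparison step between $(Z,B_Z+M_Z)$ and $(Z,0)$ is fine in isolation (and $Z$ being $\qq$-factorial handles the $\qq$-Cartier issue you flagged), but it is downstream of the unjustified $\delta_1$-lc claim and hence does not rescue the argument.
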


Regarding this conjecture, we prove the following statement.

\begin{theorem}\label{McKernanthm}
Conjecture~\ref{encomplement} implies Conjecture~\ref{McKernanconj}. Moreover, Conjecture~\ref{McKernanconj} holds if we allow $\delta$ to depend on the Cartier index of $\K X.$.
\end{theorem}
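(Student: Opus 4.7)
The plan splits into two independent assertions. For the implication from Conjecture~\ref{encomplement}, let $f\colon X\to Z$ be a Mori fiber space with $X$ $\mathbb{Q}$-factorial projective and $\epsilon$-lc. Since $f$ is a Mori fiber space, $-K_X$ is ample over $Z$, so $X$ is of Fano type over $Z$ and $-(K_X+0+0)$ is nef over $Z$; the generalized pair $(X,0+0)$ with $p=1$ and $\Lambda=\{0\}$ satisfies all the hypotheses of Conjecture~\ref{encomplement}. This yields $n\in\mathbb{N}$ and $\delta_0>0$, depending only on $d=\dim X$ and $\epsilon$, such that for each $z\in Z$ one obtains a strong $(\delta_0,n)$-complement $B^+\geq 0$: namely $n(K_X+B^+)\sim 0$ over $z$ and $(X,B^+)$ is $\delta_0$-lc over $z$. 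Then I would apply the generalized canonical bundle formula \cite[Theorem 1.4]{Fil18} to $(X,B^+)\to Z$ near $z$ to obtain a generalized pair $(Z,B_Z+M_Z)$ with
\[K_X+B^+\sim_\mathbb{Q} f^*(K_Z+B_Z+M_Z).\]
Invoking the standard descent of singularities under the canonical bundle formula, $(Z,B_Z+M_Z)$ is $\delta_0$-lc at $z$; and since $B_Z\geq 0$ and $M_Z$ is the trace of a nef $\mathbb{Q}$-divisor, $Z$ itself is $\delta_0$-lc at $z$. Taking $\delta:=\delta_0$ closes the first half.

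For the unconditional moreover part, let $r$ denote the Cartier index of $K_X$. First apply Theorem~\ref{localconj0n} to $(X,0)$ to produce $n_0\in\mathbb{N}$ and an lc $(0,n_0)$-complement $\Gamma_0$ of $(X,0)$ over $z$. To upgrade the pair to a klt one, I would construct an auxiliary divisor $A$ as follows: since $-rK_X$ is a relatively ample Cartier divisor, for some $N=N(r,d,\epsilon)$ the linear system $|-NrK_X|$ is relatively very ample over $Z$, and by Bertini a general member $A_0$ produces a divisor $A:=A_0/(Nr)\sim_\mathbb{Q} -K_X/z$ such that $(X,A)$ is $\delta_1$-lc for some $\delta_1>0$ depending only on $r,d,\epsilon$. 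Now set
\[B^+:=\tfrac{1}{2}\Gamma_0+\tfrac{1}{2}A.\]
Then $K_X+B^+\sim_\mathbb{Q} 0$ over $z$, and by linearity of log discrepancies in the boundary $(X,B^+)$ is $(\delta_1/2)$-lc over $z$. Applying the canonical bundle formula as in the first half then shows $Z$ is $(\delta_1/2)$-lc at $z$, proving the moreover statement with $\delta:=\delta_1/2$.

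The principal obstacle is the descent of singularities under the generalized canonical bundle formula: one must check that $(Z,B_Z+M_Z)$ inherits the $\delta_0$-lc property of $(X,B^+)$. For $\delta_0=0$ this is essentially the definition of the discriminant via lc thresholds, but for $\delta_0>0$ it requires a base-change argument, extracting an arbitrary divisor $E$ over $Z$ at $z$, lifting it to the total space, and using the compatibility of the canonical bundle formula with base change (as guaranteed in Theorem~\ref{genbundfor}) to identify $a(E;Z,B_Z+M_Z)$ with the lc threshold of a pull-back, which is $\geq\delta_0$. A secondary difficulty, appearing in Part 2, is the production of $A$: one needs an effective relative very-ampleness (or at least base-point-freeness) statement for $-rK_X$ on $\epsilon$-lc varieties with bounds depending only on $r,d,\epsilon$, which is where the dependence of $\delta$ on the Cartier index of $K_X$ enters.
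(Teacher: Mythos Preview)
Your construction of the complement $B^+$ in both parts is fine, but the ``descent of singularities'' step --- passing from $(X,B^+)$ being $\delta_0$-lc to $(Z,B_Z+M_Z)$ being $\delta_0$-lc via the canonical bundle formula --- is a genuine gap, and your proposed base-change argument does not close it. After extracting $E$ on a model $Z'\to Z$ and identifying $a(E;Z,B_Z+M_Z)$ with the lc threshold $t_E$ of $g^*E$ with respect to the log pull-back sub-pair $(X',B')$, you correctly have that $(X',B')$ is $\delta_0$-sub-lc, but this only gives
\[
t_E \;=\; \min_F \frac{a(F;X',B')}{\mult_F g^*E} \;\geq\; \frac{\delta_0}{\max_F \mult_F g^*E},
\]
and the fiber multiplicities $\mult_F g^*E$ are not a priori bounded. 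Bounding them is precisely the content of Shokurov's conjecture \cite[Conjecture 6.3]{Bir17}, which is open in general; what you are invoking as ``standard descent'' is in fact that conjecture.

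The paper proceeds differently: it does \emph{not} use the canonical bundle formula at all here. Once the strong $(\zeta,n)$-complement $B^+$ is in hand, the general fiber $(X_t,B^+_t)$ is $\zeta$-lc log Calabi--Yau with $X_t$ Fano, hence log bounded by \cite[Theorem 1.1]{Bir16b} together with the finiteness of $\coeff(B^+)$. One then feeds this log boundedness into Birkar's theorem \cite[Theorem 1.3]{Bir16} --- a proven case of Shokurov's conjecture --- to produce a boundary $\Delta$ on $Z$ with $(Z,\Delta)$ $\delta$-lc, and concludes using $\qq$-factoriality of $Z$. Your argument produces the same log bounded general fiber but never uses it; inserting \cite[Theorem 1.3]{Bir16} in place of the direct descent would repair the proof. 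For the moreover part, the paper obtains the complement more directly (effective base-point-freeness for the Cartier divisor $-m\K X.$ via \cite[Theorem 2.2.4]{Fuj09} gives a strong $(\epsilon,a)$-complement in one step, with no need for your averaging with an lc complement), and then applies the same \cite{Bir16} argument; your Part~2 inherits the identical gap at the descent step.
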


The main technique involved in the proof of the above proposition is a known case of a conjecture of Shokurov~\cite[Conjecture 6.3]{Bir17}
due to Birkar~\cite{Bir16}.

\subsection*{Acknowledgements} 
The authors would like to thank Christopher Hacon and Karl Schwede for many useful comments.
They would also like to thank an anonymous referee for the careful report and several suggestions.
The first named author was supported by the Graduate Research Fellowship awarded by the University of Utah. The two authors were partially supported by NSF research grants no: DMS-1801851, DMS-1265285 and by a
grant from the
Simons Foundation; Award Number: 256202.
\section{Preliminary results}

Throughout this paper, we work over an algebraically closed field $k$ of characteristic zero.
All varieties considered in this paper are normal unless otherwise stated.
In this section, we will collect some definitions and preliminary results which will be used in this article.

\subsection{Contractions} In this paper a {\em contraction} is a projective morphism of quasi-projective varieties $f  \colon  X \rar Z$ with $f_* \O X. = \O Z.$. Notice that, if $X$ is normal, then so is $Z$.

\subsection{Divisors and linear series} Let $X$ be a normal quasi-projective variety. We say that $D$ is a divisor on $X$ if it is a $\qq$-Weil divisor, i.e., $D$ is a finite sum of prime divisors on $X$ with coefficients in $\qq$. Let $f  \colon  X \rar Z$ be a projective morphism of quasi-projective varieties. Let $D_1$ and $D_2$ be divisors on $X$. We write $D_1 \sim_Z D_2$ (respectively $D_1 \sim \subs \qq,Z . D_2$) if there is a Cartier (respectively $\qq$-Cartier) divisor $L$ on $Z$ such that $D_1 - D_2 \sim f^*L$ (respectively $D_1 - D_2 \sim_\qq f^*L$). Equivalently, we may also write $D_1 \sim D_2$ over $Z$. The case of $\qq$-linear equivalence is also denoted similarly. Let $z$ be a point in $Z$. We write $D_1 \sim D_2$ over $z$ if $D_1 \sim_{Z} D_2$ holds after possibly shrinking $Z$ around $z$. We also make use of the analogous notion for $\qq$-linear equivalence.

\begin{definition} \label{general relative element} {\em 
Let $f \colon  X \rar Z$ be a projective morphism of quasi-projective varieties and $D$ an integral Weil divisor on $X$. Fix a (not necessarily closed) point $z \in Z$. We want to define an appropriate notion of \emph{general element of $|\O X.(D)|$ over $z$}. Without loss of generality, we may take projective closures of $Z$ and $X$, and assume that they are projective varieties. Let $U = \Spec(A) \subset Z$ be an affine open subset containing $z$, and write $X_U \coloneqq f \sups -1. (U)$. Let $H$ be an ample and effective Cartier divisor supported on $Z \setminus U$. In particular, we have $H|_U \sim 0$.

Now, $H^0 (X_U,\O X_U. (D|_U))$ is a finitely generated $A$-module \cite[Theorem II.5.19]{Har77}. Therefore, for any section $s \in H^0 (X_U,\O X_U. (D|\subs X_U.))$ there exists $n \in \nn$ such that $s \in H^0 (X,\O X. (D+nf^*H))$. Thus, we have
$$
H^0 (X_U,\O X_U. (D|_U)) = \bigcup \subs n \in \nn. H^0 (X,\O X. (D+nf^*H)).
$$
Hence, when we refer to a general element of $|\O X.(D)|$ over $z$, we mean a general element of the linear system $|\O X. (D+nf^*H)|$ for $n \gg 0$.}
\end{definition}

\subsection{Generalized pairs and singularities}

In this subsection, we recall the definition of generalized pairs (see, e.g.,~\cite{BZ16}),
which is a generalization of the classic setting of log pairs (see, e.g.,~\cite{KM98}).
We will prove some basic properties regarding the singularities of generalized pairs.

\begin{definition}
{\em A {\em generalized sub-pair} is a triple $(X,B+M)$, where $X$ is a normal quasi-projective variety, 
$K_X+B+M$ is a $\qq$-Cartier divisor, and $M$ is the push-forward of a nef divisor on a higher model of $X$.
More precisely, there exist a projective birational morphism $\pi\colon X'\rightarrow X$ and a nef $\qq$-Cartier divisor $M'$ on $X'$ so that $\pi_*(M')=M$. 
If $B$ is an effective divisor, we will say that $(X,B+M)$ is a {\em generalized pair}. 

In the above situation, we usually call $B$ the {\em boundary part}, $M$ the {\em moduli part} and $B+M$ a {\em generalized boundary}.
We may say that $(X,B+M)$ is a generalized pair with data $X'$ and $M'$.
The divisor $M'$ induces a birational $\qq$-Cartier divisor which descends on $X'$ in the sense of~\cite[Secion 1.7]{Cor07}. 
Hence, we can always replace $X'$ with a higher birational model and $M'$ with its pull-back 
without changing the generalized pair.

Replacing $X'$ with a higher birational model, we may assume that the exceptional locus of $\pi$ is purely of codimension one,
and that the sum of the strict transform of $B$ on $X'$ and the reduced exceptional divisor of $\pi$
is a divisor with simple normal crossing support.
Under these assumptions, we may write
\[
K_{X'}+B'+M'=\pi^*(K_X+B+M), 
\]
where $B'$ is the sum of the strict transform of $B$ and a divisor with support in the exceptional locus of $\pi$.
Given $\epsilon \geq 0$, we will say that the generalized pair $(X,B+M)$ has \emph{generalized $\epsilon$-log canonical singularities} 
if the coefficients of $B'$ are less than or equal to $1-\epsilon$.
If $\epsilon=0$, we omit it from the notation.
}
\end{definition}

\begin{definition}{\em 
Let $(X,B+M)$ be a generalized pair and $\pi \colon Y\rightarrow X$
a projective birational morphism
factoring through $X'$. Then, we may write
\[
K_Y+B_Y+M_Y=\pi^*(K_X+B+M),
\]
where $M_Y$ is the pull-back of $M'$ to $Y$.
Given a prime divisor $E$ on $Y$, we define the {\em generalized log discrepancy of} $(X,B+M)$
with respect to $E$ to be 
\[
a_E(X,B+M) \coloneqq 1-{\rm coeff}_{E}(B_Y).
\]
If the generalized discrepancy of $(X,B+M)$ at $E$ is $\epsilon$, we say that 
$E$ is an {\em $\epsilon$-log canonical place} for the generalized pair, and 
$\pi(E)\subset X$ is an {\em $\epsilon$-log canonical center} for the generalized pair.
If $\epsilon=0$, we omit it from the notation.
If $a_E(X,B+M) \leq 0$, we say that $E$ is a {\em generalized non-klt place} for $(X,B+M)$, and $\pi(E)$ is a {\em generalized non-klt center} for $(X,B+M)$. A generalized log canonical center is said to be \emph{exceptional} if it admits a unique generalized log canonical place and is disjoint from the image of any other non-klt place.}
\end{definition}

\begin{definition}{\em
We say that $(X,B+M)$ is {\em generalized dlt} if $(X,B)$ is dlt and every generalized non-klt center of $(X,B+M)$ is a non-klt center of $(X,B)$. If, in addition, every connected component of $\lfloor B \rfloor$ is irreducible, we say that $(X,B+M)$ is {\em generalized plt}.}
\end{definition}

\begin{definition}
{\em Let $(X,B+M)$ be a generalized pair,
$P$ an effective divisor, and $N$ the push-forward of a nef divisor from a possibly higher birational model of $X$.
Notice that, up to replacing $X'$ with a higher model, we may assume that $M$ and $N$ are the push-forwards of two nef divisors on $X'$.
Assume that the divisor $P+N$ is $\qq$-Cartier.
We define the {\em generalized log canonical threshold} of $K_X+B+M$ with respect to $P+N$ to be
\[
{\rm glct}(K_X+B+M\mid P+N) \coloneqq \sup\{t \mid K_X+B+M+t(P+N)\text{ is generalized log canonical} \},
\]
where $(X,B+M+t(P+N))$ is considered as a generalized pair with boundary part $B+tP$
and moduli part $M+tN$. 
If the above set is empty, then we define the generalized log canonical threshold to be $-\infty$.
Observe that ${\rm glct}(K_X+B+M\mid P+N)$ is non-negative provided that $K_X+B+M$ is generalized log canonical.
Moreover, ${\rm glct}(K_X+B+M\mid P+N)$ is infinite if and only if $N$ descends onto $X$ and $P$ is trivial.
}
\end{definition}

\begin{remark}
{\em 
Given a natural number $p$, 
a set $\Lambda$ of rational numbers satisfying the descending chain condition
and a normal quasi-projective variety $X$,
we denote by $\mathcal{G}\mathcal{B}(X)_{p,\Lambda}$ the set of generalized boundaries $B+M$
on $X$ so that $pM'$ is Cartier, ${\rm coeff}(B)\subset \Lambda$, and $B+M$ is $\qq$-Cartier. 
In~\cite[Theorem 1.5]{BZ16}, Birkar and Zhang proved a version of the ascending chain condition for
generalized log canonical thresholds.
In particular,~\cite[Theorem 1.5]{BZ16} implies that the set 
\[
\mathcal{G}{\rm lct}_{d,p,\Lambda}\coloneqq
\{
{\rm glct}(K_X+B+M\mid P+N) \mid B+M\in \mathcal{G}\mathcal{B}(X)_{p,\Lambda}, P+N \in \mathcal{G}\mathcal{B}(X)_{p,\Lambda},\text{and} \dim X =d
\}
\]
satisfies the ascending chain condition.
This is the ACC for generalized log canonical thresholds that we will use in this article.
}
\end{remark}

\subsection{Bounded families of generalized pairs}

In this subsection, we recall the concept of bounded families of pairs
and introduce the concept of bounded families of generalized pairs.

\begin{definition}\label{def gen lob bdd}{\em
A {\em couple} $(X,D)$ is the datum of a normal projective variety $X$ and a divisor $D$ on $X$ whose coefficients are all equal to one.
A set of couples $\mathcal{Q}$ is said to be {\em log bounded} if there exist 
finitely many projective morphisms $\mathcal{X}^i\rightarrow T^i$ of varieties and reduced divisors $\mathcal{B}^i$ on $\mathcal{X}^i$
so that for every couple $(X,D) \in \mathcal{Q}$ there exist an $i$, a closed point $t\in T^i$
and an isomorphism $\phi \colon \mathcal{X}^i_t \rightarrow X$ so that $(\mathcal{X}^i_t, \mathcal{B}^i_t)$ is a couple
and $\phi^{-1}_*(D) \leq \mathcal{B}^i_t$.
In what follows, we may omit $i$ if it does not play a role in the argument. 
We say that $\mathcal{X} \rightarrow T$ is a {\em bounding family} for $\mathcal{Q}$
and $\mathcal{B}\subset \mathcal{X}$ is a bounding divisor for the set of divisors $\{D \mid (X,D)\in \mathcal{Q}\}$.

A set $\mathcal{P}$ of generalized pairs is said to be {\em generalized log bounded} if there exists 
a log bounded set of couples $\mathcal{Q}$ so that for each $(X,B+M)\in \mathcal{P}$ we can write
$M\sim_\qq \Delta_1 - \Delta_2$, where $\Delta_1$ and $\Delta_2$ are effective $\qq$-divisors,
and $(X,\Supp(B+\Delta_1+\Delta_2))\in \mathcal{Q}$.
If a set of generalized pairs $\mathcal{P}$ is generalized log bounded 
and $M'=0$ for every $(X,B+M)\in \mathcal{P}$, we just say it is {\em log bounded}.
Moreover, if $B=0$ for every $(X,B)\in \mathcal{P}$, we say that $\mathcal{P}$ is {\em bounded}.
}
\end{definition}

\begin{lemma} \label{generalized BAB}
Let $d$ be a natural number and $\epsilon$ be a positive real number. Then the projective varieties $X$ such that
\begin{itemize}
\item $(X,B+M)$ is a generalized $\epsilon$-log canonical pair of dimension $d$ for some $B$ and $M'$; and
\item $-(K_X + B + M)$ is nef and big
\end{itemize}
form a bounded family.
\end{lemma}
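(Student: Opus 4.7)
The plan is to reduce to Birkar's boundedness theorem for $\epsilon$-log canonical weak Fano varieties \cite[Theorem 1.1]{Bir16b}. Specifically, I would show that the classical pair $(X,B)$, obtained by forgetting the moduli part of the given generalized pair, is itself $\epsilon$-log canonical with $-(K_X+B)$ nef and big, and then invoke Birkar's theorem.

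Without loss of generality, we may assume $X$ is $\qq$-factorial after passing to a small $\qq$-factorialization: this preserves all hypotheses, makes $K_X$, $B$ and $M$ individually $\qq$-Cartier, and does not affect whether $X$ lies in a bounded family. Let $\pi\colon X'\to X$ be a log resolution on which $M$ descends as $M'$. Applying the negativity lemma to the $\pi$-exceptional divisor $\pi^*M - M'$---which is $\pi$-antinef because $\pi^*M$ is $\pi$-trivial while the globally nef $M'$ is in particular $\pi$-nef---one concludes $\pi^*M \geq M'$. Setting $K_{X'} + \tilde{B} = \pi^*(K_X+B)$ and subtracting this from $K_{X'}+B'+M' = \pi^*(K_X+B+M)$ yields $\tilde{B} = B' - (\pi^*M - M') \leq B'$, so the coefficients of $\tilde{B}$ are bounded above by $1-\epsilon$; hence $(X,B)$ is classically $\epsilon$-log canonical. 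Furthermore, $-(K_X+B) = -(K_X+B+M) + M$ is big, since $-(K_X+B+M)$ is big and $M = \pi_*M'$ is pseudo-effective.

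For nefness of $-(K_X+B)$, I would argue that $M$ itself is nef on $X$: writing $\pi^*M = M' + (\pi^*M - M')$ as the sum of a globally nef divisor and an effective exceptional divisor, and using that $\pi(\operatorname{Exc}(\pi))$ has codimension at least two in $X$, one can check $M\cdot C\geq 0$ for curves $C\subset X$ whose strict transforms avoid $\operatorname{Exc}(\pi)$ by the projection formula, and then extend by a density argument. Granted this, $-(K_X+B)$ is nef and big, and \cite[Theorem 1.1]{Bir16b} applied to the classical pair $(X,B)$ concludes that $X$ lies in a bounded family. The main technical point is the nefness of $M$ on curves contained in the codimension-$\geq 2$ locus $\pi(\operatorname{Exc}(\pi))$, where the direct projection-formula argument breaks down and one needs a continuity or extremal-ray argument; the $\qq$-factoriality of $X$, secured at the outset, is essential for this step.
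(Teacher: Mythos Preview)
Your reduction to \cite[Theorem 1.1]{Bir16b} via the classical pair $(X,B)$ contains a genuine gap: the push-forward $M=\pi_*M'$ of a nef divisor need not be nef, and no density or continuity argument can repair this. Concretely, let $X$ be a smooth projective threefold admitting a flop $X\dashrightarrow X^+$ with flopping curve $C\subset X$, let $X'$ be a common resolution with morphisms $p\colon X'\to X$ and $q\colon X'\to X^+$, and set $M'\coloneqq q^*H^+$ for $H^+$ ample on $X^+$. Then $M'$ is nef on $X'$, and $M=p_*M'$ is the strict transform of $H^+$ under the flop, so $M\cdot C<0$. Here $C$ lies entirely in $p(\operatorname{Exc}(p))$, and the intersection number is genuinely negative, not a limit of non-negative ones; this is exactly the phenomenon that forces one to record $M'$ on a higher model in the definition of a generalized pair. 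Your preliminary steps are fine: $(X,B)$ is indeed $\epsilon$-log canonical via the negativity lemma, and $-(K_X+B)$ is big since $M$ is pseudo-effective. But bigness alone does not let you invoke Birkar's theorem.

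The paper avoids this obstruction by never asserting that $M$ is nef on $X$. Instead it works on the resolution $X'$, writes
\[
(1-\alpha)(K_{X'}+B'+M')=K_{X'}+B'+\bigl(M'-\alpha(K_{X'}+B'+M')\bigr),
\]
and observes that $M'-\alpha(K_{X'}+B'+M')$ is nef and big (as a sum of nef divisors, one of which is big). It then uses Kodaira's lemma to write this as $A'_k+E'_k$ with $A'_k$ ample and $E'_k$ effective with small coefficients, so that $(X',B'+A'_k+E'_k)$ is $\frac{\epsilon}{2}$-sub-log canonical for general $A'_k$ and $k\gg 0$. Pushing forward yields an honest $\frac{\epsilon}{2}$-log canonical pair $(X,B+A_k+E_k)$ with $-(K_X+B+A_k+E_k)\sim_\qq -(1-\alpha)(K_X+B+M)$ nef and big, to which \cite[Theorem 1.1]{Bir16b} applies. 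The key idea you are missing is to trade the nef part $M'$ for an effective boundary contribution by borrowing a small multiple of the anti-log-canonical divisor.
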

\begin{proof}
Let $(X,B+M)$ be as in the statement, and let $\pi  \colon X' \rightarrow X$ be a higher birational model where $M'$ descends. Then, we have that $(X',B')$ is an $\epsilon$-log canonical sub-pair. Fix a rational number $0< \alpha < 1$. Then, the divisor
$$
(1-\alpha)(K_{X'}+B'+M') = K_{X'}+B' + (M'-\alpha(K_{X'}+B'+M'))
$$
is anti-nef and anti-big. Since $-(K_{X'}+ B' + M')$ is nef and big and $M'$ is nef, $M'-\alpha(K_{X'}+B'+M')$ is nef and big. Therefore, there exists an effective $\mathbb{Q}$-divisor $E'$ such that
$$
M'-\alpha(K_{X'}+B'+M') \sim_\mathbb{Q} A'_k + E'_k
$$
for all positive integers $k$, where $A'_k$ is an ample $\mathbb{Q}$-divisor and $E'_k \coloneqq \frac{E'}{k}$ \cite[Example 2.2.19]{Laz04a}. Thus, we may write
$$
(1-\alpha)(K_{X'}+B'+M') \sim_\mathbb{Q} K_{X'}+ B' + A'_k + E'_k.
$$
If we choose $A'_k$ generically in its $\mathbb{Q}$-linear equivalence class and $k \gg 1$, the sub-pair $(X',B'+A'_k+E'_k)$ is $\frac{\epsilon}{2}$-log canonical. Fix such choices. Define $A_k \coloneqq \mu_* A'_k$ and $E_k \coloneqq \mu_*E'_k$. Then, the sub-pair $(X,B+A_k+E_k)$ is an $\frac{\epsilon}{2}$-log canonical pair. Indeed, by construction $B+A_k+E_k$ is effective, and the sub-pair $(X',B'+A'_k+E'_k)$ is the crepant pull-back of $(X,B+A_k+E_k)$. Then, we conclude that
$$
(1-\alpha) (K_X + B  + M) \sim_\mathbb{Q} K_X + B + A_k + E_k.
$$
Thus, we have that the pair $(X,B+A_k+E_k)$ is weak log Fano and $\epsilon/2$-log canonical. 
By \cite[Theorem 1.1]{Bir16b}, the $X$ as in the statement belongs to a bounded family.
\end{proof}

The following is a consequence of the proof of \cite[Proposition 2.4]{HX15}. We include a proof for the reader's convenience.

\begin{proposition} \label{prop Q-factorial family}
Let $\lbrace (X_i,B_i) \rbrace \subs i \geq 1.$ be a sequence of $\epsilon$-log canonical $\qq$-factorial pairs, where $\epsilon > 0$. Assume that there exist a projective morphism $\pi  \colon  \mathcal{X} \rar T$ to a variety of finite type, a divisor $\mathcal{B}$ on $\mathcal{X}$ and a dense sequence of closed points $t_i$ on $T$ so that
$(X_i,B_i) \cong (\mathcal{X}_i,\mathcal{B}_i)$ as pairs, where $\mathcal{X}_i \coloneqq \mathcal{X} \subs t_i.$, and $\mathcal{B}_i \coloneqq \mathcal{B}| \subs \mathcal{X}_{t_i}.$.
Then, there exist a birational morphism $f  \colon  \mathcal{X}' \rar \mathcal{X}$, a divisor $\mathcal{B}'$ on $\mathcal{X}'$ and a dense open set $U \subset T$ such that:
\begin{itemize}
\item $(\mathcal{X}',\mathcal{B}')$ is a $\qq$-factorial klt pair, with $f_* \mathcal{B}'= \mathcal{B}$; and
\item $f$ is small over $U$.
\end{itemize}
In particular, we have $(\mathcal{X}' \subs i_k.,\mathcal{B}' \subs i_k.) \cong (X \subs i_k., B \subs i_k.)$ for a dense subsequence $t_{i_k}$ in $T$.
\end{proposition}

\begin{proof}
We follow the proof of \cite[Proposition 2.4]{HX15}. Fix a rational number $0 < \epsilon' < \epsilon$. Up to shrinking $T$, there is a log resolution $g  \colon  \mathcal{Y} \rar \mathcal{X}$ of $(\mathcal{X},\mathcal{B})$ such that $(\mathcal{Y},g_* \sups -1. \mathcal{B} + \mathcal{E})$ is log smooth over $T$, where $\mathcal{E}$ denotes the reduced exceptional divisor for $g$. We run a $(\K \mathcal{Y}. + g_* \sups -1. \mathcal{B} + (1-\epsilon') \mathcal{E})$-minimal model program over $\mathcal{X}$. By \cite{BCHM}, it terminates with a minimal model $f  \colon  \mathcal{X}' \rar \mathcal{X}$. Let $h  \colon  \mathcal{Y} \drar \mathcal{X}'$ denote the induced birational map.

Observe that the divisor
\[
\K \mathcal{X}'. + h_*(g_* \sups -1. \mathcal{B} + (1-\epsilon') \mathcal{E}))
\]
is nef over $\mathcal{X}$, and so is 
\[
(\K \mathcal{X}'. + h_*(g_* \sups -1. \mathcal{B} + (1-\epsilon') \mathcal{E})))| \subs \mathcal{X}'_i.
\]
over $\mathcal{X}_i$. Let $f_i  \colon  \mathcal{X}'_i \rar \mathcal{X}_i$ denote the corresponding birational morphism. Since 
\[
(\K \mathcal{X}'. + h_*(g_* \sups -1. \mathcal{B} + (1-\epsilon') \mathcal{E}))| \subs \mathcal{X}'_i. - f_i^* (\K \mathcal{X}_i. + \mathcal{B}_i)
\]
is effective, $f_i$-nef and supported on all the $f_i$-exceptional divisors, by the negativity lemma~\cite[Lemma 3.39]{KM98} it follows that $f_i$ is a small birational morphism \cite[Lemma 3.39]{KM98}. Since $\mathcal{X}_i$ is $\qq$-factorial, then $f_i$ is the identity morphism \cite[Corollary 2.63]{KM98}.

Since all the restrictions $f_i$ are small, no $f$-exceptional divisor dominates $T$. Therefore, up to shrinking $T$, we may assume that $f$ is small. Thus, we have that 
\[
f_* \sups -1. \mathcal{B}= h_*(g_* \sups -1. \mathcal{B} + (1-\epsilon') \mathcal{E})).
\]
Set $\mathcal{B}' \coloneqq f_* \sups -1. \mathcal{B}$. Then, by construction $(\mathcal{X}',\mathcal{B}')$ is a $\qq$-factorial klt pair.
\end{proof}

\subsection{Theory of complements}

In this subsection, we give the basic definitions related to the theory of complements.
We will start by defining varieties of relative Fano-type, which is the class of varieties of interest for this work.

\begin{definition}\label{FT}{\em 
Let $X\rightarrow Z$ be a projective morphism between quasi-projective varieties.
We say that $X$ is of {\em Fano-type} over $Z$ if there exists a boundary $B$ on $X$ such that
$(X,B)$ is a klt pair and $-(K_X+B)$ is nef and big over $Z$.
It is known that if $X$ is of Fano-type over $Z$, then any minimal model program over $Z$ for a divisor $D$ on $X$
terminates~\cite[Corollary 1.3.2]{BCHM}.
}
\end{definition}

\begin{definition}\label{comp}{\em
Let $(X,B+M)$ be an $\epsilon$-log canonical generalized pair and $X\rightarrow Z$ a contraction of normal quasi-projective varieties. 
We say that the divisor $B^+$ is a {\em $(\delta,n)$-complement} over $z\in Z$ if the following conditions hold over some neighborhood of $z$:
\begin{itemize}
\item $(X,B^{+}+M)$ is a $\delta$-log canonical generalized pair with boundary part $B^{+}$;
\item $n(K_X+B^{+}+M)\sim 0$; and 
\item $nB^{+}\geq n\lfloor B \rfloor + \lfloor (n+1)\left\{ B \right\} \rfloor$.
\end{itemize}
If $nB^+\geq nB$, then we say that $B^+$ is a {\em strong $(\delta,n)$-complement}.
}
\end{definition}

\begin{remark} \label{rmk_klt_complement}
{\em
Let $(X,B+M)$ be a generalized klt pair and $X\rightarrow Z$ be a contraction of normal quasi-projective varieties. 
Let $B^+$ be a $(0,n)$-complement over $z\in Z$.
Then, if $(X,B^+ +M)$ is generalized klt over a neighborhood of $z \in Z$, $B^+$ is a $\left( \frac{1}{n},n \right)$-complement over $z \in Z$.
Indeed, by definition, $n(\K X. + B^+ + M)$ is Cartier over a neighborhood of $z \in Z$.
Thus, the generalized log discrepancies of $(X,B^+ + M)$ are integer multiples of $\frac{1}{n}$.
Since $(X,B^+ + M)$ is klt, its generalized log discrepancies are at least $\frac{1}{n}$.
}
\end{remark}

\begin{lemma} \label{lemma strong complements finite coeff}
Let $d$ and $p$ be natural numbers, $\epsilon \in [0,1)$, and $\Lambda \subset [0,1]$ a finite set of rational numbers. Then there is a natural number $n$ depending only on $d$, $p$, $\epsilon$ and $\Lambda$ such that the following holds. Assume that $(X,B+M)$ is a projective generalized $\epsilon$-log canonical pair of dimension $d$ such that:
\begin{itemize}
\item $X$ is of Fano-type;
\item $\coeff(B) \subset \Lambda$;
\item $pM$ is an integral divisor; and
\item $-(\K X. + B + M)$ is nef.
\end{itemize}
Then there is a global strong $(\epsilon,n)$-complement for $(X,B+M)$.
\end{lemma}

\begin{proof}
By \cite[Theorem 1.10]{Bir16a}, we may assume that $\epsilon > 0$.
Under this assumption, we claim that the varieties $X$ as in the statement form a bounded family.
Indeed, since $X$ is of Fano-type we can find a boundary divisor $\Gamma$ on $X$ so that 
$(X,\Gamma)$ is klt and $-(K_X+\Gamma)$ is nef and big. Therefore, we conclude that the generalized pair
\[
K_X+\frac{B+\Gamma}{2}+\frac{M}{2}
\]
is $\frac{\epsilon}{2}$-generalized log canonical 
and 
\[
-\left(K_X+\frac{B+\Gamma}{2}+\frac{M}{2} \right)
\]
is nef and big. Hence, by Lemma~\ref{generalized BAB} we know that the varieties $X$ belong to a bounded family.

Therefore, for any such $X$, there is a very ample Cartier divisor $A$ such that $A^d \leq r$ and $A \sups d-1. \cdot(-\K X.) \leq r$ for some fixed number $r$. Since $B \geq 0$ and $M$ is the push-forward of a nef divisor, it follows that $A \sups d-1. \cdot (-\K X. - B - M) \leq r$. Since $\K X. + B + M$ has bounded Weil index $c$ depending just on $\Lambda$ and $p$, up to replacing $r$ by $cr$, we may also assume that $A \sups d-1. \cdot (-c\K X. - cB - cM) \leq r$.

By \cite[Lemma 2.25]{Bir16a}, $\K X. + B + M$ has bounded Cartier index, which we will denote by $a$.
Thus, we can apply the effective basepoint-free theorem~\cite[Theorem 1.1]{Kol93} for the Cartier divisor $a(K_X+B+M)$
on the klt pair $(X,\Gamma)$, since the $\qq$-Cartier divisor 
\[
-a(K_X+B+M) - (K_X+\Gamma)
\]
is nef and big.
Thus, there is a uniform positive integer $n$, divisible by $a$, such that $|-n(\K X. + B + M)|$ is basepoint-free. Without loss of generality, we may assume that $n \geq (1-\epsilon) \sups -1.$.

Now, let $D$ be a general element of $|-n(\K X. + B + M)|$. Define $B^+ \coloneqq B + \frac{D}{n}$. Since $|-n(\K X. + B + M)|$ is free, $(X,B+M)$ is generalized $\epsilon$-lc and $n \geq (1-\epsilon) \sups -1.$, it follows that $(X,B^++M)$ is generalized $\epsilon$-lc. By construction, $\K X. + B^+ + M$ is a global $(\epsilon,n)$-complement for $\K X. + B + M$. As $B^+ \geq B$, it is automatically a strong complement. 
\end{proof}

\begin{proposition} \label{replace complement in linear series}
Let $\Lambda \subset [0,1]$ be a finite set of rational numbers, $p$ and $n$ a positive integers and $\epsilon \geq 0$ a rational number. Let $\mathcal{P}$ be a set of generalized pairs $(X,B+M)$ such that
\begin{itemize}
\item $\coeff(B) \subset \Lambda$, and $pM'$ is Cartier;
\item there is a contraction $X \rar Z$, with $X$ of Fano-type over $Z$; and
\item $(X,B+M)$ admits a strong $(\epsilon,n)$-complement over any point $z \in Z$.
\end{itemize}
Then, for any sufficiently divisible positive integer $m$, depending on $\mathcal{P}$, in the above setup any $(X,B+M) \in \mathcal{P}$ admits a strong $(\epsilon,mn)$-complement $B^+$ with 
\[
mn(B^+-B) \in |-mn(\K X. + B + M)|
\]
a general element of the linear system.
\end{proposition}
\begin{proof}
Let $m$ be a positive integer such that $m \Lambda \subset \nn$ and $p|m$. Fix $(X,B+M) \in \mathcal{P}$, and let $B^+$ be a strong $(\epsilon,n)$-complement as in the statement. Since $B^+ \geq B$, it is automatically a strong $(\epsilon,mn)$-complement. We may assume that $Z$ is affine.
Therefore, we have $mn(B^+-B) \in |-mn(\K X. + B + M)|$. In particular, $|-mn(\K X. + B + M)| \neq \emptyset$. Fix $E \in |-mn(\K X. + B + M)|$. Then, we have
\begin{equation} \label{equation linear equivalence}
mn(\K X. + B^+ + M) \sim mn\left( \K X. + B + \frac{E}{mn} + M\right).
\end{equation}
In particular, the right hand side of \eqref{equation linear equivalence} is Cartier.
By Definition \ref{general relative element}, we can regard the element $mn(B^+-B)$ of $|-mn(\K X. + B + M)|$ over $z$ as an element of a linear series on a projective closure of $X$. Thus, by \cite[Corollary 2.33]{KM98}, for a general choice of $E$ in the same linear series, the singularities of $(X,B+\frac{E}{mn}+M)$ are not worse than the ones of $(X,B^+ + M)$. Hence, the claim follows.
\end{proof}

\subsection{Examples of complements}

In this subsection, we give some examples of complements.
In particular, we show that the conditions of Conjecture~\ref{encomplement} are necessary for the existence of strong complements. 

\begin{example}{ \em
Let $\lbrace \alpha_i \rbrace \subs i \geq 1.$ be a strictly increasing sequence of rational numbers with $\lim \subs i \to +\infty. \alpha_i = \frac{\sqrt{2}}{2}$. Similarly, let $\lbrace \beta_i \rbrace \subs i \geq 1.$ be a strictly increasing sequence of rational numbers with $\lim \subs i \to +\infty. \beta_i =1 - \frac {\sqrt{2}}{2}$. Define $\Lambda \coloneqq \lbrace \alpha_i \rbrace \subs i \geq 1. \cup \lbrace \beta_i \rbrace \subs i \geq 1.$. Then, $\Lambda$ is a set of rational numbers satisfying the descending chain condition. Notice that the accumulation points are not rational.

Fix four distinct closed points $P,Q,R,S \in \pr 1.$. Consider the sequence of boundaries $\Delta_i \coloneqq \alpha_i P + \alpha_i Q + \beta_i R + \beta_i S$. Then, $(\pr 1.,\Delta_i)$ is klt and $-(\K {\pr 1.}.+\Delta_i)$ is ample. We will show that $(\pr 1.,\Delta_i)$ does not admit a bounded strong log canonical complement.

Fix $n \in \nn$. Then, for $i \gg 1$ we have $\frac{\lceil n \alpha_i \rceil}{n} > \frac{\sqrt{2}}{2}$. Similarly, we have $\frac{\lceil n \beta_i \rceil}{n} > 1- \frac{\sqrt{2}}{2}$. Therefore, there exists no $\Gamma \geq \Delta_i$ such that $n\Gamma$ in integral and $\deg \Gamma =2$. In particular, there exists no strong $(0,n)$-complement for $(\pr 1., \Delta_i)$.
}
\end{example}

\begin{example}{\em
Define $\Lambda \coloneqq \lbrace 1-\frac{1}{n}| n \in \nn,n \geq 2 \rbrace \cup \lbrace \frac{\sqrt{2}}{2},1-\frac{\sqrt{2}}{2}  \rbrace$. Notice that $\Lambda$ is a set satisfying the descending chain condition, and that all of the accumulation points are rational.

Fix three distinct closed points $P,Q,R \in \pr 1.$, and define boundaries $\Delta_i \coloneqq \frac{i-1}{i}P+\frac{\sqrt{2}}{2}Q+ (1-\frac{\sqrt{2}}{2})R$. Then, $(\pr 1.,\Delta_i)$ is klt and $-(\K {\pr 1.}.+\Delta_i)$ is ample. We will show that $(\pr 1.,\Delta_i)$ does not admit a bounded strong log canonical complements.

Fix $n \in \nn$. Then, for $i > n$ we have $\frac{\lceil n \frac{i-1}{i} \rceil}{n} = 1$. Therefore, any divisor $\Gamma \geq \Delta_i$ with $n\Gamma$ integral satisfies $\Gamma > P+\frac{\sqrt{2}}{2}Q+ (1-\frac{\sqrt{2}}{2})R$. In particular, we have $\deg \Gamma > 2$.
Hence, there exists no strong $(0,n)$-complement for $(\pr 1., \Delta_i)$.
}
\end{example}

\begin{example}{\em
Define $\Lambda \coloneqq \lbrace \frac{1}{n}|n\in \nn, n \geq 1 \rbrace$. Fix a sequence of distinct closed points $\lbrace P_j \rbrace \subs j \geq 1. \subset \pr 1.$. Define boundaries $\Delta_i \coloneqq \frac{1}{i} \sum \subs j=1. \sups i-1. P_j$. Then, $(\pr 1.,\Delta_i)$ is klt, and $-(\K {\pr 1.}. + \Delta_i)$ is ample.

Fix $n \in \nn$. Then, for $i \gg 1$ we have $\deg \frac{\lceil n \Delta_i \rceil}{n}>2$. Hence, there exists no strong $(0,n)$-complement for $(\pr 1., \Delta_i)$.
}
\end{example}

\begin{example}{\em
Define $\Lambda \coloneqq \lbrace 1-\frac{\sqrt{2}}{2},\frac{\sqrt{2}}{2}\rbrace$. Fix four distinct closed points $P,Q,R,S \in \pr 1.$. Then, set $\Delta \coloneqq (1-\frac{\sqrt{2}}{2})(P+R) + \frac{\sqrt{2}}{2}(R+S)$. The pair $(\pr 1., \Delta)$ is log canonical, $-(\K {\pr 1.}. + \Delta)$ is nef and $\pr 1.$ is Fano. Since $\deg \Delta =2$, $(X,\Delta)$ does not admit a strong $(0,n)$-complement for any $n \in \nn$.
}
\end{example}

The above examples show that to develop a theory of bounded strong complements, we need to fix a set of coefficients $\Lambda$ satisfying the descending chain condition. Furthermore, if $\Lambda$ is infinite, we need $\overline{\Lambda}\subset \qq$.

\subsection{Semi-stable families}

In this subsection, we recall some properties of semi-stable morphisms for pairs. We refer to \cite[Definition-Lemma 5.10]{Kol13} for the definition of semi-log canonical pair. Let $(X,B)$ be a pair, and let $f  \colon  X \rar Z$ be a flat, projective and surjective morphism of quasi-projective varieties. We say that {\em $f  \colon  (X,B) \rar Z$ is a semi-stable family of semi-log canonical pairs} if the following conditions are satisfied:
\begin{itemize}
    \item $\Supp(B)$ avoids the generic and codimension one singular points of every fiber;
    \item $\K X/Z. + B$ is $\qq$-Cartier, where $\K X/Z.$ denotes the relative canonical divisor; and
    \item $X_z$ is reduced and $(X_z,B_z)$ is a connected semi-log canonical pair for all $z \in Z$.
\end{itemize}
Equivalently, we say that $f \colon  X \rar Z$ is a {\em semi-stable morphism for the pair $(X,B)$}. In case $B=0$, we just say that $f  \colon  X \rar Z$ is a {\em semi-stable morphism}.

\begin{proposition} \label{base change normal}
Let $f\colon X \rar Z$ be a semi-stable morphism of normal varieties. Let $\pi\colon Z' \rar Z$ be a birational contraction with $Z'$ normal. Write $X' \coloneqq  X \times_Z Z'$. Then, $X'$ is a normal variety.
\end{proposition}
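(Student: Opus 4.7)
The plan is to verify Serre's criterion: $X'$ is normal if and only if it is both $R_1$ and $S_2$. Both properties rest on the observation that $f'\colon X' \rar Z'$, being the base change of the flat morphism $f$, is itself flat.

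For the $S_2$ condition, I would argue as follows. $Z'$ is normal, hence $S_2$. The fibers of $f'$ have the form $X_z \times_{k(z)} k(z')$ for $z = \pi(z')$, i.e., they are field-extension base changes of the fibers of $f$. The fibers of $f$ are semi-log canonical, hence $S_2$, and $S_2$ is preserved under a flat base change of the residue field. The standard fact that $S_n$ ascends along flat local homomorphisms with $S_n$ base and $S_n$ fibers then implies that $X'$ is $S_2$.

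The heart of the argument is $R_1$. I would pick a codimension-one point $x' \in X'$, set $z' = f'(x')$, and use the flat dimension formula
\[
\codim(x', X') \;=\; \codim(z', Z') + \codim(x', X'_{z'})
\]
to split into two cases. In Case 1, $z'$ is the generic point of $Z'$; since $\pi$ is birational, $k(z') = k(Z)$, so $X'_{z'}$ equals the generic fiber of $f$, which is a localization of the normal scheme $X$ and hence normal. Thus $\mathcal{O}_{X',x'} = \mathcal{O}_{X'_{z'}, x'}$ is a DVR. In Case 2, $z'$ has codimension one in $Z'$ and $x'$ is a generic point of the fiber $X'_{z'}$. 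If $z'$ lies outside the exceptional locus of $\pi$, then $\pi$ is a local isomorphism near $z'$, so $X' \rar X$ is a local isomorphism near $x'$ and normality is inherited from $X$. Otherwise, set $z = \pi(z')$; semi-stability of $f$ forces $X_z$ to be reduced, so in characteristic zero $X'_{z'} = X_z \times_{k(z)} k(z')$ remains reduced, whence $\mathcal{O}_{X'_{z'}, x'}$ is a field. Since $\mathcal{O}_{Z', z'}$ is a DVR (by normality of $Z'$) and $\mathcal{O}_{X',x'}$ is a local flat $\mathcal{O}_{Z', z'}$-algebra whose special fiber is a field, the standard DVR-lifting criterion gives that $\mathcal{O}_{X',x'}$ is itself a DVR.

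The main obstacle is little more than bookkeeping: identifying the fiber $X'_{z'}$ with $X_z \times_{k(z)} k(z')$ cleanly, and confirming that reducedness of the fibers of $f$ persists after the residue field extension $k(z) \hookrightarrow k(z')$. Beyond flatness of $f$, reducedness of its fibers, and normality of $X$ and $Z$, no further input is required.
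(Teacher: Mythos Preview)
Your argument is correct, and the $S_2$ portion matches the paper's proof almost verbatim (both invoke that $S_2$ ascends along a flat morphism with $S_2$ base and $S_2$ fibers). The $R_1$ portion, however, is handled differently. The paper argues globally: it observes that the locus $W \subset X$ where the fiber $X_{f(x)}$ fails to be normal contains no entire fiber and does not dominate $Z$, so its preimage $W' \subset X'$ has codimension at least two; then \cite[Proposition 6.8.3]{EGAIV2} gives normality of $X' \setminus W'$, and $S_2$ finishes. Your approach instead localizes at a codimension-one point and splits on whether its image in $Z'$ is generic or of codimension one, reducing in the latter case to the elementary fact that a flat local extension of a DVR with field as closed fiber is again a DVR. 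Your route is more self-contained and avoids the EGA machinery of $S_n$/$R_n$ morphisms, at the cost of a slightly longer case analysis; the paper's route is shorter once one is willing to cite those results.

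One small omission: the paper separately checks that $X'$ is irreducible (identifying $\psi \colon X' \rar X$ with the blow-up of $f^{-1}\mathcal{I}$, where $\mathcal{I}$ is the ideal sheaf of $\pi$), so that ``normal variety'' is justified rather than just ``normal scheme''. You should add a line to this effect; the quickest fix within your framework is to note that $X' \rar Z'$ is flat over an integral base with irreducible generic fiber, hence $X'$ is irreducible.
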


\begin{proof}
Write $g  \colon  X' \rar Z'$ and $\psi  \colon  X' \rar X$ for the induced morphisms. Let $\mathcal{I} \subset \O Z.$ be the ideal sheaf corresponding to $\pi$ \cite[Theorem II.7.17]{Har77}. Then, since $f$ is flat, $\psi$ is induced by the blow-up of the ideal sheaf $f \sups -1. \mathcal{I} \subset \O X.$. In particular, $X'$ is an irreducible variety \cite[Proposition II.7.16]{Har77}.

Since $f$ is semi-stable, it is an $S_2$ morphism \cite[D\'efinition 6.8.1]{EGAIV2}. Then, by base change, we have that $g$ is an $S_2$ morphism \cite[Proposition 6.8.2]{EGAIV2}. Therefore, since $g$ is $S_2$ and $Z'$ is normal, we conclude that $X'$ is $S_2$ \cite[Proposition 6.8.3]{EGAIV2}.

We are left with showing that $X'$ is $R_1$. Notice that the general fiber of $g$ is normal. Therefore, there is a closed subset $W \coloneqq \lbrace x \in X|X \subs f(x). \; \mathrm{not} \; \mathrm{normal} \; \mathrm{at} \; x \rbrace \subset X$ that contains no fiber and that does not dominate $Z$. Write $W' \coloneqq \psi \sups -1. (W)$. Then, $W'$ has the same property, and therefore $\codim \subs X'. W' \geq 2$. Then, by \cite[Proposition 6.8.3]{EGAIV2}, $X' \setminus W'$ is normal. Since $X'$ is $S_2$, we conclude that it is normal.
\end{proof}

\subsection{Generalized canonical bundle formula}\label{subsec:canbund}

In this subsection, we recall the construction of the generalized canonical bundle formula introduced in \cite{Fil18}. Let $(X,B+M)$ be a generalized sub-pair, and let $f  \colon  X \rar Z$ be a contraction where $\dim Z > 0$. Assume that $(X,B+M)$ is sub-log canonical over the generic point of $Z$ and that $\K X. + B + M \sim \subs \qq,f. 0$. Fix a divisor $L_Z$ on $Z$ such that $\K X. + B + M \sim_\qq f^*L_Z$. Then, for any prime divisor $D$ on $Z$, let $t_D$ be the generalized log canonical threshold of $f^*D$ with respect to $(X,B+M)$ over the generic point of $D$. Then, set $B_Z \coloneqq \sum b_D D$, where $b_D \coloneqq 1-t_D$. Define $M_Z \coloneqq L_Z - (\K Z. + B_Z)$. Hence, we can write
$$
\K X. + B + M \sim_\qq f^*(\K Z. + B_Z + M_Z).
$$

Now, let $\tilde X$ and $\tilde Z$ be higher birational models of $X$ and $Z$ respectively, and assume we have a commutative diagram of morphisms as follows

\begin{center}
\begin{tikzpicture}
\matrix(m)[matrix of math nodes,
row sep=2.6em, column sep=2.8em,
text height=1.5ex, text depth=0.25ex]
{\tilde X & X\\
\tilde Z & Z\\};
\path[->,font=\scriptsize,>=angle 90]
(m-1-1) edge node[auto] {$\phi$} (m-1-2)
edge node[auto] {$g$} (m-2-1)
(m-1-2) edge node[auto] {$f$} (m-2-2)
(m-2-1) edge node[auto] {$\psi$} (m-2-2);
\end{tikzpicture}
\end{center}

We denote by $(\tilde X, \tilde B  + \tilde M)$ the trace of the generalized sub-pair $(X,B+M)$ on $\tilde X$. Furthermore, set $L_{\tilde Z} \coloneqq \psi^* L_{Z}$. With this piece of data, we can define divisors $B_{\tilde Z}$ and $M_{\tilde Z}$ such that
$$
\K \tilde X. + \tilde B + \tilde M \sim_\qq g^*(\K \tilde Z. + B_{\tilde Z} + M_{\tilde Z}),
$$
$B_{Z}= \psi_* B_{\tilde Z}$ and $M_{Z}= \psi_* M_{\tilde Z}$. In this way, b-divisors $\mathbf{B} \subs Z.$ and $\mathbf{M} \subs Z.$ are defined. We write $\bB Z,\tilde{Z}.$ and $\bM Z,\tilde{Z}.$ for the traces of $\bB Z.$ and $\bM Z.$ on any higher model $\tilde{Z}$.

In this setup, we have the following theorem, referred to as the generalized canonical bundle formula.

\begin{theorem}[cf.{\cite[Theorem 1.4]{Fil18}}]\label{generalized canonical bundle formula}
Let $(X',B'+M')$ be a generalized sub-pair with data $X \rar X'$ and $M$. Assume that $B'$, $M'$ and $M$ are $\qq$-divisors. Let $f  \colon  X' \rar Z'$ be a contraction such that $\K X'. + B' + M' \sim \subs \qq, f. 0$. Also, let $(X',B'+M')$ be generalized log canonical over the generic point of $Z'$. Then, the b-divisor $\bM Z'.$ is $\qq$-Cartier and b-nef.
\end{theorem}

\subsection{Generalized adjunction}\label{subsec:genadj}

In this subsection, we recall how to define adjunction for generalized pairs. Let $(X,B+M)$ be a generalized pair, and let $S \subset X$ be a prime divisor in the support of $\lfloor B \rfloor$. Denote by $S^\nu$ the normalization of $S$. Then, consider a log resolution $\pi \colon  X' \rar X$ of $(X,B)$ where $M'$ descends. Set $S' \coloneqq \pi_* \sups -1. S$. By adjunction, we can define the sub-pair $(S',B \subs S'.)$. Then, we define $M \subs S'. \coloneqq M'| \subs S'.$. Therefore, we can regard $(S',B \subs S'. + M \subs S'.)$ as a generalized sub-pair. Then, let $\rho  \colon  S' \rar S^\nu$ be the induced morphism, and set $B \subs S^\nu. \coloneqq \rho_* B \subs S'.$ and $M \subs S^\nu. \coloneqq \rho_* M \subs S'.$. In this way, $(S^\nu,B \subs S^\nu. + M \subs S^\nu.)$ becomes a generalized pair. We refer to this operation as divisorial generalized adjunction.

More generally, let $W \subset X$ be an exceptional generalized log canonical center, and denote by $W^\nu$ its normalization.
To define a generalized pair on $W^\nu$, we argue as follows.
Fix the generalized log canonical place $E'$ dominating $W$, and let $\pi \colon  X' \rar X$ be a higher model where $E'$ appears as a normal prime divisor.
By generalized divisorial adjunction, $E'$ inherits a generalized sub-pair structure $(E',B \subs E'. + M \subs E'.)$ from $(X',B'+M')$.
Then, we consider the induced fibration $\rho  \colon  E' \rar W^\nu$.
Finally, we apply Theorem \ref{generalized canonical bundle formula} to induce a generalized pair structure on $W^\nu$.
From the construction, it follows that $W^\nu=W'$.

In this setup, we have the following statement, referred to as generalized adjunction and inversion thereof.

\begin{theorem}[cf.{\cite[Theorem 1.6]{Fil18}}] \label{generalized adj and inv of adj}
Let $(X',B'+M')$ be a generalized pair with data $X \rar X'$ and $M$. Assume that $B'$, $M$ and $M'$ are $\qq$-divisors. Let $W'$ be a generalized log canonical center of $(X',B'+M')$ with normalization $W^\nu$.
Assume that a structure of generalized pair $(W^\nu,\bB W^\nu. + \bM W^\nu.)$ is incuded on the normalization $W^\nu$ of $W'$.
Then, $(W^\nu,\bB W^\nu. + \bM W^\nu.)$ is generalized log canonical if and only if $(X',B'+M')$ is generalized log canonical in a neighborhood of $W'$.
\end{theorem}

\section{Global strong $(\delta,n)$-complements}

In this section, we prove Theorem~\ref{globalen}.
We start by proving Lemma~\ref{incrcoeff}, which will allow us to perturb the coefficients of a generalized pair while keeping it generalized log canonical.

\begin{notation} \label{notation}
{\em 
Let $\Lambda \subset (0,1]$ be a set of rational numbers satisfying the descending chain condition. 
Given a natural number $m\in \mathbb{N}$,
we consider the partition 
\[
\mathcal{P}_m \coloneqq \left\{ \left(0, \frac{1}{m}\right], \left( \frac{1}{m}, \frac{2}{m} \right], \dots, \left( \frac{m-1}{m}, 1 \right]\right\}
\]
of the interval $(0,1]$.
For each $b\in \Lambda$, we denote by $I(b,m)$ the interval in the partition $\mathcal{P}_m$ such that $b \in I(b,m)$.
Then, for each $b \in \Lambda$, we define
\[
b_m \coloneqq \sup\left\{ x \mid x\in I(b,m) \cap \Lambda \right\}.
\]
Observe that, for any $b\in \Lambda$ and $m$ a positive integer, we have the inequality $b\leq b_m$, since $b \in I(b,m) \cap \Lambda$.
Furthermore, if $m$ is divisible enough, we have $b = \max \lbrace x \mid x \in I(b,m) \rbrace$, which implies the equality $b=b_m$.
We denote by $\mathcal{C}_m=\{b_m \mid b\in \Lambda\}$. Observe that for each $m$ the set $\mathcal{C}_m$ is finite, and the set
\[
\overline{\Lambda} = \bigcup_{m \in \mathbb{N}} \mathcal{C}_m
\]
satisfies the descending chain condition.
Given a boundary divisor $B\geq 0$ on a normal quasi-projective variety $X$, we can write $B=\sum_j b^{(j)} B^{(j)}$ in a unique way such that the $B^{(j)}$ are pairwise different prime divisors on $X$.
Whenever the coefficients of the divisor $B$ belong to $\Lambda$, we can define the boundary divisor $B_m \coloneqq \sum_j b_{m}^{(j)} B^{(j)}$.
By the above discussion, it follows that $B \leq B_m$.
}
\end{notation}

The following lemma is a generalization of~\cite[Proposition 2.50]{Bir16a} 
for $\Lambda$ having rational accumulation points.

\begin{lemma}\label{incrcoeff}
Let $d$ and $p$ be two natural numbers, and $\Lambda \subset \qq$ be a set satisfying the descending chain condition with rational accumulation points. There exists a natural number $m$, only depending on $d,p$, and $\Lambda$ satisfying the following. 
If $X\rightarrow Z$ is a contraction between normal quasi-projective varieties, $(X,B+M)$ is a generalized log canonical pair of dimension $d$ such that 
\begin{itemize}
\item there exists a divisor $\Omega \geq 0$ so that $(X,B+\Omega+M)$ is generalized log canonical and we have $\K X. + B + \Omega + M \sim \subs \qq,Z. 0$;
\item $X$ is $\qq$-factorial of Fano-type over $Z$;
\item ${\rm coeff}(B)\subset \Lambda$; and 
\item $pM'$ is Cartier.
\end{itemize}
Let $B_m$ be as in Notation \ref{notation}.
Then, the following conditions hold
\begin{itemize}
    \item $(X,B_m+M)$ is generalized log canonical; 
    \item we may run a minimal model program for  $-(K_X+B_m+M)$ over $Z$ that terminates with a generalized log canonical pair $(X'',B''_m+M'')$; and 
    \item $-(K_{X''}+B''_m+M'')$ is nef over $Z$.
\end{itemize}
Moreover, if $-(K_{X''}+B''_m+M'')$ has a strong $(\epsilon,n)$-complement over $z\in Z$, then so does $-(K_X+B+M)$.
\end{lemma}

\begin{proof}
We will prove each statement independently by contradiction applying the ascending chain condition for generalized log canonical thresholds~\cite[Theorem 1.5]{BZ16} and the global ascending chain condition for generalized log canonical pairs~\cite[Theorem 1.6]{BZ16}.
\begin{enumerate}
\item Assume it is not true. Then, there exists a sequence of generalized log canonical pairs $(X_i,B_i+M_i)$ as in the statement such that $(X_i,B_{i,i}+M_i)$ is not generalized log canonical, where $B \subs i,i.$ is obtained from $B_i$ as in Notation \ref{notation}.
We claim that we can find boundaries 
$B_i \leq \Gamma_{i} \leq B_{i,i}$ and prime divisors $D_i$ such that 
\begin{equation}\label{ineqcoeff}
{\rm coeff}_{D_i}(B_i) \leq {\rm coeff}_{D_i}(\Gamma_{i}) < {\rm coeff}_{D_i}(B_{i,i}),
\end{equation}
all the remaining coefficients of $\Gamma_i$ belong to $\overline{\Lambda}$,
and 
\[
{\rm coeff}_{D_i}(\Gamma_i) = {\rm glct}(K_{X_i}+B_i+M_i \mid D_i).
\]
In what follows we will write 
\[
B_i = \sum_j b_i^{(j)} B_{i}^{(j)},
\]
where the $B_{i}^{(j)}$ are pairwise different
prime divisors and $b^{(j)}_i \in \Lambda$. 
We will produce $\Gamma_i$ by successively increasing the coefficients of $B_i$
which differ from the coefficients of $B_{i,i}$. 
Indeed, if 
\[
{\rm glct}(K_{X_i}+B_i+M_i \mid B_i^{(1)}) \geq b_{i,i}^{(1)}-b_i^{(1)},
\]
then we can increase the coefficient $b_i^{(1)}$ of $B_i^{(1)}$ to  $b_{i,i}^{(1)}$ 
and the generalized pair will remain generalized log canonical.
By abusing notation, we will denote the new boundary by $B_i$.
Then, we proceed inductively with the other coefficients.
Since $(X_i,B_{i,i}+M_i)$ is not generalized log canonical,
we will eventually find $j_i$ so that 
\[
\beta_i^{(j_i)}={\rm glct}(K_{X_i}+B_i+M_i \mid B_i^{(j_i)}) < b_{i,i}^{(j_i)}-b_{i}^{(j_i)},
\]
so we may increase $b_i^{(j_i)}$ to $\beta_i^{(j_i)}$ to obtain the desired $\Gamma_i$ with $D_i= B_i^{(j_i)}$.
We denote by $\Gamma'_i$ the divisor obtained from $\Gamma_i$ by reducing the coefficient of $D_i$ to zero.
Observe that the coefficients of $\Gamma'_i$ belong to the set $\overline{\Lambda}$, which satisfies the descending chain condition.

Now, we claim that the generalized log canonical thresholds of $(X_i, \Gamma'_i +M_i)$ with respect to $D_i$
form an infinite increasing sequence. This will provide the required contradiction.
Let 
\[
c \coloneqq \limsup_i \left({\rm coeff}_{D_i}(B_{i,i}) \right),
\]
and observe that ${\rm coeff}_{D_i}(B_{i,i}-B_i) \leq \frac{1}{i}$.
Hence, by~\eqref{ineqcoeff}, for every $\delta>0$ we may find $i$ large enough such that
\[
{\rm coeff}_{D_i}(\Gamma_i) \in (c-\delta,c). 
\]
Thus, passing to a subsequence, we obtain an infinite increasing sequence
\[
{\rm coeff}_{D_i}(\Gamma_i) = {\rm glct}( K_{X_i}+\Gamma'_i + M_i \mid D_i),
\]
contradicting~\cite[Theorem 1.5]{BZ16}.
\item 
Since $X$ is of Fano-type over $Z$, then the minimal model program for any divisor on $X$ over $Z$ terminates~\cite[Corollary 1.3.2]{BCHM}.
Hence, we may run a minimal model program for $-(K_X+B_m+M)$, which terminates with $(X'',B''_m+M'')$. 
Moreover, since $K_X+B+\Omega+M$ is $\qq$-trivial over $Z$, 
we conclude that the above minimal model program is $(K_X+B+\Omega+M)$-trivial,
therefore $(X'',B''+\Omega''+M'')$ is generalized log canonical.
Hence $(X'',B''+M'')$ is generalized log canonical as well.
Observe that all the assumptions on $(X,B+M)$ are preserved when running this minimal model program.
Therefore, up to replacing $(X,B+M)$ with $(X'',B''+M'')$, the same argument as in the first step proves that 
$(X'',B''_m+M'')$ is generalized log canonical for $m$ large enough, since all the assumptions of the proposition are preserved by running a minimal model program.
\item Assume this is not true. Then there exists a sequence of generalized log canonical pairs
$(X_i,B_i+M_i)$ as in the statement, such that the minimal model program for the divisor
$-(K_{X_i}+B_{i,i}+M_i)$ terminates with a Mori fiber space $X_i'' \rightarrow Z_i$ for the 
$\qq$-divisor $-(K_{X''_i}+B''_{i,i}+M''_i$).
Observe that $-(K_{X_i''}+B_i''+M_i'')$ is a pseudo-effective divisor over $Z_{i}$, being $-(K_{X_i}+B_i+M_i)\sim \subs \qq,Z. \Omega$ effective over $Z$.
Hence, we conclude that $K_{X_i''}+B_i''+M_i''$ is
anti-nef over $Z_i$.
On the other hand, the divisor $K_{X_i''}+B_{i,i}''+M_i''$ is ample over $Z_i$.
Hence, perturbing the coefficients of $B_i''$ as in the first step, we can produce boundaries
$B_i''\leq \Gamma_i'' < B_{i,i}$ and prime divisors $D''_i$ which are ample over $Z_i$ such that
\[
{\rm coeff}_{D''_i}(B''_i) \leq {\rm coeff}_{D''_i}(\Gamma_i'') < {\rm coeff}_{D''_i}(B_{i,i}''),
\]
all the remaining coefficients of $\Gamma_i''$ belong to $\overline{\Lambda}$, and 
\[
-(K_{X_i''}+\Gamma_i''+M_i'') \equiv 0/Z_i''.
\]
Passing to a subsequence, we may assume that ${\rm coeff}_{D_i''}(\Gamma_i'')$ forms an infinite increasing sequence, so the coefficients of the divisors $\Gamma_i''$ belong to an infinite set satisfying the descending chain condition. By restricting to a general fiber of $X''_i \rightarrow Z_i$, we get a contradiction by the global ascending chain condition for generalized pairs (see ~\cite[Theorem 1.6]{BZ16}).
\end{enumerate}
Now we turn to prove the last statement. Assume that $K_{X''}+B_m^{+}+M''$ is a strong $(\epsilon,n)$-complement for
$-(K_{X''}+B''+M'').$ Let $Y$ be a log resolution of the minimal model program $X \dashrightarrow X''$ with birational projective morphisms $\psi\colon Y\rightarrow X$ and $\phi\colon Y\rightarrow X''$.
Then, we can write
\[
\psi^*(K_{X}+B_m+M)+E = \phi^*(K_{X''}+B''_m+M''),
\]
where $E$ is an effective divisor. Let 
\[
B^+= B_m + \psi_*(E+\phi^*(B^+_m - B''_m)).
\]
Then, we conclude that $K_{X}+B^{+}+M=\psi_*\phi^*(K_{X''}+B^+_m+M'')$.
Hence, $(X,B^{+}+M)$ is generalized $\epsilon$-log canonical and
$n(K_X+B^{+}+M)$ is Cartier over a neighborhood of $z$.
Since $B^{+}\geq B_m \geq B$, we conclude the $(X,B^{+}+M)$ is a strong
$(\epsilon,n)$-complement of the generalized pair $(X,B+M)$.
\end{proof}

\begin{proof}[Proof of Theorem~\ref{globalen}]
In the case that $\epsilon=0$ we will reduce the statement to~\cite[Theorem 1.10]{Bir16a}. When
$\epsilon>0$, we will use a generalized version of the boundedness of Fano varieties to prove the statement.

First, we reduce to the case when $X$ is a $\qq$-factorial variety.
Since $X$ is of Fano-type, we can find a boundary divisor $\Gamma$ on $X$ such that the pair
$(X,\Gamma)$ is klt~\cite[2.10]{Bir16a}.
Then, we can take a small $\qq$-factorialization $\pi \colon Y \rightarrow X$ of $X$.
Let $B_Y$ denote the strict transform of $B$ on $Y$, and let $M_Y$ be the trace of $M$ on $Y$,
Then, the push-forward to $X$ of an strong $(\epsilon,n)$-complement for $(Y,B_Y+M_Y)$ is a strong $(\epsilon,n)$-complement for $(X,B+M)$ (see, e.g. ~\cite[6.1.(2)]{Bir16a}).
Thus, replacing $(X,B+M)$ with $(Y,B_Y+M_Y)$, we may assume that $X$ is a $\qq$-factorial variety.

Now we turn to prove the statement when $\epsilon=0$.
Assume that $\epsilon=0$, meaning that the generalized pairs $(X,B+M)$ are generalized log canonical.
By Lemma~\ref{incrcoeff}, there exists $m \in \mathbb{N}$ such that for every generalized pair $(X,B+M)$ as in the statement 
the following conditions hold:
\begin{enumerate}
\item $(X,B_m+M)$ is generalized log canonical; 
\item we may run a minimal model program for $-(K_X+B_m+M)$ which terminates with a generalized log canonical pair $(X'',B_m''+M'')$; and 
\item $-(K_{X''}+B_m''+M'')$ is nef.
\end{enumerate}
Observe that the coefficients of $B''_m$ belong to the finite set of rational numbers $\mathcal{C}_m$ defined in Notation \ref{notation}. 
Hence, by~\cite[Theorem 1.10]{Bir16a}, we can find $n$ only depending on $d$, $p$, and $\mathcal{C}_m$ so that 
$K_{X''}+B_{m}''+M''$ has a strong $(0,n)$-complement. 
By Lemma~\ref{incrcoeff}, we conclude that the generalized pair $(X,B+M)$ has a strong $(0,n)$-complement.
This proves the theorem in the case that $\epsilon=0$.

Now, we turn to prove the case where $\epsilon>0$ and $\Lambda$ is finite.
Since $pM'$ is Cartier, then the divisor $pM$ is Weil.
By Lemma \ref{lemma strong complements finite coeff}, every $(X,B+M)$ as in the statement admits a strong $(\epsilon,n)$-complement for some $n$ depending on $d$, $\epsilon$ and $\Lambda$.

Finally, we prove the case in which $M'$ is trivial
and $\Lambda$ is possibly infinite,
i.e., the case of pairs.
Let $\mathcal{P}$ denote the set of varieties $X$ corresponding to the pairs $(X,B)$ as in the statement.
By the proof of Lemma~\ref{lemma strong complements finite coeff}, the varieties in $\mathcal{P}$ belong to a bounded family.
Let $\mathcal{X}\rightarrow T$ be a bounding family.
Thus, there exists a positive number $\mathcal{C}=\mathcal{C}(\mathcal{P})$ so that any $X$ as in the statement admits a very ample line bundle $A$ with $A^d\leq \mathcal{C}$.
We may assume that $A^{d-1}\cdot (-K_X)\leq \mathcal{C}$ for all $X$ in $\mathcal{P}$.
Being ${\rm coeff}(B)>\delta>0$ for some fixed $\delta$ small enough, we get 
\[
\mathcal{C} \geq A^{d-1} \cdot (-K_X)\geq  B \cdot A^{d-1}  \geq 
\delta ({\rm red}(B) \cdot A^{d-1}),
\]
where ${\rm red}(B)$ denotes the reduced structure of $B$.
Hence, we deduce that the log pairs $(X,B)$ belong to a log bounded family.
Thus, up to redefining $\mathcal{X}$ and $T$, we may assume that 
there exists a reduced divisor $\mathcal{B}\subset \mathcal{X}$ bounding the boundaries.

Now, we use the boundedness of $(X,B)$ to prove the statement. Recall that we are assuming that $X$ is $\qq$-factorial.
Arguing by contradiction, we assume that there is a sequence $\lbrace (X_i,B_i) \rbrace \subs i \geq 1.$ satisfying the hypotheses of the statement such that $(X_i,B_i)$ admits no strong $(\epsilon,j)$-complement for $j \leq i$.
Let $\mathcal{B}\sups (i).$ be the divisor supported on $\mathcal{B}$ such that $\mathcal{B}\sups (i). _i =B_i$. Since we have $\coeff (B_i) \in \Lambda$, up to passing to a subsequence, we may assume that $\mathcal{B} \sups (i). \leq \mathcal{B} \sups (i+1).$ and $\Supp (\mathcal{B} \sups (i).) =\Supp (\mathcal{B} \sups (i+1).)$. Since the coefficients lie in $\Lambda$, we can set $\mathcal{B} \sups (\infty).\coloneqq \lim \mathcal{B} \sups (i).$.

Let $t_i \in T$ be the closed point corresponding to $(X_i,B_i)$.
Up to passing to a subsequence and replacing $T$ with the resolution of a subvariety, we may assume that $T$ is a smooth variety and $\lbrace t_i \rbrace \subs i \geq 1.$ is a dense sequence on $T$.
Since $(X_i,\mathcal{B}_i \sups (1).)$ is $\epsilon$-log canonical for all $i$, by Proposition \ref{prop Q-factorial family}, we may assume that $\mathcal{X}$ is $\qq$-factorial.

Let $f \colon  \mathcal{X}' \rar \mathcal{X}$ be a log resolution of the pair $(\mathcal{X},\mathcal{B})$.
Up to shrinking $T$, we may assume that the composition $\pi \circ f\colon  (\mathcal{X}',\mathcal{B}' + \mathcal{E}') \rar T$ is a log smooth family, where $\mathcal{B}' \coloneqq f_* \sups -1. \mathcal{B}$, and $\mathcal{E}'$ denotes the reduced exceptional divisor of $f$. In particular, each fiber $(\mathcal{X}'_i,\mathcal{B}'_i+ \mathcal{E}'_i)$ is a log resolution of $(X_i,B_i)$. Let $\mathcal{P}\sups (i).$ be the divisor supported on $\mathcal{B}'+ \mathcal{E}'$ such that $K_{\mathcal{X}'_i}+\mathcal{P}_i \sups (i).$ is the log pull-back of $K_{X_i}+B_i$.
By construction, $K_{\mathcal{X}'}+\mathcal{P} \sups (i).$ is the log pull-back of $K_{\mathcal{X}}+\mathcal{B} \sups (i).$, and therefore the sequence $\lbrace \mathcal{P} \sups (i).\rbrace \subs i \geq 1.$ is increasing and admits a limit $\mathcal{P} \sups (\infty).$.
By construction, each $(\mathcal{X}'_i,\mathcal{P}_i \sups (i).)$ is sub-$\epsilon$-log canonical.
Thus, $\coeff(\mathcal{P} \sups (i).) \leq 1-\epsilon$. By continuity, we argue that $\coeff(\mathcal{P} \sups (\infty).) \leq 1-\epsilon$. Thus, the construction guarantees that the pairs $(X_i,\mathcal{B}_i \sups (\infty).)$ are $\epsilon$-log canonical.

Now, as $\mathcal{B}_i \sups (\infty). \geq B_i$, a strong $(\epsilon,n)$-complement for $(X,\mathcal{B}_i \sups (\infty).)$ is also a strong $(\epsilon,n)$-complement for $(X,B_i)$. By Lemma \ref{lemma strong complements finite coeff}, each pair $(X,\mathcal{B}_i \sups (\infty).)$ admits a strong $(\epsilon,n)$-complement, where $n$ is independent of $i$. This provides the needed contradiction, and the claim follows.
\end{proof}

The following statement is a generalization of Lemma~\ref{generalized BAB} that allows us to 
put the generalized pairs $(X,B+M)$ as in the statement of Lemma~\ref{generalized BAB} in a generalized log bounded family.

\begin{theorem} \label{log boundedness}
Let $d$ and $p$ be two natural numbers, 
$\epsilon$ a positive real number, and $\Lambda \subset \qq$ be a set satisfying the descending chain condition
with rational accumulation points. Let $\mathcal{P}$ be the set of generalized pairs $(X,B+M)$ such that: 
\begin{itemize}\label{generalizedlogboundedness}
\item $(X,B+M)$ is generalized $\epsilon$-log canonical of dimension $d$;
\item $-(K_X+B+M)$ is nef and big; 
\item $\coeff ( B)\subset \Lambda$; and 
\item $pM'$ is Cartier.
\end{itemize}
Then $\mathcal{P}$ is generalized log bounded.
\end{theorem}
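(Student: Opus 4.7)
The plan is to start from the boundedness of the underlying varieties $X$ supplied by Lemma \ref{generalized BAB}, obtain uniform degree bounds on $B$ and $M$ with respect to a polarization on the bounding family, bound the support of $B$ using the DCC hypothesis on its coefficients, and then express $M$ as a difference $\Delta_1 - \Delta_2$ of effective $\qq$-divisors whose supports have uniformly bounded degree. Once $\Supp(B + \Delta_1 + \Delta_2)$ has bounded degree across the family, a relative Hilbert scheme argument will package the couples $(X, \Supp(B + \Delta_1 + \Delta_2))$ into a log bounded family, yielding generalized log boundedness of $\mathcal{P}$.

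First, I would invoke Lemma \ref{generalized BAB} to place the $X$ into a bounded family $\mathcal{X} \to T$ with $T$ of finite type, equipped with a relatively very ample Cartier divisor $\mathcal{H}$ for which both $\mathcal{H}_t^d$ and $-K_{\mathcal{X}_t}\cdot \mathcal{H}_t^{d-1}$ are uniformly bounded. Since $M$ is the pushforward of the nef divisor $M'$, it is pseudo-effective, so $M\cdot \mathcal{H}_t^{d-1}\geq 0$; combined with the nefness of $-(K_X + B + M)$, intersecting against $\mathcal{H}_t^{d-1}$ then produces uniform upper bounds on both $B\cdot \mathcal{H}_t^{d-1}$ and $M\cdot \mathcal{H}_t^{d-1}$. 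The $\epsilon$-log canonical hypothesis constrains the coefficients of $B$ to $\Lambda \cap [0, 1-\epsilon]$, and since $\Lambda$ satisfies DCC its positive part admits a positive minimum $\lambda_0$. Hence every coefficient of $B$ is at least $\lambda_0$, and the degree bound on $B$ then caps both the number of irreducible components of $B$ and $\Supp(B)\cdot \mathcal{H}_t^{d-1}$ uniformly.

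To handle $M$, I would first pass to a small $\qq$-factorialization, after which $X$ is $\qq$-factorial and $M$ is $\qq$-Cartier. Since $X$ is of Fano type, $\epsilon$-klt, and lies in the bounded family above, the effective base-point-freeness techniques for Fano type varieties in \cite{Bir16b} should supply a uniform integer $l$ and an effective $\qq$-divisor $D \sim_\qq -K_X$ with $\Supp(D)\cdot \mathcal{H}_t^{d-1}$ uniformly bounded. The divisor $M - K_X$, being the sum of the pseudo-effective $M$ and the big $-K_X$, is big, so an analogous effective non-vanishing argument on the bounded family yields a uniform integer $n$ and an effective $\qq$-divisor $\Delta_1 \sim_\qq M + D$ whose support has bounded degree. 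Setting $\Delta_2 := D$ then gives $M \sim_\qq \Delta_1 - \Delta_2$ with both summands effective and of uniformly bounded support-degree. Consequently $\Supp(B + \Delta_1 + \Delta_2)$ has uniformly bounded degree, and parametrizing it inside the relative Hilbert scheme of effective divisors of this degree on $\mathcal{X}/T$ produces a bounding family of couples.

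The hard part will be securing the uniform integers $l$ and $n$: effective non-vanishing for $|-lK_X|$ and $|n(M-K_X)|$ requires uniform control on the Cartier indices of $K_X$ and $M$ across the bounded family. The index of $K_X$ is bounded by Birkar's boundedness results for $\epsilon$-klt Fano type varieties, while the index of $M$ must be controlled via the hypothesis that $pM'$ is Cartier on the higher model together with the properties of the small $\qq$-factorialization. Establishing this uniform control for the moduli part is the principal technical obstacle of the argument.
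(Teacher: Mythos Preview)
Your outline diverges from the paper's proof at the key step of bounding $M$, and the point you correctly flag as the ``principal technical obstacle'' is precisely what the paper avoids by a different mechanism.

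The paper does not attempt to bound the Cartier index of $M$ or to produce an effective representative of $M-K_X$ directly. Instead it first applies Theorem~\ref{globalen} to obtain, for each $(X,B+M)\in\mathcal{P}$, a strong $(0,n)$-complement $B^+\geq B$ with a \emph{uniform} $n$. This buys two things at once: (i) $\coeff(B^+)\subset\{1/n,\dots,1\}$, so once $A^{d-1}\cdot B^+$ is bounded one gets a bound on $\Supp(B^+)$ without appealing to the DCC minimum of $\Lambda$; and (ii) the relation $n(K_X+B^++M)\sim 0$ gives $M\sim_\qq -K_X-B^+$. Since $-K_X$ is bounded with the family and $B^+$ has just been bounded, $M$ is automatically bounded up to $\qq$-linear equivalence. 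In the language of Definition~\ref{def gen lob bdd}, one may take $\Delta_1$ and $\Delta_2$ to be the positive and negative parts of any representative of $-K_X-B^+$ supported on the bounding divisor.

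By contrast, your route requires an ``effective non-vanishing'' for $|n(M-K_X)|$ with $n$ uniform. Even granting that $pM$ is Weil and that $X$ lies in a bounded family, this needs a uniform bound on the Cartier index of $M$ itself (not just of $K_X+B+M$), and the passage to a small $\qq$-factorialization does not obviously keep you inside the chosen bounding family. The results you cite from \cite{Bir16b} give effective control for $|-lK_X|$, but the ``analogous'' statement for $M-K_X$ is not available off the shelf; in effect you would be reproving a piece of the complement machinery. The paper's use of Theorem~\ref{globalen} is exactly the device that converts the awkward moduli part $M$ into the tame expression $-K_X-B^+$, thereby eliminating the obstacle you identify rather than confronting it.
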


\begin{proof}[Proof of Theorem~\ref{generalizedlogboundedness}]
By Theorem~\ref{globalen}, for any generalized pair $(X,B+M)\in \mathcal{P}$ there exists a bounded $(0,n)$-complement $(X,B^{+}+M)$.
Since $B^+ \geq B \geq 0$, it suffices to show that the generalized pairs $(X,B^++M)$ are generalized log bounded.

By Lemma~\ref{generalized BAB}, the varieties $X$ corresponding to $\mathcal{P}$ are bounded. Let $\mathcal{X}\rightarrow T$ be a bounding family. Thus, there exists a positive number $\mathcal{C}=\mathcal{C}(\mathcal{P})$ such that every $X$ as in the statement admits a very ample divisor $A$ with $A^d\leq \mathcal{C}$.
Furthermore, we may assume that $A^{d-1}\cdot (-K_X)\leq \mathcal{C}$ for all $X$ in $\mathcal{P}$.

Being $B^+$ effective, $M$ the push-forward of a nef divisor, and $A$ ample, we have $A \sups d-1. \cdot B^+ \geq 0$ and $A \sups d-1. \cdot M \geq 0$. Thus, we get the chain of inequalities
$$
\mathcal{C} \geq A \sups d-1. \cdot (-K_X) \geq A \sups d-1. \cdot (-K_X - B^+) \geq A \sups d-1. \cdot (-K_X - B^+ - M) = 0.
$$
We conclude that $A \sups d-1. \cdot B^+ \leq \mathcal{C}$. 
Recall that $nB^+$ is integral, hence $\coeff (B^+) \subset \lbrace \frac{1}{n}, \ldots , \frac{n-1}{n}, 1 \rbrace$. We conclude that $\Supp (B^+)$ is bounded as well. Let $\mathcal{B}$ be the divisor on $\mathcal{X}$ bounding $\Supp (B^+)$. Since the coefficients of $B^+$ belong to a finite set, there are finitely many divisors $\mathcal{D}_1,\ldots,\mathcal{D}_k$ supported on $\mathcal{B}$ such that for any $(X,B^+ + M)$ we have $B^+ = \mathcal{D}_i|_X$ for some $1 \leq i \leq k$. Therefore, $M \sim_\qq (-\K  \mathcal{X}. - \mathcal{D}_i)|_X$. Thus, $M$ is bounded up to $\qq$-linear equivalence as well, and the claim follows.
\end{proof}

\section{Local strong $(\delta,n)$-complements}

In this section, we prove Theorem~\ref{localconj0n} and Theorem~\ref{localen}.
The former is a generalization of ~\cite[Theorem 1.8]{Bir16a} to the setting of generalized pairs, and the latter is a partial result towards the existence of klt complements in the semi-stable setting.
The proof of the following proposition is analogous to the one in~\cite[Proposition 8.1]{Bir16a}.

\begin{proposition}\label{prop1}
The statement of Conjecture~\ref{encomplement} holds for $\epsilon=\delta=0$ and $\Lambda$ finite, if there exist a boundary divisor $\Gamma\geq 0$ and $0<\beta<1$ satisfying the following
\begin{itemize}
    \item $(X,\Gamma+\beta M)$ is $\qq$-factorial generalized plt;
    \item the divisor $-(K_X+\Gamma+\beta M)$ is ample over $Z$; and 
    \item $S=\lfloor \Gamma \rfloor$ is an irreducible component of $\lfloor B \rfloor$ which intersects the fiber over $z$.
\end{itemize}
\end{proposition}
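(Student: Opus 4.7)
The plan is to restrict the problem to the plt divisor $S$ via generalized adjunction, produce a strong complement there by an inductive hypothesis, and lift it back to $X$ using a Kawamata--Viehweg-type vanishing argument.

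First, I would shrink $Z$ to an affine neighborhood of $z$. Since $(X,\Gamma+\beta M)$ is generalized plt with $S=\lfloor\Gamma\rfloor$, the divisor $S$ is prime; let $\nu\colon S^\nu\to S$ denote its normalization. Applying generalized divisorial adjunction (Subsection~\ref{subsec:genadj}) to $(X,\Gamma+\beta M)$ along $S$ yields a generalized klt pair $(S^\nu,\Gamma_{S^\nu}+\beta M_{S^\nu})$ with $-(K_{S^\nu}+\Gamma_{S^\nu}+\beta M_{S^\nu})$ ample over $Z$, so $S^\nu$ is of Fano type over $Z$. Divisorial adjunction applied to $(X,B+M)$ analogously gives a generalized log canonical pair $(S^\nu,B_{S^\nu}+M_{S^\nu})$ with $-(K_{S^\nu}+B_{S^\nu}+M_{S^\nu})$ nef over $Z$. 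Using the standard control on adjunction coefficients (hyperstandard-type sets as in~\cite{Bir16a}), $\coeff(B_{S^\nu})$ lies in a finite set and some $p'M'_{S^\nu}$ is Cartier on a higher model, both depending only on $d$, $p$, and $\Lambda$.

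Assuming Conjecture~\ref{encomplement} for $\epsilon=\delta=0$ in dimension $d-1$ (that is, Theorem~\ref{localconj0n} in one dimension lower, available by induction on $d$), I would obtain a strong $(0,n)$-complement $B^+_{S^\nu}$ for $(S^\nu,B_{S^\nu}+M_{S^\nu})$ over the image of $z$, with $n=n(d,p,\Lambda)$. After enlarging $n$ by a bounded factor, $N_{S^\nu}\coloneqq n(B^+_{S^\nu}-B_{S^\nu})$ represents an effective section of $\mathcal{O}_{S^\nu}(-n(K_X+B+M)|_{S^\nu})$ near that point. The heart of the argument is to lift this section through the exact sequence
\[
0\to \mathcal{O}_X(-n(K_X+B+M)-S)\to \mathcal{O}_X(-n(K_X+B+M))\to \mathcal{O}_S(-n(K_X+B+M)|_S)\to 0,
\]
which requires the vanishing of $R^1 f_* \mathcal{O}_X(-n(K_X+B+M)-S)$ in a neighborhood of $z$. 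I would rewrite $-n(K_X+B+M)-S\sim_\qq K_X+\Delta+A$, where $A$ is ample over $Z$ — supplied by the ample divisor $-(K_X+\Gamma+\beta M)$ — and $(X,\Delta+M'')$ is generalized klt, built from a convex combination of $(X,B+M)$ and $(X,\Gamma+\beta M)$: the plt slack $1-\beta>0$ provides klt-ness along $S$, while the remaining coefficients absorb the contributions of $B$ and $M$. A generalized Kawamata--Viehweg vanishing theorem then yields the required $R^1 f_*$ vanishing. Setting $B^+\coloneqq B+N/n$ for the lifted section $N$ defines the complement; generalized log canonicity of $(X,B^++M)$ follows from that of $(S^\nu,B^+_{S^\nu}+M_{S^\nu})$ by inversion of generalized adjunction (Theorem~\ref{generalized adj and inv of adj}).

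The main obstacle is the lifting step: one must express $-n(K_X+B+M)-S-K_X$ in the form ``ample plus generalized klt boundary'' while keeping $n$ depending only on $d$, $p$, $\Lambda$. Simultaneously balancing the ampleness coming from $-(K_X+\Gamma+\beta M)$, the plt slack $1-\beta$, and the moduli contribution (typically worked with on a log resolution where $M'$ descends) is delicate, and must be arranged so that the klt structure persists after push-forward to $X$. A secondary technicality is matching the local-over-$z$ framework — in which relative sections are defined via projective closures (Definition~\ref{general relative element}) — with the global linear-system computations underlying the vanishing argument.
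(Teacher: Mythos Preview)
Your outline follows the same strategy as the paper---adjunction to $S$, complement on $S$ by induction, lifting via Kawamata--Viehweg, and inversion of adjunction---but there are two genuine gaps.

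First, the lifting step cannot be carried out on $X$ as you write it. The divisor $-n(K_X+B+M)$ is only $\qq$-Cartier, so your short exact sequence is not well defined as a sequence of line bundles, and $n(B^+_{S^\nu}-B_{S^\nu})$ is not literally a section of the restriction. The paper passes to a log resolution $\phi\colon X'\to X$ on which $M'$ descends and works with the \emph{integral} divisor $L'=-nK_{X'}-nT'-\lfloor(n+1)\Delta'\rfloor-nM'$. To rewrite $L'-S'$ in the form $K_{X'}+(\text{klt boundary})+(\text{nef and big})$ one must introduce an effective $\phi$-exceptional correction divisor $P'$, obtained by first pushing $\Gamma$ towards $B$; only then does $\Sigma'=\Gamma'+n\Delta'-\lfloor(n+1)\Delta'\rfloor+P'$ become an honest boundary with $\lfloor\Sigma'\rfloor=S'$. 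The matching computation on $S'$ shows that $G_{S'}=nR_{S'}+n\Delta_{S'}-\lfloor(n+1)\Delta_{S'}\rfloor+P_{S'}$, not simply $n(B^+_{S}-B_{S})$, is the effective integral divisor in $|(L'+P')|_{S'}|$ to be lifted. You acknowledge this step is ``delicate'' but do not indicate how the rounding and the exceptional correction are arranged; without $P'$ the klt decomposition fails.

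Second, inversion of adjunction only yields generalized log canonicity of $(X,B^++M)$ in a neighborhood of $S$, whereas the statement requires it over the whole fiber $X_z$. The paper closes this with the connectedness principle for generalized pairs: if there were a non-klt center $W$ over $z$ disjoint from $S$, then the convex combination $\Omega=\epsilon\Gamma+(1-\epsilon)B^+$ with moduli part $((1-\epsilon)+\beta\epsilon)M$ would have $-(K_X+\Omega+N)$ big and nef over $z$ yet disconnected non-klt locus, a contradiction. You omit this step entirely.
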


\begin{proof}
We proceed by induction on the dimension of $X$. 
Recall that $\beta M$ is the push-forward of a nef divisor on a higher model, hence its diminished base locus contains no divisors.
In particular, for every $\lambda>0$, we can find $D'\sim_\qq \beta M+\lambda A$ so that
$(X,D)$ is plt, 
where $D=D'+\Gamma$
(similar to Lemma~\ref{generalized BAB}).
Moreover, if we pick $\lambda >0$ small enough, we 
achieve that $-(K_X+D)$ is ample over $Z$, 
and $S=\lfloor D \rfloor$.
Hence, by the first step of the proof of~\cite[Proposition 8.1]{Bir16a}, we conclude that the morphism $S\rightarrow f(S)$ is a contraction and $S$ is of Fano-type over $f(S)$.

We consider a log resolution $\phi\colon X'\rightarrow X$ of $(X,B+M)$ where $M'$ descends,
$S'$ is the birational transform of $S$, and $\psi \colon S'\rightarrow S$ the corresponding morphism.
By generalized adjunction we can write $K_S+B_S+M_S=(K_X+B+M)|_S$, where the coefficients of $B_S$ belong to a set of hyperstandard coefficients $\Omega$ (see, e.g.~\cite[Lemma 3.3]{Bir16a}), and $pM_S'$ is a Cartier divisor.
By Theorem~\ref{globalen} in the case that $f(S)$ is a point or by the inductive hypothesis in the case that $\dim f(S)\geq 1$, we conclude there exists a $n$-complement $K_S+B_S^+ +M_S$ of $K_S+B_S+M_S$ over $z \in f(S)$, where 
$n$ only depends on $\Omega$, $d-1$, and $p$. Replacing $n$ with a bounded multiple, we may assume that $nB$ 
and $nM'$ are integral divisors.
Observe that replacing $\Gamma$ with $\epsilon \Gamma + (1-\epsilon)B$ for some small positive real number $\epsilon$
does not change the assumptions of the proposition.
Hence, we may assume that the coefficients of $\Gamma-B$ are arbitrarily small.

From now on, we will prove that we can lift the $n$-complement $B_S^{+}$ 
to a complement of $K_X+B+M$ over a neighborhood of $z\in Z$.
In order to do so, we will apply Kawamata--Viehweg vanishing on the log resolution $X'$.
We adopt the following notation
\[
N' \coloneqq -(K_{X'}+B'+M')=-\phi^*(K_X+B+M), \text{ and } 
A' \coloneqq -(K_{X'}+\Gamma'+\beta M')=-\phi^*(K_X+\Gamma+\beta M).
\]
We denote by $T'$ the sum of the components of $B'$ with coefficient one, 
and write $\Delta' \coloneqq B'-T'$ 
We will also consider the integral divisor 
\[
L' \coloneqq -nK_{X'}-nT'-\lfloor (n+1)\Delta' \rfloor - nM'.
\]
We claim that there exists a unique integral divisor $P'$ so that
\[
\Sigma' \coloneqq \Gamma'+n\Delta'-\lfloor (n+1)\Delta' \rfloor + P'
\]
is effective, $(X',\Sigma'+\beta M')$ is generalized plt, $\lfloor \Sigma' \rfloor=S'$, and $P'$ is exceptional over $X$.
Indeed, we can define the divisor $P'$ by declaring
\[
{\rm coeff}_D(P') =
\left\{
	\begin{array}{ll}
		0  & \mbox{if } D=S \\
		-{\rm coeff}_D \lfloor \Gamma'-D' +\{ (n+1)\Delta \} \rfloor & \text{otherwise} 
	\end{array}
\right.
\]
for any prime divisor $D$ on $X'$. 
By the definition and the fact that the coefficients of $\Gamma-B$ can be chosen to be arbitrarily small, 
we deduce that the coefficients of $P'$ are contained in $\{0,1\}$.
Finally, it suffices to check that $P'$ is an exceptional divisor over $X$.
If $D$ is a prime divisor whose image on $X$ is a divisor, 
we have that $nB$ is integral, ${\rm coeff}_D(n\Delta')$ is integral,
hence ${\rm coeff}_D P' = - {\rm coeff}_D \lfloor \Gamma \rfloor$ =0,
finishing the claim.

Observe that 
\[ 
L'+P'-S' = K_{X'}+\Sigma'-S'+A'+nN'+\beta M'
\]
is the sum of the klt pair $K_{X'}+\Sigma'-S'$ and the divisor $A'+nN'+\beta M'$, which is nef and big over $Z$.
Up to shrinking $Z$ around $z$, we may assume that $Z$ is affine, so we may apply relative Kawamata--Viehweg vanishing~\cite[Theorem 1-2-5]{KMM85} to deduce that the restriction homomorphism
\begin{equation}\label{surjectivitylp}
H^0(L'+P')\rightarrow H^0((L'+P')|_{S'})
\end{equation}
is surjective.
We denote by $R_{S'}$ the pull-back to $S'$ of $R_S=B_S^+-B_S$ and define 
\[
G_{S'}= nR_{S'}+n\Delta_{S'}-\lfloor (n+1)\Delta_{S'}\rfloor +P_{S'},
\]
where the subscript $S'$ means restriction to $S'$.
We claim that $G_{S'}$ is an effective integral divisor
and $G_{S'}\sim L_{S'}+P_{S'}$, up to shrinking $Z$ around $z$.
Indeed, observe that we have
\[
G_{S'} \sim (n \Delta' - \lfloor (n+1)\Delta' \rfloor +n N' + P')|_{S'}  =(L'+P')|_{S'}=L_{S'}+P_{S'}.
\]
On the other hand, if ${\rm coeff}_{D}(G_{S'})<0$ for some prime divisor $D$,
we have that ${\rm coeff}_D( n\Delta_{S'} - \lfloor (n+1)\Delta_{S'}\rfloor )<0$ as well.
Hence, there is a prime divisor $C$ of $X'$ such that 
${\rm coeff}_C(n \Delta' - \lfloor (n+1) \Delta' \rfloor)<0$ and $D$ is a prime component of $C|_{S'}$.
However, since $n\Delta'$ is integral, we have that ${\rm coeff}_C( n \Delta' - \lfloor (n+1) \Delta' \rfloor)>-1$,
which implies that ${\rm coeff}_D(G_{S'})>-1$.
In particular, we have ${\rm coeff}_D(G_{S'})\geq 0$, being $G_{S'}$ integral.

Therefore, by the surjectivity of~\eqref{surjectivitylp}, there exists $G'\sim L'+P'$ on $X'$
so that $G'|_{S'}=G_{S'}$.
We denote by $G$ the push-forward of $G'$ to $X$. We define $R' \coloneqq \frac{G'}{n}$ and $R$ its push-forward to $X$.
By construction, we have $-n(K_X+B+M)\sim G = nR$, so $B^+ \coloneqq B+R$ is such that $n(K_X+B^{+}+M)$ is Cartier over a neighborhood of $z$.

Finally, we need to check that $(X,B^{+}+M)$ is generalized log canonical over $z$, meaning that $(X,B^+ +M)$ is a strong $(0,n)$-complement of $K_X+B+M$ over $z$. First, we claim that $(X,B^{+}+M)$ is generalized log canonical around $S$.
Observe that $nR'\sim nN'\sim_{\qq,X} 0$, and $\phi_*nR'=nR$, so $\phi^*(R)=R'$.
Moreover, $nR_{S'}=nR'|_{S'}$ implies that $R_S=R|_S$.
We conclude that 
\[
K_S+B^{+}_S+M_S=(K_X+B^{+}+M)|_S,
\] 
so the latter generalized pair is generalized log canonical around $S$ by inversion of adjunction for generalized pairs (see, Theorem \ref{generalized adj and inv of adj} or \cite[Lemma 3.2]{Bir16a}).
If $(X,B^+ +M)$ is not log canonical over $z$, then there is a generalized log canonical center $W$ which intersects the fiber over $z$ and is disjoint from $S$. We define $\Xi= \epsilon \Gamma + (1-\epsilon)B^+$
and $N=((1-\epsilon)+\beta \epsilon)M$ for a small positive real number $\epsilon$. Hence, $(X,\Xi+N)$ is not generalized log canonical at the generic point of $W$ as well, and $-(K_X+\Xi+N)$ is nef and big over a neighborhood of $z$.
This contradicts the connectedness principle for generalized pairs~\cite[Lemma 2.14]{Bir16a}.
Thus, we conclude that $(X,B^++M)$ is generalized log canonical over a neighborhood of $z$.
\end{proof}

\begin{proposition}\label{prop2}
The statement of Conjecture~\ref{encomplement} holds for $\epsilon=\delta=0$ and $\Lambda$ is finite, if there exist $0\leq \tilde{\Delta}\leq \Delta \leq B$ and $0<\beta<1$ so that 
\begin{itemize}
    \item $-(K_X+\Delta+ \beta M)$ and $-(K_X+\tilde{\Delta}+\beta M)$ are nef and big over $Z$;
    \item some component of $\lfloor \Delta \rfloor$ intersects the fiber over $z$; and 
    \item the generalized pair $(X,\tilde{\Delta}+\beta M)$ is generalized klt.
\end{itemize}
\end{proposition}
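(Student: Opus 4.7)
The strategy is to verify the hypotheses of Proposition~\ref{prop1} by a tie-breaking argument, and then invoke that proposition directly. First, I reduce to the case $X$ is $\qq$-factorial by taking a small $\qq$-factorialization $\pi\colon Y\to X$, exactly as in the proof of Theorem~\ref{globalen}: since $X$ is of Fano type over $Z$, this modification exists, and a strong $(0,n)$-complement on $Y$ pushes forward to one on $X$. Next, choose an irreducible component $S$ of $\lfloor \Delta \rfloor$ meeting the fiber over $z$; since $\tilde\Delta\leq \Delta\leq B$, one automatically has $\operatorname{coeff}_S(B)=1$, so $S$ is a component of $\lfloor B \rfloor$ as required by Proposition~\ref{prop1}.

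The core step is to construct a boundary $\Gamma$ and a rational number $\beta'\in(0,1)$ so that $(X,\Gamma+\beta' M)$ is $\qq$-factorial generalized plt with $\lfloor \Gamma \rfloor = S$ and $-(K_X+\Gamma+\beta' M)$ ample over $Z$. Since $-(K_X+\Delta+\beta M)$ is big and nef over $Z$ and $X$ is of Fano type over $Z$, I can write, for any sufficiently small rational $\theta>0$,
\[
-(K_X+\Delta+\beta M) \sim_{\qq,Z} \theta A + F,
\]
with $A$ ample over $Z$ and $F\geq 0$ an effective $\qq$-divisor. By the usual freedom in such decompositions (and replacing $F$ by a $\qq$-linearly equivalent divisor), we may arrange that $F$ does not contain $S$ in its support but does pass through the generic points of all other generalized non-klt centers of $(X,\Delta+\beta M)$ that meet the fiber over $z$. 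For $s\in(0,1)$ close to $1$ and a small rational $\eta>0$, put
\[
\Gamma \;\coloneqq\; s\tilde\Delta + (1-s)\Delta + s\bigl(1-\operatorname{coeff}_S(\tilde\Delta)\bigr)S - \eta F.
\]
A direct coefficient check shows $\operatorname{coeff}_S(\Gamma)=1$ and every other coefficient is a convex combination of coefficients strictly less than $1$ (as $(X,\tilde\Delta+\beta M)$ is generalized klt) with a subtraction along components that were originally equal to $1$; for $s$ close to $1$ and $\eta$ appropriately small, all other coefficients become strictly less than $1$, so $\lfloor \Gamma \rfloor = S$. Generalized plt-ness of $(X,\Gamma+\beta M)$ away from $S$ then follows from the generalized klt property of $(X,\tilde\Delta+\beta M)$ together with inversion of adjunction. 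The computation
\[
-(K_X+\Gamma+\beta M) \sim_{\qq,Z} s\bigl(-(K_X+\tilde\Delta+\beta M)\bigr) + (1-s-\eta)\bigl(-(K_X+\Delta+\beta M)\bigr) + \eta(\theta A + F) - \eta F
\]
expresses the anti-log-canonical class as a positive combination of nef-and-big classes plus an ample piece $\eta\theta A$, hence ample over $Z$ (after possibly shrinking $s,\eta,\theta$). Choosing $\beta'=\beta$ (or slightly adjusting if needed to absorb the descent data of $M$), the hypotheses of Proposition~\ref{prop1} are satisfied.

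Applying Proposition~\ref{prop1} to $(X,B+M)$ with this auxiliary data yields a strong $(0,n)$-complement for $(X,B+M)$ over $z$, with $n$ depending only on $d$, $p$ and $\Lambda$. The main obstacle is the tie-breaking: simultaneously enforcing $\lfloor \Gamma \rfloor = S$ exactly, generalized plt-ness, and ampleness of $-(K_X+\Gamma+\beta M)$ over $Z$, while respecting the constraints coming from the generalized sub-pair structure. The flexibility needed to arrange $F$ to avoid $S$ and to dominate the unwanted generalized non-klt centers is the crucial input; once this is achieved, Proposition~\ref{prop1} does the rest.
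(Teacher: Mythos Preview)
Your tie-breaking construction has two concrete gaps that prevent the reduction to Proposition~\ref{prop1} from going through.

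First, the ampleness computation is incorrect. With $\Gamma = s\tilde\Delta + (1-s)\Delta + s(1-\operatorname{coeff}_S\tilde\Delta)S - \eta F$ one finds
\[
-(K_X+\Gamma+\beta M)\;\sim_{\qq,Z}\; s\bigl(-(K_X+\tilde\Delta+\beta M)\bigr)+(1-s)\bigl(-(K_X+\Delta+\beta M)\bigr)\;-\;s(1-\operatorname{coeff}_S\tilde\Delta)\,S\;+\;\eta F,
\]
and the term $-s(1-\operatorname{coeff}_S\tilde\Delta)S$ has been dropped from your displayed identity. Since $s$ is close to $1$ this is a \emph{fixed} negative multiple of the effective divisor $S$, while your ample contribution $\eta\theta A$ is arbitrarily small; the resulting class need not be ample (or even nef) over $Z$. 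Second, the assertion that $(X,\Gamma+\beta M)$ is generalized plt is not justified: knowing that $(X,\tilde\Delta+\beta M)$ is generalized klt and then raising the coefficient of a single prime divisor $S$ to $1$ does not yield plt in general (inversion of adjunction does not supply this direction). Related to this, subtracting $\eta F$ can make $\Gamma$ non-effective, and the claim that $F$ can be chosen to avoid $S$ while containing all other generalized non-klt centers is unsupported; such centers may lie in the stable base locus of $-(K_X+\Delta+\beta M)$.

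These are exactly the obstructions that force the paper to argue differently. After passing to a $\qq$-factorial generalized dlt model of $(X,B+M)$, the paper writes $-(K_X+\Delta+\beta M)\sim_{\qq,Z}A+G$ and branches on whether $\operatorname{supp}(G)$ contains a generalized non-klt center. If not, a small perturbation $\Delta+\delta G$ already gives a dlt pair with ample anti-log-canonical class, and one peels off the components of $\lfloor\Delta\rfloor$ other than $S$ to get plt. In the hard case, the paper forms $\Omega_s=\Delta_s+t_s(G+\Delta-\Delta_s)$ with $t_s$ the generalized log canonical threshold, shows by an explicit discrepancy computation that for $s\ll 1$ the non-klt places of $(X,\Omega_s+\beta M)$ are among those of $(X,\Delta+\beta M)$ and that $-(K_X+\Omega_s+\beta M)$ is ample over $Z$; when $\lfloor\Omega_s\rfloor=0$ one must pass to a higher model extracting a single exceptional divisor $S''$ and apply Proposition~\ref{prop1} there. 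Your direct perturbation on $X$ cannot substitute for this extraction step.
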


\begin{proof}
Taking a $\qq$-factorial generalized dlt modification of $(X,B+M)$,
we may assume that $X$ is $\qq$-factorial and $(X,B+M)$ is generalized dlt (see,~\cite[\S 4]{BZ16} or~\cite{Fil18}).
 Write $-(K_X+\Delta+\beta M)\sim_{\qq,Z} A+G$, where $A$ is ample and $G$ is effective. If $\Supp(G)$ does not contain any generalized non-klt center of $(X,\Delta)$, then the pair $(X,\Delta+\delta G+\beta M)$ is generalized dlt for $\delta$ small enough.
 Moreover $-(K_X+\Delta+\delta G+ \beta M)$ is ample. Indeed, we can write
 \[
 -(K_X+\Delta+\delta G + \beta M)\sim_\qq (1-\delta)\left( \frac{\delta}{1-\delta} A + A + G\right),
 \]
and the right-hand side is the sum of an ample divisor and a nef divisor over $Z$.
Let $S$ be a prime component of $\lfloor \Delta + \delta G \rfloor$ which intersects the fiber over $z$. We can find a small positive real number $\epsilon$, so that
\[
\Gamma \coloneqq \Delta + \delta G  -\epsilon\lfloor \Delta\rfloor + \epsilon S
\]
is a boundary satisfying the following conditions hold:
\begin{itemize}
    \item $(X,\Gamma+\beta M)$ is $\qq$-factorial generalized plt, 
    \item $-(K_X+\Gamma+\beta M)$ is ample over $Z$; 
    \item and $S=\lfloor \Gamma \rfloor$ is an irreducible component of $\lfloor B\rfloor$ which intersects the fiber over $z$.
\end{itemize}
Then, by Proposition~\ref{prop1}, we conclude that the statement of the theorem holds for $(X,B+M)$.

From now on, we may assume that $\Supp(G)$ contains some generalized non-klt center of $(X,\Delta+\beta M)$.
Set $\Delta_s \coloneqq s\tilde{\Delta} +(1-s)\Delta$ for any $s\in [0,1]$.
Observe that $(X,\Delta_s+\beta M)$ is generalized klt for any $s\in (0,1]$ and $-(K_X+\Delta_s+\beta M)$ is nef and big over $Z$.
Indeed, it is the sum of two nef and big divisors.
More precisely, we can write
\begin{equation}\label{nefbig}
-(K_X+\Delta_s+\beta M) = -(K_X+\Delta+\beta M)+\Delta-\Delta_s \sim_{\rr,Z}
A+G+\Delta-\Delta_s.
\end{equation}
Define $\Omega_s \coloneqq \Delta_s + t_s(G+\Delta - \Delta_s)$,
where $t_s$ is the generalized log canonical threshold of $(X,\Delta_s+\beta M)$ with respect to $G+\Delta-\Delta_s$ over $z$.
We claim that, for $s$ small enough, the following holds:
\begin{itemize}
    \item every generalized non-klt place of $(X,\Omega_s+\beta M)$ is a generalized non-klt place of $(X,\Delta+\beta M)$; and
    \item the divisor $-(K_X+\Omega_s+\beta M)$ is ample over $Z$.
\end{itemize}
Let $\pi \colon Y \rightarrow X$ be a log resolution of the couple 
$(X,\Delta+G+\beta M)$ where $M'$ descends. Write $F \coloneqq \Delta-\tilde{\Delta}$
and $\pi^*(F)=\sum_i f_i E_i$, where the $E_i$'s are pairwise distinct prime divisors and the $f_i$'s are positive.
We will write $\pi^*(G)=\sum_i g_i E_i$, where the $g_i$'s are positive numbers.
We can write
\[
\pi^*(K_X+\tilde{\Delta}+\beta M) = K_Y+\sum_i e_i E_i +\beta M_Y,
\]
where the real numbers $e_i$ are at most one.
Thus, we can compute
\[
\pi^*(K_X+\Delta_s+t(G+\Delta-\Delta_s))=
K_Y +\sum_i (e_i+(1-s+ts)f_i+tg_i)E_i+M_Y
\]
for any $s\in [0,1)$ and $t>0$.
Hence, we conclude that 
\[
t_s = \min\left\{
t_{i}(s) \mid E_i\text{ is a divisor on Y}
\right\},
\]
where 
\[
t_{i}(s)= \frac{1-e_i-(1-s)f_i}{g_i+sf_i}.
\]
Since the functions $t_i(s)$ are monotone with respect to $s$.
We conclude that if $t_i(s)=t_s$ for $s$ small enough, then $t_i(0)=t_0=0$.
This means that if $E_i$ is a generalized log canonical place for
$(X,\Omega_s+\beta M)$ for all $s> 0$ small enough, then it is a generalized log canonical place
for $(X,\Delta+\beta M)$. Moreover, observe that $t_s$ converges to zero when $s$ converges to zero. 
This proves the first part of the claim.

Observe that 
\begin{align*}
-(K_X+\Omega_s+\beta M) &=
-(K_X+\Delta)-\Delta_s+\Delta-t_s(G+\Delta-\Delta_s)-\beta M \\
& = -(K_X+\Delta+\beta M) +(1-t_s)(\Delta-\Delta_s) -t_sG \\
& \sim_\rr A+G+(1-t_s)(\Delta-\Delta_s)-t_sG \\
& = (1-t_s)\left( \frac{t_s}{1-t_s}A +A+G+\Delta-\Delta_s \right)\\
& \sim_\rr t_sA - (1-t_s)(K_X+\Delta_s+\beta M),
\end{align*}
where we use the linear equivalence~\eqref{nefbig} in the last step.
Hence, $-(K_X+\Omega_s+\beta M)$ is ample for $s$ small enough.
This proves the second part of the claim.
From now on, we will fix $s$ small enough as in the claim and denote such $\Omega_s$ by $\Omega$.

If $\lfloor \Omega \rfloor\neq 0$, we can perturb the coefficients to guarantee that $\lfloor \Omega \rfloor$ is irreducible and $(X,\Omega+\beta M)$ is generalized plt.
Then, we conclude by Proposition~\ref{prop1}. Thus, we may assume $\lfloor \Omega \rfloor =0$.
Consider $(Y,\Omega_Y+\beta M_Y)$ a $\qq$-factorial dlt modification of $(X,\Omega+\beta M)$.
Observe that $K_X+\beta M$ is a generalized minimal model of $K_{Y}+\lfloor \Omega_Y \rfloor+\beta M_Y$ over $X$, since $(X,\beta M)$ is generalized klt and $\lfloor \Omega \rfloor$ is the reduced exceptional divisor of $Y \rar X$.
Let $X''\dashrightarrow X$ be the last step of a $(\K Y. + \lfloor \Omega_Y \rfloor + \beta M_Y)$-MMP over $X$.
Observe that in the last step of this MMP, 
$\lfloor \Omega_Y\rfloor$ has a unique component, 
and the $(Y,\lfloor \Omega_Y \rfloor +\beta M_Y)$ is generalized dlt,
hence it is generalized plt.
This step is a morphism that contracts a prime divisor $S''$ of $X''$ so that $(X'',S''+\beta M)$ is generalized plt and $-(K_{X''}+S''+\beta M'')$ is ample over $X$.
We denote by $K_{X''}+\Omega''+\beta M''$ and $K_{X''}+\Delta''+\beta M''$
the pull-backs of $K_X+\Omega+\beta M$ and $K_X+\Delta+\beta M$ to $X''$.
Hence, we have that $\lfloor \Omega'' \rfloor = \lfloor \Delta'' \rfloor = S''$. 
It suffices to produce a strong $(0,n)$-complement for 
$K_{X''}+B''+\beta M''$ over $z\in Z$.
If we denote 
\[
\Gamma'' \coloneqq (1-t)\Omega'' + tS''
\]
for $t>0$ small enough,
we have that $(X'',\Gamma''+\beta M)$ is a generalized plt pair,
$-(K_{X''}+\Gamma''+\beta M)$ is ample over $Z$, and 
$\lfloor \Gamma''\rfloor = S''$ is a prime divisor which is a component of $\lfloor B''\rfloor$ that intersects the fiber over $z$. Since $\Gamma'' \leq B''$, we can apply Proposition~\ref{prop1} to conclude that $K_{X''}+B''+ M''$ has a strong $(0,n)$-complement.
\end{proof}

\begin{proof}[Proof of Theorem~\ref{localconj0n}]
In the case that $Z$ is a point, then this is already proved in Theorem~\ref{globalen}, so we may assume $Z$ has dimension at least one.

Denote by $f\colon X\rightarrow Z$ the contraction morphism.
Let $N$ be a prime effective Cartier divisor on $Z$ passing through $z$, and let $t$ be the generalized log canonical threshold of $K_X+B+M$ with respect to $f^*N$ over $z$. Let $(X'',\Omega''+M'')$ be a generalized dlt modification of $(X,\Omega+M)$, where $\Omega \coloneqq B+tf^*N$ (see,~\cite[\S 4]{BZ16} or~\cite[\S 3]{Fil18}). By~\cite[2.13(6)]{Bir16a}, we know that $X''$ is of Fano-type over $Z$. 
There is a boundary $0\leq \Delta'' \leq \Omega''$ such that the coefficients of $\Delta''$ are contained in $\Lambda$,
some component of $\lfloor \Delta'' \rfloor$ is vertical over $Z$ intersecting the fiber over $z$, 
and $B\leq \Delta$, where $\Delta$ is the push-forward of $\Delta''$ to $X$.
We run a minimal model program for $-(K_{X''}+\Delta''+M'')$ over $Z$.
Since $-(K_{X''}+\Delta''+M'')$ is pseudo-effective over $Z$, this minimal model program terminates with a nef divisor $-(K_Y+\Delta''_Y+M_Y)$. 
Observe that $-(K_{X''}+\Omega''+M'')$ is semi-ample over $Z$, as $X''$ is of Fano type over $Z$.
Hence, we can find an effective divisor $D$ so that $(X'',\Omega''+D+M'')$ is generalized log canonical over $Z$ and linearly equivalent to zero over $Z$.
Thus, the above minimal model program is $(K_{X''}+\Omega''+D+M'')$-trivial, and the generalized pair $(Y, \Omega_Y+D_Y+M_Y)$ is generalized log canonical.
Thus, we conclude that $(Y,\Delta_Y+M_Y)$ is generalized log canonical.
In particular, no component of $\lfloor \Delta'' \rfloor$ is contracted by this minimal model program.
Indeed, since we are running a minimal model program for $-(K_{X''}+\Delta''+M'')$, if a component of $\lfloor \Delta''\rfloor$ is contracted, the log discrepancy of the corresponding divisorial valuation becomes negative.
Replacing $(X,B+M)$ with $(Y,\Delta_Y+M_Y)$, we may assume that $\lfloor B\rfloor$ is non-trivial.
Moreover, by Lemma~\ref{incrcoeff} we may assume that the coefficients of $B$ belong to a finite set of rational numbers.

We claim that for every $0<\alpha<\beta<1$ the divisor
$-(K_X+\alpha B+ \beta M)$ is a big divisor over $Z$.
Indeed, since $X$ is of Fano-type over $Z$, there exists a big boundary $B_1$ over $Z$ so that $K_X+B_1\sim_{\qq,Z} 0$ and $(X,B_1)$ is klt.
Moreover, since $-(K_X+B+M)$ is semi-ample over $Z$, we may find $B_2$ effective so that 
$K_X+B+B_2+M\sim_{\qq,Z} 0$ and $(X,B+B_2+M)$ is generalized log canonical.
Hence, we have that
\[
\beta(K_X+B+B_2+M) + (1-\beta)(K_X+B_1) \sim_{\qq,Z} 0.
\]
It follows that
\[
-(K_X+\beta B+\beta M) \sim_{\qq,Z} \beta B_2 +(1-\beta)B_1
\]
is big over $Z$. Therefore, 
\[
-(K_X+\alpha B+\beta M) \sim_{\qq,Z} -(K_X+\beta B + \beta M) + (\beta -\alpha)B 
\]
is big over $Z$ as well.
We define the divisor $\Delta \coloneqq B_{\rm ver}+\alpha B_{\rm hor}$,
where $B_{\rm ver}$ and $B_{\rm hor}$ are the vertical and horizontal components of $B$ over $Z$. Observe that $\Delta=\alpha B$ over the generic fiber of $f$, so we have that the divisor 
$-(K_X+\Delta+\beta M)$ is big over $Z$.

Let $X\rightarrow V$ be the contraction defined by $-(K_X+B+M)$.
Run a minimal model program for $-(K_X+\Delta+\beta M)$ over $V$,
which terminates with a model $X''$ where $-(K_{X''}+\Delta''+\beta M'')$ is nef over $V$. 
We claim that the minimal model program does not contract any component of $\lfloor \Delta \rfloor$. Indeed, the above minimal model program is $(K_X+B+M)$-trivial, hence $(K_{X''}+\Delta''+\beta M'')$ is generalized log canonical. 
If some component of $\lfloor \Delta \rfloor$ is contracted by the above minimal model program, then its center on $X''$ is a generalized non-log canonical center of $(X'',\Delta''+\beta M'')$.
This provides the required contradiction.
Since $-(K_X+B+M)$ is the pull-back of a divisor on $V$ that is ample over $Z$, for a small positive real number $\epsilon$, the generalized pair
\begin{equation}\label{newgenpair1}
    (X'', \epsilon \Delta'' + (1-\epsilon)B'' + (1-\epsilon(1-\beta)) M'')
\end{equation}
is generalized log canonical, the divisor $\lfloor \epsilon \Delta '' +(1-\epsilon)B''\rfloor$ has an irreducible component which intersects the fiber over $z$, 
and 
\[
-(K_{X''}+ \epsilon \Delta'' + (1-\epsilon)B'' + (1-\epsilon(1-\beta)) M'')
\]
is nef and big over $Z$.
Observe that for $\epsilon$ small enough, every curve which intersects the pair in~\eqref{newgenpair1}
trivially also intersects $(K_{X''}+B''+ M'')$ trivially.
Indeed, for $\epsilon$ small enough, 
the only curves which may intersect the pair in~\eqref{newgenpair1},
must be contracted by $X\rightarrow V$.
Hence, we may assume that every curve which intersect the pair in~\eqref{newgenpair1} trivially 
is also a curve contracted by $X''\rightarrow V$.
Replacing $(X,\Delta+\beta M)$ with the generalized pair in~\eqref{newgenpair1}
and $\beta$ with $(1-\epsilon(1-\beta))$, we may assume that $-(K_X+\Delta+\beta M)$ is nef and big over $Z$. Moreover, 
we may assume that every $(K_X+\Delta+\beta M)$-trivial curve over $Z$ is also contracted by $X''\rightarrow V$.

Let $X\rightarrow W$ be the morphism defined by $-(K_X+\Delta+\beta M)$.
By the above assumptions, we know that this morphism is $(K_X+B+M)$-trivial.
Define $\tilde{\Delta}=\gamma \Delta$ for $0\ll \gamma <1$, and let $X\dashrightarrow X''$ be a minimal model program for $-(K_X+\tilde{\Delta}+\beta M)$ over $W$.
Replacing $X''$ with the generalized dlt model of $(X'',\Delta''+\beta M)$, we
may assume that $\lfloor \Delta'' \rfloor$ has a component that intersects the fiber over $z$ non-trivially.
Observe that $-(K_{X''}+\tilde{\Delta}''+\beta M'')$ is nef and big over $W$
and it is the pull-back of a divisor on $W$ which is ample over $Z$.
Hence, $-(K_{X''}+\tilde{\Delta}''+\beta M'')$ is pseudoo-effective and nef over $Z$.
Hence, for a small positive real number $\epsilon$ the generalized pair 
\begin{equation}\label{newgenpair2}
(X'',\epsilon \tilde{\Delta}'' +(1-\epsilon)\Delta'' + \beta M'')
\end{equation}
is generalized klt, and 
\[
-(K_{X''} + \epsilon \tilde{\Delta}'' +(1-\epsilon)\Delta'' + \beta M'')
\]
is nef and big over $Z$, being a positive linear combination of a nef and big divisor and a nef and pseudo-effective divisor. 
Replacing $(X,\tilde{\Delta}+\beta M)$ with
the generalized pair in~\eqref{newgenpair2}, we may assume that there exist $0\leq \tilde{\Delta}\leq \Delta \leq B$ and 
$0<\beta<1$ so that the assumptions of Proposition~\ref{prop2} hold, concluding the proof.
\end{proof}

We turn to prove Theorem~\ref{localen}.
In order to do so, we will prove Lemma~\ref{semistable over curve case}, which is the statement of the theorem when $\dim Z=1$. Then, we will show Proposition~\ref{log canonical closure}, which will allow us to take log canonical closures of morphisms.

\begin{lemma} \label{semistable over curve case}
Let $d$ be a positive integer, $\epsilon$ a positive real number and $\Lambda \subset [0,1]$ a finite set of rational numbers. Then, there exist a positive integer $n$ and a positive real number $\delta$ only depending on $d$, $\epsilon$ and $\Lambda$ such that the following holds. Let $\pi \colon X \rar C$ be a contraction of normal quasi-projective varieties, $C$ a curve and $(X,B)$ an $\epsilon$-log canonical pair of dimension $d$ such that
\begin{itemize}
\item $-(\K X. + B)$ is nef over $C$;
\item $X$ is of Fano-type over $C$;
\item $\coeff (B) \subset \Lambda$; and
\item $\pi \colon  (X,B) \rar C$ is a semi-stable family of semi-log canonical pairs.
\end{itemize}
Then, for every point $o \in C$, there exists a strong $(\delta,n)$-complement for $(X,B)$ over $o$.
\end{lemma}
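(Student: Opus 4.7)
The plan is to apply Theorem~\ref{localconj0n} to the enlarged pair $(X,B+X_o)$, and then exploit the semi-stable structure together with effective basepoint-freeness to upgrade the resulting $(0,n)$-complement to a $(\delta,n)$-complement with $\delta>0$. Since $\pi\colon X\to C$ is semi-stable with $C$ a smooth curve, the fiber $X_o=\pi^*o$ is a reduced Cartier divisor on $X$; by inversion of adjunction, the semi-log canonicity of $(X_o,B_o)$ promotes to log canonicity of $(X,B+X_o)$ in a neighborhood of $X_o$, and combined with the $\epsilon$-log canonicity of $(X,B)$ this shows that $(X,B+X_o)$ is globally log canonical. Moreover $-(K_X+B+X_o)\sim_{\qq,C}-(K_X+B)$ is nef over $C$, $\coeff(B+X_o)\subset\mathcal{R}\cup\{1\}$ is finite, and $X$ is of Fano type over $C$. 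Theorem~\ref{localconj0n} applied to $(X,B+X_o)\to C$ then yields a bounded positive integer $n_0$, depending only on the data, and an effective divisor $R\geq 0$ such that $B+X_o+R$ is a strong $(0,n_0)$-complement over $o$. Setting $B^+:=B+R$, one has $n_0(K_X+B^+)\sim -n_0 X_o\sim 0$ over $o$ and $(X,B^+)$ log canonical, so $B^+$ is a strong $(0,n_0)$-complement of $(X,B)$ over $o$.

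To upgrade to $\delta>0$, I would split the log discrepancies of $(X,B^+)$ according to whether divisors $E$ over $X$ are horizontal or vertical over $C$, using the additive identity
\[
a_E(X,B^+)=a_E(X,B+X_o+R)+\ord_E(\pi^*o).
\]
If $E$ is vertical, i.e.\ its image in $C$ is the point $o$, then $\ord_E(\pi^*o)\geq 1$ and the log canonicity of $(X,B+X_o+R)$ forces $a_E(X,B^+)\geq 1$. The remaining issue concerns horizontal $E$, for which $\ord_E(\pi^*o)=0$ and $a_E(X,B^+)=a_E(X,B)-\mult_E(R)\geq \epsilon-\mult_E(R)$. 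By Proposition~\ref{replace complement in linear series} applied to $(X,B+X_o)$, after enlarging $n_0$ by a bounded integer factor to some $n$, we may assume that $nR$ is a general element of the relative linear system $|-n(K_X+B+X_o)|$ over $o$ in the sense of Definition~\ref{general relative element}.

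The main obstacle is then to show that $|-n(K_X+B+X_o)|$ is basepoint-free over a neighborhood of $o$ for a uniformly bounded $n$; once this is in place, a general member has $\mult_E(R)=0$ at every divisor $E$ exceptional over $X$ with center meeting $X_o$ (as basepoint-free general elements avoid any prescribed proper subvariety of positive dimension), while $\mult_E(R)\leq \tfrac{1}{n}$ at every prime divisor of $X$, yielding that $(X,B^+)$ is $(\epsilon-\tfrac{1}{n})$-lc. To establish the required basepoint-freeness, I would invoke Koll\'ar's effective basepoint-free theorem~\cite[Theorem 1.1]{Kol93}: picking a klt boundary $\Gamma$ witnessing the Fano-type structure of $X$ over $C$, the divisor $-n(K_X+B+X_o)-(K_X+\Gamma)$ is nef and big over $C$ for every $n\geq 1$, and the bounded Cartier index of $K_X+B+X_o$ (cf.~\cite[Lemma 2.25]{Bir16a}, using finiteness of $\mathcal{R}\cup\{1\}$) then provides the uniform $n$ making $|-n(K_X+B+X_o)|$ basepoint-free over $C$. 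Choosing such an $n$ further large enough that $\tfrac{1}{n}<\epsilon$ and setting $\delta:=\epsilon-\tfrac{1}{n}>0$ completes the proof, with $\delta$ and $n$ depending only on $d$, $\epsilon$ and $\mathcal{R}$.
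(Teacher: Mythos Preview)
Your overall strategy---add $X_o$, take a $(0,n)$-complement of $(X,B+X_o)$, then show the resulting complement of $(X,B)$ is already $\delta$-lc---matches the paper's, and your vertical/horizontal splitting is natural. The vertical argument is correct and is essentially equivalent to the paper's step of subtracting $X_o$.

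The gap is in the horizontal case. You need $|-n(\K X.+B+X_o)|$ basepoint-free over a neighborhood of $o$ for a \emph{uniformly bounded} $n$, and you justify this by asserting that $\K X.+B+X_o$ has bounded Cartier index via \cite[Lemma~2.25]{Bir16a}. That lemma, however, requires the ambient variety to belong to a bounded family (with a very ample divisor of bounded volume), and nothing of the sort is established for the total space $X$: the curve $C$ is arbitrary and the special fiber $X_o$ is only slc, so the Cartier index of $\K X.+B$ near $o$ is not controlled by the given data. Finiteness of $\mathcal{R}\cup\{1\}$ only bounds the \emph{Weil} index, not the Cartier index. Since $-(\K X.+B)$ is semiample over $C$, freeness holds for \emph{some} $n$, but this $n$ is not uniform.

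The paper circumvents this by working away from $o$. It first applies BAB (via Lemma~\ref{generalized BAB}) to the general fibers $(X_c,B_c)$ with $c\neq o$: these are $\frac{\epsilon}{2}$-lc weak log Fano and hence bounded, so \cite[Lemma~2.25]{Bir16a} bounds the Cartier index of $\K X_c.+B_c$, and Nakayama's lemma then makes $a(\K X.+B)$ Cartier over $C\setminus\{o\}$. Relative effective basepoint-freeness \cite[Theorem~2.2.4]{Fuj09} now gives freeness of $|-bn(\K X.+B)|$ over $C\setminus\{o\}$ only, for a bounded $b$. Choosing the complement generally in this system makes $(X,B^+)$ klt over $C\setminus\{o\}$; since $(X,B^+)$ is globally lc, all lc centers lie in $X_o$, and subtracting the reduced Cartier divisor $X_o$ then makes the pair klt everywhere near $o$. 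So the repair to your argument is to seek basepoint-freeness over $C\setminus\{o\}$, obtained via boundedness of the generic fiber, rather than over $o$ itself.
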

\begin{proof}
Fix a closed point $o \in C$. Up to shrinking $C$ around $o$, we may assume that $(X_c,B_c)$ is $\epsilon$-log canonical for all $c \neq o$. Since $X$ is of Fano-type over $C$, there exists a boundary $\Delta \geq 0$ such that $(X,\Delta)$ is klt and $-(\K X. + \Delta)$ is nef and big over $C$. Fix a rational number $0 < \alpha \ll 1$. Then $(X,B+\alpha \Delta)$ is $\frac{\epsilon}{2}$-log canonical, and $-(\K X. + B + \alpha \Delta)$ is nef and big over $C$. Thus, up to shrinking $C$, by Lemma \ref{generalized BAB}, the varieties $X_c$ with $c \neq o$ are of Fano-type and belong to a bounded family $\mathcal{P}$ depending just on the data in the statement.

Thus, by \cite[Lemma 2.25]{Bir16a}, there is a positive integer $a$ only depending on the data in the statement such that $a(\K X_c. + B_c)$ is Cartier for $c \neq o$. Then, by Nakayama's lemma, the Weil divisor $a(\K X. + B)$ is Cartier over $C \setminus \lbrace o \rbrace$. Since $X$ is of Fano-type over $C$, by the relative effective basepoint-free theorem \cite[Theorem 2.2.4]{Fuj09}, there exists a positive integer $b$, divisible by $a$ and depending only on the data in the statement, such that $-kb(\K X. + B)$ is $\pi$-free over $C \setminus \lbrace o \rbrace$ for any positive integer $k$. In particular, this implies the existence of bounded klt complements of
$(X,B)$ over the generic point of $C$.

Now, we are left with showing the existence of a suitable complement over the fixed closed point $o \in C$. By inversion of adjunction \cite[Theorem 0.1]{Hac14}, the pair $(X,B+X_o)$ is log canonical. Thus, by \cite[Theorem 1.8]{Bir16a}, it admits a strong $(0,n)$-complement over $o$, where $n$ depends just on the data in the statement. In particular, we can regard it as a strong $(0,nb)$-complement. By Proposition \ref{replace complement in linear series}, we can pick the complement $B^+$ such that
$$
bn(B^+-B) \in |-nb(\K X. + B + X_o)|
$$
is a general element. Notice that, since $C$ is affine, $nb(\K X. + B) \sim nb (\K X. + B + X_o)$. In particular, as $-bn(\K X. + B)$ is $\pi$-free over $C \setminus \lbrace o \rbrace$, we may assume that $(X,B^+)$ is klt over $C \setminus \lbrace o \rbrace$.

Now, $B^+$ is a strong $(0,nb)$-complement for $(X,B)$. By construction, $B^+ - B \geq X_o$. As $X_o \sim_C 0$, the boundary $B' \coloneqq B^+ - X_o$ is a strong $(0,nb)$-complement for $(X,B)$. As the only log canonical centers of $(X,B^+)$ are contained in $X_o$, it follows that $(X,B')$ is klt. Thus, $B'$ is a strong $((nb)\sups -1.,nb)$-complement for $(X,B)$ over $o$.
\end{proof}

\begin{proposition} \label{log canonical closure}
Let $f \colon (X_U,\Delta_U) \rar U$ be a contraction of quasi-projective varieties, $z \in U$ be a closed point and $n$ a positive integer. Assume that $(X_U,\Delta_U)$ is log canonical, $\K X_U. + \Delta_U \sim \subs \qq, U. 0$ and $n\Delta_U$ is integral. Denote by $(U,B_U+M_U)$ the generalized pair induced on $U$ by the canonical bundle formula. Then, we may find a normal projective compactification $Z$ of $U$ and a projective log canonical pair $(X,\Delta) \rar Z$ such that $(X,\Delta) \times_Z U$ is a minimal dlt model of $(X_U,\Delta_U)$, $\K X. + \Delta \sim \subs \qq , Z. 0$, $n\Delta$ is integral and the generic point of each non-klt center of $(X,\Delta)$ maps into $U$. In particular, $(X,\Delta)$ and $(X_U,\Delta_U)$ induce the same generalized pair on $U$.
\end{proposition}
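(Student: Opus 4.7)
The plan is to construct $(X,\Delta)$ in three stages: replace $(X_U,\Delta_U)$ by a minimal dlt modification so that no further MMP is needed on the $U$-part, compactify the total space over $Z$, and then adjust the boundary on the vertical fibers over $Z\setminus U$ so that $K_X+\Delta\sim_{Z,\qq}0$ while the coefficients stay in $[0,1]\cap\frac{1}{n}\zz$. First, I would apply the existence of minimal dlt modifications for lc pairs to obtain $\pi_U\colon Y_U\to X_U$ with $(Y_U,\Delta_{Y_U})$ $\qq$-factorial dlt and $K_{Y_U}+\Delta_{Y_U}=\pi_U^*(K_{X_U}+\Delta_U)$. Since the $\pi_U$-extracted components have coefficient $1$ in $\Delta_{Y_U}$, the integrality of $n\Delta_{Y_U}$ is preserved, and by the crepant compatibility of the canonical bundle formula the generalized pair induced on $U$ by $(Y_U,\Delta_{Y_U})$ coincides with the one induced by $(X_U,\Delta_U)$. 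Hence it suffices to produce the desired $(X,\Delta)$ for $(Y_U,\Delta_{Y_U})$.

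Next, by Nagata's compactification theorem I would extend $Y_U\to U$ to a projective morphism $\bar g\colon\bar Y\to Z$ with $\bar Y$ normal and projective, and take a log resolution $\rho\colon Y'\to\bar Y$ that is an isomorphism over $Y_U$. Denote by $\tilde\Delta_Y$ the strict transform of the closure of $\Delta_{Y_U}$, and by $E$ the reduced divisor on $Y'$ supported on $\rho^{-1}(\bar Y\setminus Y_U)$. Fix $L_U$ on $U$ with $K_{Y_U}+\Delta_{Y_U}\sim_\qq g^*L_U$ (where $g\colon Y_U\to U$ is the induced morphism) and extend $L_U$ to a $\qq$-Cartier divisor $L$ on $Z$, passing to a small $\qq$-factorialization of $Z$ if needed and then descending. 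The divisor $K_{Y'}+\tilde\Delta_Y-(\bar g\circ\rho)^*L$ is $\qq$-linearly trivial over $U$, hence $\qq$-linearly equivalent to a divisor $\sum_F g_F F$ supported on $E$. By modifying $L$ by a $\qq$-Cartier divisor on $Z$ supported on $Z\setminus U$, and, if necessary, by performing further toroidal blow-ups of $Y'$ over $Z\setminus U$ in order to refine the fiber multiplicities, I would choose rational coefficients $a_F\in[0,1]\cap\frac{1}{n}\zz$ such that $\sum_F a_FF\sim_{Z,\qq}-\sum_F g_FF$. Setting $\Delta\coloneqq\tilde\Delta_Y+\sum_F a_FF$, the pair $(Y',\Delta)$ is log canonical with $n\Delta$ integral and $K_{Y'}+\Delta\sim_{Z,\qq}0$; and because $\rho$ is an isomorphism over $Y_U$ while the adjustment only involves components mapping into $Z\setminus U$, we have $\Delta|_{Y'_U}=\Delta_{Y_U}$. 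Taking $(X,\Delta)\coloneqq(Y',\Delta)$ completes the construction.

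The hard step will be choosing the vertical coefficients $a_F$: their existence reduces to a linear-algebraic feasibility problem on the dual complex of the vertical divisors over $Z\setminus U$, since the freedom to shift $L$ only translates the $g_F$'s by fiber-wise common multiples of the multiplicities $m_F$ of the components of $\rho^*S$ for $S$ a codimension-one component of $Z\setminus U$. To ensure that the resulting intervals $[g_F/m_F,(g_F+1)/m_F]$ have non-empty intersection within each fiber and additionally contain a rational number in $\frac{1}{n}\zz$, one in general must perform further toroidal blow-ups of $Y'$ along vertical centers mapping into $Z\setminus U$, thereby introducing new components of $E$ with smaller multiplicities. Once this combinatorial adjustment is carried out, verification of the remaining properties (log canonicity, integrality of $n\Delta$, the $\qq$-linear equivalence over $Z$, and the identification of the restriction over $U$ with the minimal dlt model) is a direct check, and the final claim about the two pairs inducing the same generalized pair on $U$ follows from the construction, since $(X,\Delta)\times_Z U=(Y_U,\Delta_{Y_U})$ and the canonical bundle formula is compatible with the crepant map $\pi_U$.
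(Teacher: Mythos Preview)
Your proposal takes a route quite different from the paper's, and the ``hard step'' you flag is a genuine gap rather than a routine verification. The paper does not attempt to choose vertical coefficients by hand. Instead, it first invokes \cite[Corollary 1.2]{HX13} to obtain \emph{some} projective log canonical compactification $(X,\Delta)\to Z$ restricting to $(X_U,\Delta_U)$, then passes to a minimal dlt model and \emph{deletes} from the boundary all components mapping into $Z\setminus U$. At this point $n\Delta'$ is integral (the deleted components had coefficient $1$, and nothing new was added), and every lc center of $(X',\Delta')$ dominates a point of $U$, where $\K X'.+\Delta'$ is already a good minimal model. This is exactly the hypothesis of \cite[Theorem 1.6]{HX13}, so a $(\K X'.+\Delta')$-MMP with scaling over $Z$ terminates with a good minimal model $(X'',\Delta'')$; since the Iitaka fibration is generically trivial over $Z$, this forces $\K X''.+\Delta''\sim_{Z,\qq}0$. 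The MMP is the identity over $U$ and does not change coefficients, so $n\Delta''$ stays integral and the restriction over $U$ is the minimal dlt model.

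By contrast, your construction tries to produce $\K {Y'}.+\Delta\sim_{Z,\qq}0$ directly by solving for coefficients $a_F\in[0,1]\cap\frac{1}{n}\zz$ on the vertical divisors. Over each codimension-one point $P\subset Z\setminus U$ this amounts to finding $c\in\qq$ with $g_F/m_F\le c\le (g_F+1)/m_F$ simultaneously for all components $F$ over $P$, and in addition with $cm_F-g_F\in\frac{1}{n}\zz$. Neither the non-emptiness of these intersections nor the existence of a point with the required denominator is automatic, shifting $L$ does not help (it translates all $g_F/m_F$ by the same amount), and ``further toroidal blow-ups'' do not obviously resolve the obstruction when $\dim Z\ge 2$: you are essentially rediscovering the need for semi-stable reduction without base change. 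The feasibility you are asserting is precisely what the Hacon--Xu good-minimal-model theorem supplies, and that is why the paper invokes it rather than arguing combinatorially. A secondary issue: you ask for a log resolution $\rho\colon Y'\to\bar Y$ that is an isomorphism over the dlt (not log smooth) locus $Y_U$; such a $\rho$ need not exist, and if $\rho$ modifies $Y_U$ then your identification $\Delta|_{Y'_U}=\Delta_{Y_U}$ fails.
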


\begin{proof}
Let $Z'$ be a normal projective compactification of $U$. By \cite[Corollary 1.2]{HX13}, there exists a log canonical pair $(X,\Delta)$ mapping to $Z'$ such that $(X,\Delta) \times_{Z'} U = (X_U,\Delta_U)$.
Let $(X',\Delta')$ be a birational model of $X$ obtained taking a minimal dlt model of $(X,\Delta)$ and then removing the components of the boundary mapping to $Z' \setminus U$. In particular, $n\Delta'$ is integral and the generic point of every log canonical center of $(X',\Delta')$ maps to $U$.

By \cite[Theorem 1.6]{HX13}, we can run a $(\K X'. + \Delta')$-MMP with scaling over $Z'$, and it terminates with a good minimal model $(X'',\Delta'')$. Let $Z$ denote the canonical model. In particular, we have $\K X''. + \Delta'' \sim \subs \qq , Z. 0$. By construction, this MMP is the identity over $U$. Thus, since $(X'',\Delta'')$ is isomorphic to $(X',\Delta')$ over $U$, $(X'',\Delta'') \times_Z U$ is a minimal dlt model of $(X_U,\Delta_U)$. This is the model claimed.
\end{proof}

\begin{proof}[Proof of Theorem~\ref{localen}]
We will proceed by induction on the dimension of the base $Z$. The base of the induction is given by Lemma \ref{semistable over curve case}. Thus, we may assume that $\dim Z \geq 2$.

First, we reduce to the case when $z$ is a closed point.
Let $H$ be a general very ample divisor on $Z$, and write $X_H \coloneqq \pi^*H$. Since $X_H$ is a general element of a basepoint-free linear series, it follows that $(X_H,B_H)$ is still $\epsilon$-log canonical, $\coeff(B_H) \subset \Lambda$ and $X_H$ is of Fano-type over $H$. Furthermore, $mK_H$ is Cartier \cite[Proposition 4.5.(3)]{Kol13}. Thus, the morphism $\pi_H  \colon  (X_H,B_H) \rar H$ satisfies the same assumptions in the statement.

Fix $z \in Z$, and assume $\dim \O Z,z. \leq n-1$. Then, $\overline{\lbrace z \rbrace} \cap H \neq \emptyset$. Let $z_H$ be the generic point of an irreducible component of $\overline{\lbrace z \rbrace} \cap H$. Then, by induction on the dimension of the base, $(X_H,B_H)$ admits a strong $(\delta,n)$-complement $B_H^+$ over $z_H$, where $\delta > 0$ and $n \in \nn$ depend just on $d$, $\epsilon$, $m$ and $\Lambda$. Without loss of generality, we may assume that $n\cdot \Lambda \subset \nn$. Notice that, as $\Lambda$ is a finite set, by the construction in Proposition \ref{replace complement in linear series}, we may identify $n(B_H^+-B_H)$ with an element of $|-n(\K X_H . + B_H)|$ over $z_H$.

Now, we may shrink $Z$, and assume that it is affine. Therefore, we have $H \sim 0$. This implies that there is a non-canonical isomorphism $\O X. (\K X.)| \subs X_H. \cong \O X_H. (\K X_H.)$. Thus, by twisting the short exact sequence
$$
0 \rar \O X.(-f^*H) \rar \O X. \rar \O X_H. \rar 0,
$$
with $\mathcal{O}_X(-n(K_X+B))$ we obtain the short exact sequence
\begin{equation} \label{ses lift complement}
0 \rar \O X.(-n(\K X. + B)-f^*H) \rar \O X. (-n(\K X. + B)) \rar \O X_H. (-n(\K X_H. + B_H))\rar 0.
\end{equation}
Notice that the sequence in \eqref{ses lift complement} remains exact, since $\O X.(-n(\K X. + B)-f^*H)$ is torsion-free.

By \cite[Theorem 4.4]{CU15}, we have $R^1 f_* \O X. (-n(\K X. + B))=0$. Therefore, the morphism $f_* \O X. (-n(\K X. + B)) \rar f_* \O X_H. (-n(\K X_H. + B_H))$ is surjective. Hence, we may lift local sections of  $\O X_H. (-n(\K X_H. + B_H))$ to local sections of $\O X. (-n(\K X. + B))$. In particular, there exists $B^+$ such that $n(B^+-B)$ restricts to $n(B_H^+-B_H)$. This is equivalent to saying that $(\K X. + B^+ + f^*H)| \subs X_H. = \K X_H. + B_H^+$. Since $(X_H,B_H^+)$ is klt, by inversion of adjunction \cite[Theorem 5.50]{KM98}, $(X,B^+ + f^* H)$ is plt in a neighborhood of $f^*H$. Notice that, since $\overline{\lbrace z \rbrace} \cap H \neq \emptyset$, this holds true over a neighborhood of $z$. Thus, as $f^*H$ is Cartier, we conclude that $B^+$ is a strong $(n \sups -1., n)$-complement for $(X,B)$ over $z$.

Now, we turn to treat the case when $z$ is a closed point arguing by contradiction.
Let $B^+$ be a $(0,n)$-complement for $(X,B)$ over $z$, where $n$ depends only on $\Lambda$ and $d$ \cite[Theorem 1.8]{Bir16a}. Up to replacing $n$ by a bounded multiple depending only on $\epsilon$, by Proposition \ref{replace complement in linear series}, we may assume that $(X,B^+)$ is klt over the generic point of $Z$. Therefore, there are finitely many closed subvarieties $Y_1, \ldots , Y_k \subset Z$ such that $z \in \cap \subs i=1.^k Y_i$ and $(X, B^+)$ is klt over $Z \setminus \cup \subs i = 1. \sups k. Y_i$. Fix $i$. By the first part of the proof, for a general choice of $F \in |-n(\K X. + B)|$, $(X,B+\frac{F}{n})$ is klt over the generic point of $Y_i$. Since we are intersecting finitely many conditions, namely being klt over $Z \setminus \cup \subs i = 1. \sups k. Y_i$ and being klt over $\eta \subs Y_i.$ for all $i$, by Proposition \ref{replace complement in linear series} we may assume that $(X,B^+)$ is klt over $Z \setminus \lbrace z \rbrace$. Thus, there exists an open neighborhood $U \subset Z$ of $z$ such that $(X,B^+)$ is klt over $U \setminus \lbrace z \rbrace$.

Notice that the existence of the complement over $z$ is local in nature. Thus, up to shrinking $Z$ and $X$ accordingly, we may assume that $(X,B^+)$ is log canonical, $\K X. + B^+ \sim \subs \qq , Z. 0$ and $nB^+$ is integral. 
Since $(X,B^+)$ is klt over the generic point of $Z$, the canonical bundle formula induces a generalized polarized pair $(Z,B_Z+M_Z)$ on $Z$ \cite[Theorem 3.6]{Bir16a}.
Then, by Proposition \ref{log canonical closure} and by \cite[Proposition 6.3]{Bir16a}, we have control over the coefficients of $B_Z$ and on the Cartier index of the moduli part. More precisely, $B_Z$ has coefficients in a set of rational numbers satisfying the descending chain condition, just depending on $\Lambda$ and $d$. Similarly, the Cartier index of the moduli part $M \subs Z'.$ descending on a higher model is a bounded function of $\Lambda$ and $d$. Notice that $(Z,B_Z + M_Z)$ is generalized log canonical, generalized klt on $U \setminus \lbrace z \rbrace$ and $\lbrace z \rbrace$ is a generalized log canonical center \cite[Proposition 4.16]{Fil18}.

Now, we regard $(Z,B_Z+M_Z)$ as being relatively of Fano-type over itself. Then, by Theorem \ref{localconj0n} \cite[cf. Corollary 1.9]{Bir16a},  $l(K_Z+B_Z+M_Z)$ is Cartier around $z$, where $l$ is a positive integer depending just on $d$ and $\Lambda$. Up to taking a multiple depending on the data of the problem, we may assume that $m|l$. Thus, we have that $l(B_Z+M_Z)$ is Cartier around $z$.

Consider $t \coloneqq \glct (X,B | f^*(B_Z+ M_Z))$. Since $(X,B)$ is $\epsilon$-log canonical with $\epsilon>0$, we have $t > 0$. We will show that $t=1$. Let $\pi \colon  Z' \rar Z$ be a log resolution of $(Z,B_Z)$ where $M_{Z'}$ descends. Define $X' \coloneqq X \times_Z Z'$, and let $g \colon  X' \rar Z'$ and $\psi \colon  X' \rar X$ be the induced morphisms. By Proposition \ref{base change normal}, $X'$ is a normal variety.

Since $\Supp(B)$ contains no fibers, we have $\psi^*(\K X. + B) = \psi^* \K X. + \psi_* \sups -1. B$. Write $B' \coloneqq \psi_* \sups -1. B$. Recall that $\psi^* \K X/Z. = \K X'/Z'.$. Thus, we have the following chain of equalities
\begin{equation} \label{equation base change equalities}
    \begin{split}
        \psi^*(\K X. + B + f^*(B_Z+M_Z)) & = \psi^*(\K X/Z. + B) + \psi^* f^*(\K Z. + B_Z+M_Z)\\
        &=\K X'/Z'. + B' + g^*(\K Z'. + B \subs Z'. + M \subs Z'.)\\
        &=\K X'. + B' + g^*(B \subs Z'. + M \subs Z'.).
    \end{split}
\end{equation}

Hence, $(X,B+f^*(B_Z+M_Z))$ is generalized log canonical if and only if so is $(X',B'+g^*(B \subs Z'. + M \subs Z'.))$. Since $M \subs Z'.$ descends on $Z'$, it does not contribute to the singularities of $(X',B'+g^*(B \subs Z'. + M \subs Z'.))$. Also, since $(Z,B_Z+M_Z)$ is generalized log canonical, we have $B \subs Z'. \leq \Supp (B \subs Z'.)$. Since $\Supp (B \subs Z'.)$ is simple normal crossing, by repeated inversion of adjunction \cite{Hac14}, $(X',B'+g^* \Supp(B \subs Z'.))$ is log canonical. Thus, we conclude that  $(X,B+f^*(B_Z+M_Z))$ is generalized log canonical. In particular, we have $t \geq 1$. On the other hand, since $(Z,B_Z + M_Z)$ is not generalized klt at $z$, there exists a prime divisor $P' \subset Z'$ mapping to $z$ such that $\mult \subs P'. B \subs Z'. =1$. Therefore, we have $t=1$.

By construction, $(X,B+f^*(B_Z+M_Z))$ is generalized klt over $U \setminus \lbrace z \rbrace$ and $X_z$ is a generalized log canonical center. Now, up to replacing $n$ by a bounded multiple depending only on the data in the statement, by Theorem \ref{localconj0n} $(X,B+f^*(B_Z+M_Z))$ admits a strong $(0,n)$-complement over $z$. Call it $\Gamma$. Then, as $l f^*(B_Z+M_Z)$ is Cartier over a neighborhood of $z$, $\Gamma$ is a strong $(0,n)$-complement for $(X,B)$ over $z$. By Proposition \ref{replace complement in linear series} and the previous discussion, up to shrinking $U$, we may assume that $(X,\Gamma)$ is klt over $U \setminus \lbrace z \rbrace$. 
Note that $(X,\Gamma)$ is log canonical over $U$
and klt over $U\setminus \lbrace z \rbrace$.

Assume that $(X,\Gamma)$ is klt over $U$.
Then, by Remark~\ref{rmk_klt_complement}, as $n(K_X+\Gamma)\sim_Z 0$, it follows that $(X,\Gamma)$ is a
$\left( \frac{1}{n},n\right)$-complement over $z\in Z$.
In particular, we may take $\delta=\frac{1}{2n}$, and
$(X,\Gamma)$ is $\delta$-log canonical.
Thus, to conclude, it suffices to show that $(X,\Gamma)$ is klt.

We claim that $(X,\Gamma)$ is klt over $U$.
Let $\rho  \colon  Z'' \rar Z$ be a generalized dlt model for $(Z,B_Z+M_Z)$ \cite[Corollary 3.4]{Fil18}. Define $X'' \coloneqq X \times_Z Z''$, which is normal by Proposition \ref{base change normal}, and write $h \colon  X'' \rar Z''$ and $\phi  \colon  X'' \rar X$ for the induced morphisms. By construction $\rho$ is an isomorphism over $U \setminus \lbrace z \rbrace$. Let $P_1, \ldots, P_k$ be the $\rho$-exceptional divisors mapping to $z$. Notice that $k \geq 1$, as $(Z,B_Z+M_Z)$ is not generalized dlt at $z$. Define $Q_i \coloneqq h^* P_i$ for $1 \leq i \leq k$. By construction, $\mult \subs P_i. B \subs Z''.=1$

Now, let $(X'',\Gamma'')$ be the log pull-back of $(X,\Gamma)$ to $X''$. Assume by contradiction that $(X,\Gamma)$ is not klt over $z$. Then, the sub-pair $(X'',\Gamma'')$ has a log canonical center $V''$ with $h(V'') \subset P_i$ for some $1 \leq i \leq k$. Then, $(X'',\Gamma'' + \sigma Q_i)$ is not log canonical for any $\sigma > 0$. On the other hand, by the argument in equation \eqref{equation base change equalities}, $\mult \subs Q_i. (B'' + g^* B \subs Z''.) = 1$. Since $\Gamma$ is a complement for $(X,B+f^*(B_Z+M_Z))$ over $z$, it follows that $(X'',\Gamma'' + Q_i)$ is log canonical. This provides the required contradiction.
\end{proof}

\section{Applications}

In this section, we will use the existence of log canonical complements to prove an effective version of the generalized canonical bundle formula~\ref{genbundfor}. Then, we will prove that the existence of klt complements implies M$^{\rm c}$Kernan's conjecture. 

\begin{lemma}\label{ftovercurve}
Let $(X,B+M)$ be a generalized log canonical pair and $f\colon X \rightarrow C$
be a contraction onto a smooth curve. Assume that $X$ is of Fano-type over some non-empty open set $U\subset C$. Moreover, assume that $K_X+B+M \sim_{\qq,C} 0$, and that the generic point of each generalized non-klt center of $(X,B+M)$ is mapped into $U$. Then $X$ is of Fano-type over $C$.
\end{lemma}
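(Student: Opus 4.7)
My plan is to produce a klt $\mathbb{Q}$-boundary $\Delta$ on $X$ with $-(K_X+\Delta)$ big and nef over $C$. I will construct $\Delta$ by combining two ingredients: the Fano-type boundary over $U$, and a plain effective representative of the moduli part $M$ obtained via the base-point-free theorem on the Fano-type locus.

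First, since $X$ is of Fano type over $U$, there is a klt $\mathbb{Q}$-boundary $\Gamma_U$ on $X_U$ with $-(K_{X_U}+\Gamma_U)$ big and nef over $U$; by Kodaira's trick and a small perturbation, I may assume $-(K_{X_U}+\Gamma_U)$ is ample over $U$. Since $M|_{X_U}$ is nef and $X_U$ is of Fano type over $U$, the relative base-point-free theorem yields an effective $N_U \sim_{\mathbb{Q},U} M|_{X_U}$; choosing $N_U$ generally, the plain pair $(X_U, B|_{X_U}+N_U)$ inherits the log canonical singularities of the generalized restriction $(X_U,(B+M)|_{X_U})$. I would extend $N_U$ to an effective divisor $N$ on $X$ by taking the closure and adding effective vertical divisors supported over $C\setminus U$: since $C$ is a smooth curve and every fiber over $C\setminus U$ is $f$-trivial locally, I can adjust by effective multiples of fibers to arrange $M\sim_{\mathbb{Q},C} N$ with $N\geq 0$.

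Next, I would argue that $(X, B+N)$ is log canonical globally. Over $U$, this follows from the Bertini-type choice of $N_U$. Over $C\setminus U$, the hypothesis on non-klt centers means $(X,B+M)$ is generalized klt there, and on a log resolution $\pi\colon X'\to X$ where $M'$ descends, the exceptional effective divisor $\pi^*M-M'\geq 0$ implies the log discrepancies of the plain pair $(X,B)$ dominate those of the generalized pair; so $(X,B)$ is klt over $C\setminus U$, and keeping the vertical contribution of $N$ there small by further adjustment preserves log canonicity. Finally, for small $\epsilon>0$, set
\[
\Delta \coloneqq (1-\epsilon)(B+N)+\epsilon \Gamma,
\]
where $\Gamma$ is an effective extension of $\Gamma_U$ to $X$. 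Then $(X,\Delta)$ is klt (over $U$ from $\Gamma$, over $C\setminus U$ from $B$), and $-(K_X+\Delta)\sim_{\mathbb{Q},C}\epsilon(-(K_X+\Gamma))$ is ample over $U$, giving bigness over $C$.

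The main obstacle is verifying that $-(K_X+\Delta)$ is nef globally, not merely over $U$: the extension $\Gamma$ is arbitrary outside $U$ and may fail to be nef there. I plan to address this by exploiting that every $f$-vertical divisor supported on a fiber over a point $p\in C\setminus U$ is $\mathbb{Q}$-linearly trivial in a neighborhood of $p$, so I can perturb $\Gamma$ by small effective multiples of such fibers to correct for any negative intersections with vertical curves over $C\setminus U$ without destroying klt-ness of $(X,\Delta)$. Alternatively, one may replace $X$ by a relative minimal model for $-(K_X+\Delta)$ over $C$, which exists and preserves the Fano-type property over $U$ by finite generation; the contraction is the identity over $U$, and termination relies crucially on the curve hypothesis on $C$. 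This technical step, tying the Fano-type boundary over $U$ to a genuinely nef anti-canonical over all of $C$, is the crux of the argument.
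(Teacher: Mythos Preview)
Your proposal correctly identifies the crux of the argument---extending the Fano-type structure from $U$ to all of $C$---but neither of your two proposed fixes for the acknowledged obstacle actually works.

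For fix (a): the assertion that ``every $f$-vertical divisor supported on a fiber over a point $p\in C\setminus U$ is $\qq$-linearly trivial in a neighborhood of $p$'' is false when the fiber $f^*p$ is reducible. Only the \emph{full} fiber is $\qq$-linearly trivial over $C$; its individual components are not. Adding a multiple of the full fiber $f^*p$ does nothing to the intersection numbers with curves contained in that fiber, so it cannot repair negative intersections of $-(K_X+\Gamma)$ with such curves.

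For fix (b): running a $-(K_X+\Delta)$-MMP over $C$ with termination is essentially what being of Fano type over $C$ gives you, so invoking it here is circular. Rewriting this as a $(K_X+\Theta)$-MMP for some klt big boundary $\Theta$ is not straightforward without already knowing something like Fano type.

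The paper sidesteps the obstacle by a different normalization of $\Gamma$. Instead of asking for $-(K_X+\Gamma)$ ample over $U$, one takes $\Gamma\geq 0$ big over $U$ with $(X_U,\Gamma|_{X_U})$ klt and $K_X+\Gamma\sim_{\qq,U}0$; this exists because $X$ is of Fano type over $U$. Since $C$ is a curve, one may write $K_X+\Gamma\sim_{\qq,C}D$ with $D$ supported on fibers over $C\setminus U$, and after subtracting multiples of full fibers one arranges $D\leq 0$. Then $\Gamma-D\geq 0$ and $K_X+(\Gamma-D)\sim_{\qq,C}0$. For small $t>0$ the generalized pair with boundary $(1-t)B+t(\Gamma-D)$ and moduli $(1-t)M$ is $\qq$-trivial over $C$ and generalized klt: over $C\setminus U$ this uses the hypothesis that $(X,B+M)$ is generalized klt there, and over $U$ it uses klt-ness of $(X_U,\Gamma|_{X_U})$. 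The boundary part is big over $C$, and the moduli part $(1-t)M$ is then absorbed into an honest effective divisor using an ample summand of that big boundary. The key point is that the global relation $K_X+B+M\sim_{\qq,C}0$ serves as the anchor making everything automatically trivial over $C$; your argument never fully exploits this.

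There is also a secondary tension in your construction of $N$: the three requirements $N\sim_{\qq,C}M$, $N\geq 0$, and ``vertical part of $N$ small over $C\setminus U$'' cannot in general be met simultaneously, so the claim that $(X,B+N)$ is log canonical over $C\setminus U$ is not justified.
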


\begin{proof}
Since $X$ is of Fano-type over some non-empty open set $U\subset C$, we can find a boundary $\Gamma$ on $X$ so that $\Gamma$ is big over $U$ and 
$K_X+\Gamma \sim_{\qq,U} 0$ (see, e.g.~\cite[2.10]{Bir16a}).
Hence, since $Z$ is a curve, we may find $D\leq 0$ so that 
$K_X+\Gamma \sim_{\qq,C} D$ and $D$ is mapped to $C\setminus U$.
Since the generic point of each generalized non-klt center of $(X,B+M)$ is mapped into $U$, we conclude that for $t$ small enough the generalized pair 
\[
(X, (1-t)B+ t(\Gamma - D)+(1-t)M)
\]
is generalized klt, and 
\[
K_X+(1-t)B+t(\Gamma - D)) +(1-t)M \sim_{\qq,C} 0.
\]
Now, we reduce to the case when $X$ is $\qq$-factorial.
Indeed, as $(X, (1-t)B+ t(\Gamma - D)+(1-t)M)$ is generalized klt, a generalized dlt modification for it coincides with a $\qq$-factorialization $X^\qq$ of $X$ (see,~\cite[\S 4]{BZ16} or~\cite[\S 3]{Fil18}).
Then, by \cite[2.10]{Bir16a}, $X$ is of Fano-type over $C$ if and only if so is $X^\qq$.
Thus, in the rest of the proof we may assume that $X$ is $\qq$-factorial.

Observe that the boundary divisor 
$\Delta = (1-t)B+t(\Gamma-D)$ is big over $C$.
Write $\Delta\sim_{\qq,C} A+E$, where $E$ is effective and $A$ is an effective ample divisor  over $C$.
We can find $\epsilon$ small enough so that $0\leq\Omega \sim_\qq (1-t)M +\epsilon A$ is a boundary such that
$(X, (1-\epsilon)\Delta + \epsilon E + \Omega)$ is a klt pair with big boundary and
\begin{equation} \label{eq_fix}
K_X+(1-\epsilon)\Delta+\epsilon E+\Omega\sim_{\qq,C} 0.
\end{equation}
Indeed, by construction, we have
\[
K_X+(1-\epsilon)\Delta+\epsilon E+\Omega\sim_{\qq,C} K_X+(1-t)B+t(\Gamma - D)) +(1-t)M,
\]
so \eqref{eq_fix} follows.
Since $(X,(1-t)B+t(\Gamma-D)+(1-t)M)$ is generalized klt and $\Delta=(1-t)B+t(\Gamma-D)$, it it follows that $(X,\Delta+(1-t)M)$ is generalized klt.
Then, as $X$ is $\qq$-factorial, $(X,(1-\epsilon)\Delta)$ is a klt pair.
Since $0 < \epsilon \ll 1$, it follows that $(X,(1-\epsilon)\Delta+\epsilon E)$ is klt.
Finally, let $\pi \colon X' \rar X$ be a model where $M'$ descends.
Let $(X',(1-\epsilon)\Delta')$ denote the trace of $(X,(1-\epsilon)\Delta)$ on $X'$.
Notice that $\Delta'$ may not be effective.
Then, $(X',(1-\epsilon)\Delta'+\epsilon \pi^* E)$ is a klt sub-pair.
Since $M'$ is nef and $\pi^*A$ is nef and big over $C$, then $(1-t)M'+\epsilon \pi^*A$ is nef and big over $C$.
Therefore, there exists an effective $\mathbb{Q}$-divisor $F'$ on $X'$ such that
\[
(1-t)M'+\epsilon \pi^*A \sim_{\mathbb{Q},C} A'_k + F'_k
\]
for all positive integers $k$, where $A'_k$ is an ample $\mathbb{Q}$-divisor and $F'_k \coloneqq \frac{F'}{k}$ \cite[Example 2.2.19]{Laz04a}.
Since $(X',(1-\epsilon)\Delta'+\epsilon \pi^* E)$ is sub-klt, if we choose $A'_k$ generically in its $\mathbb{Q}$-linear equivalence class and $k \gg 1$, the sub-pair $(X',(1-\epsilon)\Delta'+\epsilon \pi^* E + A'_k + F'_k)$ is still sub-klt.
Thus, we define $\Omega \coloneqq \pi_*(A'_k + F'_k)$.
Then, by construction, the pair $(X,(1-\epsilon)\Delta + \epsilon E + \Omega)$ is klt.
Therefore, by~\cite[Lemma 2.11]{Bir16a}, we conclude that $X$ is of Fano-type over $C$.
\end{proof}

\begin{lemma}\label{coeffdivadj}
Let $p$ be a natural number and $\Lambda \subset \qq$ a set satisfying the descending chain condition with rational accumulation points.
Then, there exists a set $\Omega \subset \qq$ satisfying the descending chain condition 
with rational accumulation points, only depending on $p$ and $\Lambda$, satisfying the following.
Let $(X,B+M)$ be a generalized dlt pair and
$S$ an irreducible component of $\lfloor B\rfloor$.
If ${\rm coeff}(B)\subset \Lambda$, $pM'$ is Cartier, and $K_S+B_S+M_S=(K_X+B+M)|_S$ is defined by generalized divisorial adjunction, then ${\rm coeff}(B_S)\subset \Omega$.
\end{lemma}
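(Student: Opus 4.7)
The plan is to pin down the shape of $\coeff_D B_S$ explicitly by running generalized divisorial adjunction on a log resolution where $M'$ descends, write down a combinatorial set $\Omega$ that contains all such coefficients, and then check that $\Omega$ inherits both the DCC property and the rationality of its accumulation points from $\Lambda$. This follows the template of \cite[Lemma 3.3]{Bir16a}, where the same conclusion is obtained when $\Lambda$ is a hyperstandard set; the geometric computation is identical in the general DCC case, only the combinatorial verification changes.

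Concretely, fix a log resolution $\pi \colon X' \to X$ of $(X,B)$ on which $M'$ descends, let $S' \subset X'$ be the strict transform of $S$, and restrict $K_{X'} + B' + M' = \pi^*(K_X + B + M)$ to $S'$. A local analysis at the generic point of a prime divisor $D \subset S$ (as in \cite[Lemma 3.3]{Bir16a} and \cite{Fil18}) produces a formula
\[
\coeff_D B_S = 1 - \frac{1 - \sum_j n_j b_j - k/p}{m},
\]
where $m \in \nn$, the $b_j \in \Lambda$ are coefficients of components of $B - S$, the $n_j \in \nn \cup \{0\}$ are the corresponding multiplicities, and $k \in \nn \cup \{0\}$ records the contribution of the Cartier divisor $pM'$. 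Setting
\[
\Omega \coloneqq \left\{ 1 - \frac{1 - f}{m} \in [0,1] : m \in \nn,\ f = \sum_j n_j b_j + k/p \in [0,1],\ n_j, k \in \nn \cup \{0\},\ b_j \in \Lambda \right\},
\]
gives a set depending only on $p$ and $\Lambda$ that contains $\coeff(B_S)$.

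It remains to show that $\Omega$ is DCC and that $\overline{\Omega} \subset \qq$. Denote by $\Lambda_+ \subset [0,1]$ the set of admissible values of $f$. Because $\Lambda$ is DCC, $\Lambda \setminus \{0\}$ is bounded away from zero, so combined with $f \leq 1$ the parameters $k$, each $n_j$, and the number of non-zero summands are uniformly bounded. Using that every sequence in a DCC totally ordered set admits a non-decreasing subsequence, a diagonal extraction then shows that $\Lambda_+$ is DCC and that $\overline{\Lambda_+} \subset \qq$ whenever $\overline{\Lambda} \subset \qq$. Passing to $\Omega$: any strictly decreasing sequence $b_n = 1 - (1 - f_n)/m_n$ forces $m_n$ bounded (else $b_n \to 1$, contradicting strict decrease below $b_1 < 1$), after which $m_n$ may be taken constant and $f_n$ would have to be strictly decreasing in $\Lambda_+$, a contradiction; the accumulation points of $\Omega$ are handled by the same reduction.

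The main technical point is the clean separation of the $\Lambda$-contribution from the $p^{-1}\zz$-contribution in the formula for $\coeff_D B_S$; this relies crucially on $pM'$ being Cartier and is the substance of \cite[Lemma 3.3]{Bir16a} transcribed into the generalized-pair setting of \cite{Fil18}. Once this formula is in hand, the remainder of the argument is purely combinatorial and does not interact with the geometry of $(X,B+M)$.
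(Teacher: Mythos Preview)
Your proof is correct and reaches the same conclusion as the paper, but the two arguments diverge after the coefficient formula is written down. The paper first cuts by general hyperplanes to reduce to the case $\dim X = 2$, then uses that a plt surface singularity along $S$ is a cyclic quotient of index $m$; this lets it write the coefficient as $1-\tfrac{1}{m}+\sum \tfrac{\alpha_i\lambda_i}{m}+\tfrac{\beta}{m'p}$ and, crucially, lets it verify DCC and rationality of accumulation points \emph{geometrically}: if a sequence of coefficients stays below $1-\epsilon$, then \cite[Proposition 3.9]{Sho92} bounds the local Cartier index, hence bounds $m$ and $m'$ by a single constant $l$, reducing to finitely many parameters. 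You instead take the formula as given by the computation in \cite[Lemma 3.3]{Bir16a} and handle the combinatorics directly, without any reduction to surfaces or appeal to the bounded-index result: you bound $m$ by noting that $m_n\to\infty$ would force the coefficients to accumulate at $1$, and you bound the remaining data using that $\Lambda\setminus\{0\}$ is bounded away from $0$. Your route is more elementary and entirely order-theoretic once the formula is in hand; the paper's route makes the surface geometry (and the role of the local index) more visible and yields an explicit bound on the denominator in terms of~$\epsilon$.
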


\begin{proof}
First, we prove that ${\rm coeff}(B_S)$ belongs to a set of rational numbers which only depends
on $p$ and $\Lambda$.
By taking hyperplane sections, we may assume that $X$ is a surface (see, e.g.,~\cite[Remark 4.8]{BZ16}).
Since a dlt surface is $\qq$-factorial, we have that $K_X+B$ is $\qq$-Cartier. We define $K_S+\bar{B}_S=(K_X+B)|_S$ by adjunction.
By construction we have $\bar{B}_S \leq B_S$.
Let $P$ be a point on $S$. We want to find a formula for 
${\rm coeff}_P(B_S)$. We may assume that ${\rm coeff}_P(\bar{B}_S)\leq
{\rm coeff}_P(B_S)<1$, so $(X,B)$ is plt at $P$.
Shrinking around $P$, we may assume that $X'$ is smooth and $S'\rightarrow S$ is an isomorphism. 

Write $B=\sum_{i=1}^n \lambda_i B_i$ with $B_i$ pairwise different prime divisors and $\lambda_i \in \Lambda$.
By~\cite[Corollary 3.10]{Sho92}, we can write
\begin{equation}\nonumber
{\rm coeff}_P(\bar{B}_S)= 1-\frac{1}{m}+ \sum_{i=1}^n \frac{\alpha_i \lambda_i}{m},
\end{equation}
where $m$ is any natural number so that $mB_1,\dots,mB_n$ are Cartier divisors
and every $\alpha_i$ is a non-negative integer.
Write $f\colon X'\rightarrow X$ and $f^*(M)=M'+E'$, where $E'$ is an effective divisor.
Since $pM'$ is Cartier, we conclude that $pM$ is Weil.
Let $m'$ be any positive integer divisible by the Cartier index of $pM$. Hence, we can write 
\[
f^*(m'pM)=m'pM'+m'pE',
\]
and $m'pE$ is a Cartier divisor. In particular, $m'pE'$ is integral.
By definition we have that $B_S=\bar{B}_S+E_S$, where $E_S$ is the push-forward of $E'|_{S'}$.
We deduce that we can write
\begin{equation}\label{coeffbs}
{\rm coeff}_P(B_S)= 1-\frac{1}{m}+ \sum_{i=1}^n \frac{\alpha_i \lambda_i}{m}+
\frac{\beta}{m'p},
\end{equation}
where $\beta$ is a non-negative integer.
We conclude that ${\rm coeff}_P(B_S)$ belongs to a set $\Omega$ only depending on $\Lambda$ and $p$. Since $m,m',\alpha_i,\beta$ are naturals
and $\lambda_i \in \Lambda \subset \qq$, we have that $\Omega\subset \qq$.

Now, we turn to prove that $\Omega$ satisfies the descending chain condition
and has rational accumulation points.
Indeed, let us assume that we have a sequence $(X_i,B_i+M_i)$ and $S_i$ as before so that $c_i={\rm coeff}_{P_i}{S_i}$ is an infinite sequence of pair-wise different rational numbers.
Furthermore assume that the sequence $c_i$ has a unique accumulation point $c_\infty$.
If $c_\infty=1$, then the sequence does not violate the descending chain condition and the accumulation point is rational.
We may assume that $c_\infty <1$. In particular, we can write
$c_i<1-\epsilon$ for some $\epsilon$ small enough.
By~\cite[Proposition 3.9]{Sho92}, we know that there exists a constant $l$ only depending on $\epsilon$ so that the Cartier index of any Weil divisor 
on $X_i$ is bounded by $l$, up to passing to a subsequence.
By the equality~\eqref{coeffbs}, we can write
\[
c_i = 1-\frac{1}{l}+\sum_{i=1}^n\frac{\alpha_i \lambda_i}{l} + \frac{\beta}{lp}
\]
where the $\alpha_i$'s and $\beta$ are positive integers.
From the inequality $\alpha_i<1$, we deduce that $n,\alpha_i$ and $\beta$ belong to a finite family of positive integers.
Hence, passing to a subsequence, we may assume that $n$, the $\alpha_i$'s and
$\beta$ are fixed positive integers.
Therefore, since the $\lambda_i$'s satisfy the descending chain condition, we conclude that the $c_i$'s satisfy the descending chain condition as well.
Moreover, we can write
\[
c_\infty= 1-\frac{1}{l}+\sum_{i=1}^n \frac{\alpha_i \bar{\lambda}_i}{l}+\frac{\beta}{lp},
\]
where $\bar{\lambda}_i\in \overline{\Lambda}\subset \qq$.
We conclude that $\overline{\Omega}\subset \qq$.
\end{proof}

\begin{proposition} \label{prop pull-back cartier}
Let $f \colon  X \rar Z$ be a contraction between normal quasi-projective varieties.
Let $D_1$ and $D_2$ be a Cartier divisor and a $\qq$-Cartier divisor on $Z$, respectively.
Assume that $f^*D_1 \sim f^*D_2$.
Then, $D_2$ is a Cartier divisor, and we have $D_1 \sim D_2$.
\end{proposition}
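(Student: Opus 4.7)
Set $E \coloneqq D_2 - D_1$; since $D_1$ is Cartier, showing that $D_2$ is Cartier is equivalent to showing that $E$ is Cartier. The divisor $E$ is $\qq$-Cartier with $E \sim_\qq 0$ (from $D_1 \sim_\qq D_2$), and $f^*E = f^*D_2 - f^*D_1$ is Cartier, being a difference of Cartier divisors. Fix a positive integer $m$ and a rational function $g \in K(Z)^*$ with $mE = \operatorname{div}(g)$. The goal becomes: show that the $\qq$-Cartier divisor $E = \tfrac{1}{m}\operatorname{div}(g)$ on $Z$ is integer Cartier, given that its pullback $f^*E = \tfrac{1}{m}\operatorname{div}(f^*g)$ on $X$ is.

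The argument relies on two consequences of $f$ being a contraction between normal varieties. First, $f_*\mathcal{O}_X = \mathcal{O}_Z$ yields $f_*\mathcal{O}_X^* = \mathcal{O}_Z^*$; in particular, for any affine open $U \subset Z$ one has $H^0(f^{-1}(U), \mathcal{O}_X^*) = H^0(U, \mathcal{O}_Z^*)$, and $f^* \colon \operatorname{Pic}(Z) \to \operatorname{Pic}(X)$ is injective. Second, the field $K(Z)$ is algebraically closed in $K(X)$: any $\alpha \in K(X)$ algebraic over $K(Z)$ would generate a finite intermediate extension, producing a factorization $X \to Z' \to Z$ with $Z' \to Z$ finite of degree $\geq 2$, contradicting $f_*\mathcal{O}_X = \mathcal{O}_Z$.

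To prove $E$ is Cartier, I would work locally at a point $z \in Z$. Choose an affine open $U \ni z$ with $mE|_U = \operatorname{div}(g|_U)$, shrunk enough so that the $m$-torsion line bundle $\mathcal{L} \coloneqq \mathcal{O}_X(f^*E)$ becomes trivial on $f^{-1}(U)$. Then $f^*E|_{f^{-1}(U)} = \operatorname{div}_X(\tilde h)$ for some $\tilde h \in K(X)^*$, and the identity $m f^*E = \operatorname{div}(f^*g)$ yields
\[
f^*g = \tilde h^m \cdot \tilde u,\qquad \tilde u \in H^0(f^{-1}(U), \mathcal{O}_X^*) = H^0(U, \mathcal{O}_Z^*) \subset K(Z)^*.
\]
Since $\tilde h^m = f^*g/\tilde u \in K(Z)^*$, the algebraic-closedness of $K(Z)$ in $K(X)$ forces $\tilde h \in K(Z)^*$. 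Then $\operatorname{div}_Z(\tilde h)$ is a Cartier divisor on $Z$ whose pullback equals $\operatorname{div}_X(\tilde h) = f^*E$, and injectivity of $f^*$ on Cartier divisors gives $\operatorname{div}_Z(\tilde h) = E$ on $U$; hence $E|_U$ is principal, so $E$ is Cartier at $z$.

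The main technical point is the trivialization of $\mathcal{L}$ on the preimage $f^{-1}(U)$ after shrinking $U$: an arbitrary $m$-torsion line bundle on $X$ need not trivialize over such a preimage. The hypothesis $D_1 \sim_\qq D_2$ is precisely what pins $\mathcal{L}$ down to an $m$-torsion class that descends from $Z$ (as witnessed by the global rational function $g$ trivializing $\mathcal{L}^{\otimes m}$), so that the trivialization can be arranged by a Čech-cocycle computation after replacing $U$ by a small enough affine neighborhood of $z$. Once this is done, the remaining steps proceed as above.
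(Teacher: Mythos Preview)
Your reduction to $E=D_2-D_1$ with $E\sim_\qq 0$ and $f^*E$ Cartier is clean, and steps 5--8 (extracting $\tilde h$, using that $K(Z)$ is algebraically closed in $K(X)$, and concluding $E=\operatorname{div}_Z(\tilde h)$ on $U$) are correct once the trivialization in step~4 is in hand. The problem is that step~4 is the entire content of the proposition, and you do not prove it.

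Concretely: you assert that for small affine $U\ni z$ the line bundle $\mathcal{L}=\mathcal{O}_X(f^*E)$ becomes trivial on $f^{-1}(U)$, justifying this only by saying the torsion class ``descends from $Z$'' and can be handled ``by a \v{C}ech-cocycle computation''. But ``$\mathcal{L}$ descends from $Z$'' means $\mathcal{L}\cong f^*\mathcal{M}$ for some $\mathcal{M}\in\operatorname{Pic}(Z)$, and if that were known you would already be done (locally $\mathcal{M}$ is trivial, hence so is $\mathcal{L}$ on $f^{-1}(U)$, and your steps 5--8 then give $E=\operatorname{div}_Z(\tilde h)$). Conversely, if $E$ is Cartier near $z$ then visibly $\mathcal{L}|_{f^{-1}(U)}$ is trivial. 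So your ``main technical point'' is \emph{equivalent} to what you are trying to prove; the reduction is circular. The fact that $\mathcal{L}^{\otimes m}$ is trivialized by a function pulled back from $Z$ does not by itself force $\mathcal{L}$ to be trivial on tubes $f^{-1}(U)$: an $m$-torsion line bundle need not trivialize on any open set, and you give no argument ruling out nontrivial torsion here.

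The paper avoids the local trivialization issue entirely by working globally with effective divisors: after twisting by an ample Cartier $H$ on $Z$ so that $D_1+H\ge 0$ and $D_2+H\ge 0$, one uses the projection formula $H^0(X,\mathcal{O}_X(f^*(D_1+H)))\cong H^0(Z,\mathcal{O}_Z(D_1+H))$ to identify the effective Cartier divisor $f^*(D_2+H)$ on $X$ as the pull-back of a member of $|D_1+H|$ on $Z$; this forces $D_2+H$, hence $D_2$, to be Cartier. If you want to salvage your local approach, you would need an honest argument that $\mathcal{L}$ is pulled back from $\operatorname{Pic}(Z)$ (for instance, by showing $\mathcal{L}$ is trivial on every fibre and invoking a see-saw/base-change statement), not a gesture at a cocycle computation.
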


\begin{proof}
We will subdivide the proof into three small claims.

{\it Claim 1:} Let $g  \colon  U \rar V$ be a projective surjective morphism between normal varieties, and $D$ a $\qq$-Cartier $\qq$-divisor on $V$.
If $g^*D=0$, then $D=0$.

Recall that the pull-back of a $\qq$-Cartier $\qq$-divisor $D$ is defined as $k \sups -1. g^*(kD)$, where $k \in \zz \subs \geq 1.$ and $kD$ is Cartier.
Therefore, it suffices to prove the claim in the case $D$ is Cartier, since $g^*(kD)=0$ if and only if $k \sups -1.g^*(kD)=0$.
Assume that $D \neq 0$.
Then, there exists a prime divisor $P \subset V$ such that ${\rm mult}_P(D)=a \neq 0$.
Then, over the generic point $\eta_P$ of $P$, we have $g^*D=ag^*P$.
As one can compute in the case of curves, $g^*P$ is a non-zero effective divisor whose support over $\eta_P$ consists of the prime divisors in $V$ dominating $P$.
As $a \neq 0$, this proves the claim.

{\it Claim 2:} The statement holds if $D_1$ and $D_2$ are effective.

By the projection formula and the fact that $f_* \O X. = \O Z.$, we have a natural isomorphism of groups $\mathrm{Hom}_{\O X.}(\O X.,\O X.(f^*D_1)) \simeq \mathrm{Hom}_{\O Z.}(\O Z.,\O Z.(D_1))$ \cite[\S II.5]{Har77}.
In particular, we have that, for every $0 \leq R \sim f^*D_1$, there exists $0 \leq S \sim D_1$ such that $R=f^*S$.
Let $0 \leq \Gamma$ be the Cartier divisor on $Z$ such that $f^*\Gamma = f^*D_2$.
Since $f^*(\Gamma - D_2)=0$, it follows from Claim 1 that $\Gamma = D_2$.
In particular, $D_2$ is Cartier and $D_2 \sim D_1$.

{\it Claim 3:} Let $D_1$ and $D_2$ be as in the statement.
Then, we may assume that they are effective.

Let $E$ be a reduced divisor such that $\Supp(D_1) \subset E$ and $\Supp(D_2) \subset E$.
Then, as $Z$ is quasi-projective, we may find an effective Cartier divisor $F$ such that $E \leq F$.
Then, for $k \ll 0$, we have $D_1 + k F \geq 0$ and $D_2 + k F \geq 0$.
By Claim 2, we have $D_1 + k F \sim D_2 + k F$.
Then, we have $D_1 \sim D_2$, and the statement follows.
\end{proof}

The proof of Lemma~\ref{fanotypeoveropen} and Theorem~\ref{genbundfor} are similar to the proof of~\cite[Proposition 6.3]{Bir16a}.

\begin{lemma}\label{fanotypeoveropen}
Let $d$ and $p$ be two natural numbers and $\Lambda \subset \qq$ be a set satisfying the descending chain condition with rational accumulation points. 
Then, there exist a natural number $q$ and a set $\Omega\subset \qq$ satisfying the descending chain condition with rational accumulation points, only depending on $d,p$ and $\Lambda$, satisfying the following.
Let $f\colon X \rightarrow Z$ be a contraction between positive dimensional normal quasi-projective varieties, $(X,B+M)$ be a generalized log cannical pair of dimension $d$ such that
\begin{itemize}
    \item $K_X+B+M\sim_{\qq,Z} 0$;
    \item $X$ is of Fano-type over some non-empty open set of $Z$;
    \item ${\rm coeff}(B)\subset \Lambda$; and 
    \item $pM'$ is Cartier.
\end{itemize}
Then the generalized pair $(Z,B_Z+M_Z)$ obtained by the generalized canonical bundle formula satisfies
\begin{itemize}
    \item ${\rm coeff}(B_Z)\subset \Omega$; and
    \item $qM_Z$ is a Weil divisor.
\end{itemize}
\end{lemma}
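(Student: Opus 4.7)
The plan is to reduce to the case where $Z$ is a smooth projective curve and then invoke Theorem~\ref{localconj0n} to produce uniform complements controlling both the boundary and moduli parts of the generalized canonical bundle formula. Both $\coeff(B_Z)$ at a prime divisor $P$ of $Z$ and the Weil property of $qM_Z$ at $P$ are codimension-one phenomena on $Z$, so they can be verified after cutting $Z$ with sufficiently general very ample divisors. The sections can be chosen general enough that the generic points of all generalized non-klt centers of $(X,B+M)$ still map into the Fano-type open set $U\subset Z$, so that, on the restricted morphism $X_C\to C$, Lemma~\ref{ftovercurve} upgrades the hypothesis and ensures that $X_C$ is of Fano type over all of $C$.

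With $Z=C$ a curve, Theorem~\ref{localconj0n} provides, for each closed point $z\in C$, a strong $(0,n)$-complement $B_z^+\geq B$ for $(X,B+M)$ over $z$ with $n$ depending only on $d$, $p$, and $\Lambda$, satisfying $n(K_X+B_z^++M)\sim 0$ in a neighborhood of $z$ and $\coeff(B_z^+)\subset \tfrac{1}{n}\mathbb{Z}$. The coefficient of $B_Z$ at $z$ equals $1-t_z$ by construction, where $t_z=\glct(K_X+B+M\mid f^*z)$ is a generalized log canonical threshold of the form controlled by \cite[Theorem 1.5]{BZ16}; hence the values $t_z$ lie in an ACC set depending only on $d$, $p$, and $\Lambda$, and tracing accumulation points through that proof yields the DCC set $\Omega$ with rational accumulation points, using that the accumulation points of $\Lambda$ are themselves rational.

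For the moduli part, the idea is to arrange the complement $B_z^+$ so that its difference from $B$ is vertical over $Z$, by first completing the log canonical pair $(X,B+t_z f^*z+M)$ and then applying Proposition~\ref{replace complement in linear series} to a general element of $|-n(K_X+B+M)|$ over $z$. Once this is achieved, Lemma~\ref{sameopen} identifies the moduli divisor induced on $Z$ by $(X,B_z^++M)$ with the original $M_Z$. Pushing forward the relation $n(K_X+B_z^++M)\sim 0$ via the canonical bundle formula and Proposition~\ref{prop pull-back cartier} then shows that $n(K_Z+B_Z^++M_Z)$ is Cartier in a neighborhood of $z$, where $B_Z^+$ is the induced boundary and satisfies $\coeff(B_Z^+)\subset\tfrac{1}{n}\mathbb{Z}$. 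Writing $M_Z=(K_Z+B_Z^++M_Z)-K_Z-B_Z^+$ and choosing $q$ divisible by $n$ then yields that $qM_Z$ is Weil in a neighborhood of every $z$, and hence globally.

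The main obstacle is the control of the moduli part: while the DCC for $\coeff(B_Z)$ follows directly from \cite[Theorem 1.5]{BZ16}, showing that $qM_Z$ is Weil requires assembling the local Cartier-index bounds supplied by complements into a uniform global statement. The critical manipulation is to arrange each complement $B_z^+$ to differ from $B$ only by a vertical $\qq$-divisor, so that Lemma~\ref{sameopen} applies and identifies the moduli divisor on $Z$, after which Proposition~\ref{prop pull-back cartier} delivers the required Cartier-index bound for $n(K_Z+B_Z^++M_Z)$. Verifying that this vertical arrangement can always be made --- by pre-complementing at the log canonical threshold $t_z$ and lifting a suitably general member of the linear series --- is the technically delicate step on which the whole argument turns.
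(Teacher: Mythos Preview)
Your overall strategy---reduce to the case $\dim Z=1$ by general hyperplane cuts, upgrade to Fano type over the whole curve via Lemma~\ref{ftovercurve}, and then use Theorem~\ref{localconj0n} to produce bounded complements---is exactly the route the paper takes. However, two steps in your proposal are not actually justified, and in each case the paper supplies a concrete computation that your sketch does not.

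\textbf{The set $\Omega$.} Citing \cite[Theorem 1.5]{BZ16} gives that the $t_z$ lie in an ACC set, hence $1-t_z$ in a DCC set; but ``tracing accumulation points through that proof'' is not a proof that the accumulation points are rational. The paper does not argue this way. Instead, it builds a complement on a dlt model over $z$ and observes that, because $K_X+B+M\sim_{\qq,Z}0$, the complement differs from the boundary by exactly $t_z f^*z$. Comparing multiplicities along a single component $S$ of $f^*z$ yields the explicit formula
\[
\mathrm{coeff}_z(B_Z)=1-\frac{b-b^+}{m},\qquad b\in\Lambda,\ b^+\in\tfrac{1}{q}\zz,\ m\in\nn,
\]
from which both DCC and rationality of accumulation points are immediate. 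Your shortcut does not recover this.

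\textbf{The moduli part.} Your claim that the boundary $B_Z^+$ induced by the complement satisfies $\coeff(B_Z^+)\subset\tfrac{1}{n}\zz$ is unsupported: coefficients of $B_z^+$ on $X$ being in $\tfrac{1}{n}\zz$ says nothing a priori about the coefficients of the induced boundary on $Z$---that implication is essentially the content of the lemma you are proving. The paper's move is sharper: take $\Theta=B+\sum_{z}t_zf^*z$; then by construction $\Theta_Z$ is \emph{reduced}, so $q\Theta_Z$ is trivially integral, and Proposition~\ref{prop pull-back cartier} applied to $q(K_X+\Theta+M)\sim qf^*(K_Z+\Theta_Z+M_Z)$ forces $qM_Z$ to be integral. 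Note also that the ``technically delicate'' verticality you worry about is automatic: since already $K_X+B+M\sim_{\qq,Z}0$, any strong complement $B^+$ satisfies $B^+-B\sim_{\qq,Z}0$ with $B^+-B\geq 0$, hence is vertical; no appeal to Proposition~\ref{replace complement in linear series} is needed here.

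Finally, in the reduction to curves you should record that the restricted pair $(G,B_G+M_G)$ still has coefficients in a controlled DCC set with rational accumulation points and $pM'_G$ Cartier; this is Lemma~\ref{coeffdivadj}, which the paper invokes explicitly.
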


\begin{proof}
We will prove the existence of $\Omega$ and $q$ by induction on the dimension of $Z$.
We will use the existence of log canonical complements and the generalized canonical bundle formula to produce $q$.

Fix a point $z \in Z$ over which $X$ is of Fano-type. Then, by Theorem~\ref{localconj0n}, we can find a strong $(0,q)$-complement $K_X+B^++M$ of $K_X+B+M$ over $z$. Since $\K X. + B + M \sim_{\qq,Z} 0$, we have that $B^+=B$ over the generic point of $Z$. Hence, we may find $L$ on $X$ and $L_Z$ on $Z$ so that
$q(K_X+B+M)\sim qL$ and $qL=qf^*L_Z$.
Therefore, by the generalized canonical bundle formula~\cite[Theorem 1.4]{Fil18} with respect to $L_Z$, we may write
\[
q(K_X+B+M) \sim qL = qf^*L_Z=qf^*(K_Z+B_Z+M_Z).
\]
Now, we turn to prove the existence of $\Omega$.
Let $H$ be a general hyperplane section on $Z$ and $G \coloneqq f^*H$. Denote by $g\colon G\rightarrow H$ the induced morphism. Write $(K_G+B_G+M_G)=(K_X+B+G+M)|_G$, 
and let $K_H+B_H+M_H$ be the generalized pair obtained by the generalized canonical bundle formula for the morphism $g$ and $K_G+B_G+M_G$.
Observe that $K_G+B_G+M_G\sim_{\qq,H} 0$ and $G$ is of Fano-type over an open set of $H$.
Let $D$ be a prime divisor on $Z$, and let $C$ be a component of $D\cap H$.
Let $t$ be the generalized log canonical threshold of $f^* D$ with respect to $(X,B+M)$ over the generic point of $D$.
By generalized divisorial inversion of adjunction, we have that $t$ is the log canonical threshold of $g^* C$ with respect to $(G,B_G+M_G)$ over the generic point of $C$.
Therefore we conclude that 
${\rm coeff}(B_Z)= {\rm coeff}(B_H)$.
By Lemma~\ref{coeffdivadj}, we know that $pM'_G$  is Cartier and ${\rm coeff}(B_G)$ belongs to a set only depending on $d$, $p$, and $\Lambda$ and satisfying the descending chain condition with rational accumulation points. Therefore, by repeated hyperplane cuts, we may reduce to the case when $Z$ is a smooth curve.

From now on, we assume that $\dim Z=1$. Thus, by Lemma~\ref{ftovercurve}, we have that $X$ is of Fano-type over $Z$. Let $z\in Z$ be a closed point and $t$ the generalized log canonical threshold of $(X,B+M)$ with respect to $f^*z$.
Denote by $(X'',\Gamma''+M'')$ a generalized dlt model of $(X,B+tf^*z+M)$.
There exists a boundary divisor $B''$ such that $B''\leq \Gamma''$, $\coeff(B'') \subset \Lambda$, $\lfloor B''\rfloor$ has a component mapping to $z$ and $\tilde{B}\leq B''$, where $\tilde{B}$ denotes the strict transform of $B$ on $X''$.
Since
$X''$ is of Fano-type over $Z$, we may run a minimal model program for $-(K_{X''}+B''+M'')$ over $Z$,
and denote by $Y$ the resulting model.
Denote by $M_Y$ and $B_Y$ the push-forward of $M''$ and $B''$, respectively.
Since $B''\leq \Gamma''$, $-(K_Y+B_Y+M_Y)$ is nef over $Z$.
Furthermore, since $B'' \leq \Gamma''$ and the MMP is trivial for $\K X''. + \Gamma'' + M''$, $(Y,B_Y+M_Y)$ is generalized log canonical.
By Theorem~\ref{localconj0n}, there exists a strong $(0,q)$-complement $B^+_Y$ of $K_Y+B_Y+M_Y$ over $z\in Z$.
Therefore, there exists a strong $(0,q)$-complement ${B''}^{+}$ of
$K_{X''}+B''+M''$ over $z\in Z$ (see~\cite[6.1.(3)]{Bir16a}).
Since both $K_{X''}+{B''}^{+}+M''$
and $K_{X''}+{B''}+M''$ are relatively trivial over the base, we conclude that ${B''}^+-B''=af^*z$
for some real number $a$.
Moreover, since $K_{X''}+{B''}^{+}+M''$ has a log canonical center mapping onto $z$,
we deduce that $a=t$.
Let $S$ be a component of $f^*Z$ and define $b\coloneqq \mult_S B''$, $b^+ \coloneqq \mult \subs S. {B''}^+ $, and
$m \coloneqq \mult_S f^*z$.
Then, we have that 
\[
\mult_z (B_Z) = 1+\frac{b-b^+}{m}
\]
(with $b\leq b^+\leq 1$) belongs to the set
\[
\Omega \coloneqq \left\{ 
1+\frac{b-b^+}{m} \mid 
b\in \Lambda, b^+\in \nn\left[ \frac{1}{q} \right],
m\in \nn
\right\},
\]
which satisfies the descending chain condition and has rational accumulation points.
Observe that $\Omega$ only depends on 
$q$ and $\Lambda$. In particular, it only depends on
$d,p$ and $\Lambda$.\

Finally, we need to prove that $qM_Z$ is a Weil divisor.
First, we reduce to the case when $Z$ is a curve.
Let $H$ be a general hyperplane section on $Z$, $G$ it's pull-back to $X$, $g\colon G\rightarrow H$ the induced morphism, $H'\sim H$ a general member of the linear system $|H|$, $D$ a prime divisor of $Z$ and $C$ a prime component of $D\cap H$. We can write
$K_H \coloneqq (K_Z+H')|_H$, being $H$ a general hyperplane section.
We write 
\[
M_H  \coloneqq (L_Z+H')|_H - (K_H+B_H),
\]
so we have 
\[
q(K_G+B_G+M_G) \sim 
q(L+G)|_G \sim
qg^*(L_Z+H'|_H) \sim
qg^*(K_H+B_H+M_H).
\]
Hence, $M_H$ is the moduli part induced by 
$(G,B_G+M_G)$ over $H$.
On the other hand, we have that
\[
(B_H+M_H)=(B_Z+M_Z)|_H.
\]
We know that 
${\rm coeff}_{C}(B_H)={\rm coeff}_{D}(B_Z)$,
therefore we have that 
${\rm coeff}(M_Z)= {\rm coeff}(M_H)$. So it suffices to prove that $qM_H$ is Weil.

From now on, we may assume that $\dim Z=1$. Thus, by Lemma~\ref{ftovercurve}, we may assume that $X$ is of Fano-type over $Z$.
Let $V\subset Z$ be an open subset so that ${\rm Supp}(B_Z)\subset Z\setminus V$.
Thus, we can write
\[
\Theta = B+ \sum_{z\in Z\setminus V} t_z f^* z,
\]
where $t_z$ is the generalized log canonical threshold of 
$(X,B+M)$ with respect to $f^*z$.
Let $\Theta_Z$ be the boundary part of generalized adjunction with respect to $(X,\Theta +M)$,
then we have that 
\[
\Theta_Z = B_Z +\sum_{z\in Z\setminus V} t_z z.
\]
By definition of $B_Z$, the divisor $\Theta_Z$ is reduced.
Now, fix $z \in V$.
Then, $(X,\Theta + f^*z + M)$ is generalized log canonical.
By Theorem \ref{localconj0n}, it admits a strong $(0,q)$-complement over $z$.
Since $(X,\Theta + f^*z + M)$ is not generalized klt over $z$, it follows that $q(\K X. + \Theta + f^*z + M)\sim 0$ over some neighborhood of $z$.
Then, as $f^* z$ is Cartier, we conclude that $q(\K X. + \Theta + M)\sim 0$ over some neighborhood of $z$.
Similarly, we can argue that $q(\K X. + \Theta + + M)\sim 0$ over some neighborhood of $z$, for $z \in Z \setminus V$.
Thus, we have that $K_X+\Theta+M$ is a strong $(0,q)$-complement of $K_X+B+M$ over $z\in Z$, for every point $z$. In particular, we have that
$q(K_X+\Theta +M)\sim_{Z} 0$.
Therefore, we have that
\begin{equation*}
    \begin{split}
    q(K_X+\Theta +M) &=
q(K_X+B+M) + q(\Theta -B)\\
&\sim
qf^*(K_Z+B_Z+M_Z)+ qf^*(\Theta_Z-B_Z)\\
& =qf^*(K_Z+\Theta_Z+M_Z).
    \end{split}
\end{equation*}
In particular, we have that $qf^*(K_Z+\Theta_Z+M_Z)$ is a Cartier divisor. Also, $q(\K X. + \Theta + M)$ is linearly equivalent to the pull-back of a Cartier divisor on $Z$. Therefore, by Proposition \ref{prop pull-back cartier}, we conclude that $q(K_Z+\Theta_Z+M_Z)$ is a Cartier divisor. Since $q(K_Z+\Theta_Z)$ is integral, we conclude that $qM_Z$ is integral as well.
\end{proof}

\begin{proof}[Proof of Theorem~\ref{genbundfor}]
Due to Lemma~\ref{fanotypeoveropen}, it suffices to prove that $qM_{Z}'$ is Cartier.
As in the above proof, we can find $L$ on $X$ and $L_Z$ on $Z$ so that
$q(K_X+B+M)\sim qL = qf^*(L_Z)$.

Let $Z_0 \rightarrow Z$ be a high resolution of singularities of $Z$ and 
$X_1$ a log resolution of the generalized pair $(X,B+M)$ so that 
the rational map $X_1\dashrightarrow Z_0$ is a morphism and $M'$ descends on $X_1$.
By the assumptions, we may find a non-empty open set $U\subset Z$ such that $Z_0\rightarrow Z$ is an
isomorphism over $U$, $X$ is of Fano-type over $U$, and $(X,B+M)$ is generalized klt over $U$.
Denote by $U_0$ its inverse image on $Z_0$.
Moreover, we denote by $X_0$ the normalization of the main component of the fiber product $Z_0 \times_Z X$.
Let $\Delta_1$ be the sum of the birational transform $B_1$ of $B$ and the reduced exceptional divisor of $X_1 \rightarrow X$ with all the components mapping outside $U_0$ removed.
We denote by $M_1$ the trace of the birational divisor inducing $M$ on $X_1$.
Observe that $(X_1,\Delta_1+M_1)$ is a generalized klt pair over $U_0$.
By the negativity lemma~\cite[Lemma 3.39]{KM98} and the fact that $(X,B+M)$ is generalized klt over $U$, the relative diminished base locus over $X_0$ of $K_{X_1}+\Delta_1+M_1$ contains all exceptional divisors over $X_0$ with center in $U_0$.
We run a minimal model program for $K_{X_1}+\Delta_1+M_1$ over $X_0$ with scaling of an ample divisor $A_1$. After finitely many steps, 
all these exceptional divisors with center in $U_0$ are contracted.
Indeed, these divisors are in the relative diminished base locus over $X_0$ of $\K X_1. + \Delta_1 + M_1 + \lambda A_1$ for $0 < \lambda \ll 1$, and $\K X_1. + \Delta_1 + M_1 + \lambda A_1 \sim \K X_1. + \Phi_1$, where $\Phi_1 \geq 0$ and $(X_1,\Phi_1)$ is dlt over $U_0$.
Hence, after finitely many steps the variety is of Fano-type over $U_0$ \cite[cf. 2.13.(6)]{Bir16b}, so the minimal model program terminates over $U_0$ (see, e.g.~\cite[2.10]{Bir16a}).
We call this variety $X_2$.
Denote the strict transform of $B_1$ on $X_2$ by $B_2$, the strict transform of $\Delta_1$ on $X_2$ by $\Delta_2$, and by $M_2$ the trace of $M$ on $X_2$.
Observe that the generalized pair $(X_2,\Delta_2+M_2)$ is a small $\qq$-factorialization of $(X,B+M)$ over $U_0$ (see \cite[Lemma 4.5]{BZ16}).
In particular, we have that 
\[
K_{X_2}+\Delta_2+M_2 \sim_{\qq , U_0} 0,
\]
$\Delta_2+M_2$ is big over $Z_0$, 
and $(X_2,\Delta_2+M_2)$ is generalized klt.
Indeed, all log canonical centers of $(X_1,\Delta_1+M_1)$ are contained 
in $\lfloor \Delta_1\rfloor$, and these divisors are contracted by the minimal model program $X_1\dashrightarrow X_2$.
Since $\Delta_2+M_2$ is big over $Z_0$ and $(X_2,\Delta_2+M_2)$ is generalized klt,
we can write
\[
K_{X_2}+\Delta_2+M \sim_{\qq , Z_0} K_{X_2}+\Delta'_2,
\]
where $(X_2,\Delta'_2)$ is a klt pair.
By~\cite[Theorem 1.4]{Bir12} and~\cite[Theorem 1.1]{HMX14},
we can run a minimal model program over $Z_0$ for $K_{X_2}+\Delta'_2$
which terminates with a good minimal model $K_Y+\Delta'_Y$ over $Z_0$.
Hence, we deduce that $K_Y+\Delta_Y+M_Y$ is semi-ample over $Z_0$, 
where $\Delta_Y$ is the push-forward of $\Delta_2$ on $Y$ and $M_Y$
is the trace of $M$ on $Y$. 
Let $f_{Z_1} \colon Y\rightarrow Z_1$ the morphism defined by $K_Y+\Delta_Y+M_Y$ over $Z_0$.
Since $K_Y+\Delta_Y+M_Y$ is $\qq$-linearly trivial over an open set of $Z_0$, we conclude that $Z_1\rightarrow Z_0$ is a birational morphism.
Observe that we have 
\[
K_Y+\Delta_Y+M_Y \sim_{\qq , Z_1} 0.
\]
By Lemma~\ref{fanotypeoveropen}, we conclude that the moduli part $M_{Z_1}$ induced by $(Y,B_Y+M_Y)$ on $Z_1$ is such that
$qM_{Z_1}$ is Weil. 
Moreover, the generalized pairs $(Y,\Delta_Y+M_Y)$ and $(X,B+M)$ have the same moduli part,
and coincide over $U_0$ on a common log resolution $W$ of $X$ and $Y$.
Denote by $K_Y+B_Y+M_Y$ (resp. $L_Y$) the push-forward to $Y$ of the pull-back to $W$ of $K_X+B+M$ (resp. $L$).
The divisor $P_Y \coloneqq \Delta_Y-B_Y$ is vertical over $Z_1$ and $\qq$-trivial over $Z_1$,
hence it is the pull-back of a divisor $P_{Z_1}$ on $Z_1$.
Denote by $(Z_1,\Delta_{Z_1}+M_{Z_1})$ the generalized pair obtained by the generalized canonical bundle formula
for $K_Y+\Delta_Y+M_Y$ over $Z_1$.
Then, we have $\Delta_{Z_1}=P_{Z_1}+B_{Z_1}$,
where $B_{Z_1}$ is the boundary part obtained by the generalized canonical bundle formula 
for $K_Y+B_Y+M_Y$ over $Z_1$.
Observe that we have
\begin{equation*}
\begin{split}
q(K_Y+\Delta_Y+M_Y) &= q(K_Y+B_Y+P_Y+M_Y) \\
&\sim q(L_Y+P_Y) \\
&=qf_{Z_1}^*(L_{Z_1}+P_{Z_1}) \\
&=qf_{Z_1}^*(K_{Z_1}+\Delta_{Z_1}+M_{Z_1}),
\end{split}    
\end{equation*}
where $L_{Z_1}$ is the pull-back of $L_Z$ on $Z_1$.
Here, $M_{Z_1}=L_{Z_1}-(K_{Z_1}+B_{Z_1})$ is the moduli part induced on $Z_1$ by both of 
$(Y,\Delta_Y+M_Y)$ and $(Y,B_Y+M_Y)$.
Therefore, we conclude that $qM_{Z_0}$ is Weil, and hence $qM_{Z_0}$ is Cartier, being $Z_0$ smooth. 
As we may assume that $Z_0$ is the model where $M_{Z_0}$ descends, it follows that $qM_Z'$ is Cartier.
\end{proof}

\begin{proof}[Proof of Corollary~\ref{genadj}]
Let $(X,B+M)$ be a generalized pair and $W\subset X$ be a generalized log canonical center. Let $(Y,B_Y+M_Y)$ be a generalized dlt model of $(X,B+M)$ and $E$ a generalized log canonical place corresponding to $W$. Since $W$ is an exceptional generalized log canonical center, we have that
$E$ is the only generalized log canonical place mapping onto $W$.
Observe that $E$ is normal.
By Lemma~\ref{coeffdivadj}, there exists a set $\Omega_0 \subset \qq$ satisfying the descending chain condition with rational accumulation points
and a natural number $q_0$ depending only on $d,p$ and $\Lambda$,
so we can write 
\[
(K_Y+B_Y+M_Y)|_{E} \sim_\qq K_{E}+B_{E}+M_{E},
\]
${\rm coeff}(B_{E}) \subset \Omega_0$ 
and $q_0 M_{E}'$ is Cartier. 
By the assumption on the exceptionality of $W$, we get that the 
generalized pair $(E,B_E+M_E)$ is generalized klt over the generic point of $W$.
Now, we can apply Theorem~\ref{genbundfor} to the 
generalized pair $(E, B_{E}+M_{E})$
with respect to the morphism
$E \rightarrow W$, and conclude that there exists a generalized pair
$(W, B_{W}+M_{W})$ on $W$, so that 
\[
(K_Y+B_Y+M_Y)|_{W} \sim_\qq K_{W}+B_{W}+M_{W},
\]
${\rm coeff}(B_{W}) \subset \Omega$, and $qM_{W}'$ is Cartier,
where $\Omega \subset \qq$ is a set with the descending chain condition with rational accumulation points and $q$ is a natural number, 
both depending only on $d-1$, $q_0$ and $\Omega_0$, hence
only depending on $d$, $p$ and $\Lambda$.
\end{proof}

For the reader's convenience, we will split Theorem \ref{McKernanthm} into two statements.

\begin{theorem} \label{McKernan part 1}
Conjecture \ref{encomplement} implies Conjecture \ref{McKernanconj}.
\end{theorem}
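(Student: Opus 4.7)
My plan is to apply Conjecture \ref{encomplement} to manufacture a relative complement on $X$ over $Z$, push the resulting klt-trivial fibration down via the canonical bundle formula, and finally invoke Birkar's known case of Shokurov's Conjecture 6.3 of \cite{Bir17} from \cite{Bir16} to bound the singularities of the induced generalized pair $(Z, B_Z + M_Z)$, from which the bound on $Z$ alone will follow.

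The opening reductions are short. Given $f : X \to Z$ a Mori fiber space from a $\qq$-factorial $\epsilon$-log canonical variety, I will first observe that $-K_X$ is $f$-ample and that $(X,0)$ is klt (since $\epsilon > 0$), so $X$ is of Fano type over $Z$. Applying Conjecture \ref{encomplement} with $B = 0$, $M = 0$, $\Lambda = \{0\}$ and $p = 1$, I obtain, for each chosen point $z \in Z$, a strong $(\delta', n)$-complement $B^+$ of $(X, 0)$ over $z$, where $n \in \nn$ and $\delta' > 0$ depend only on $d = \dim X$ and $\epsilon$. After shrinking $Z$ around $z$, I then have $K_X + B^+ \sim_{\qq, Z} 0$ with $(X, B^+)$ a $\delta'$-log canonical (hence klt) pair, which is in particular klt over the generic point of $Z$.

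The heart of the argument is next. I will feed the klt-trivial fibration $f : (X, B^+) \to Z$ into the canonical bundle formula, obtaining a generalized pair $(Z, B_Z + M_Z)$ with $K_X + B^+ \sim_\qq f^*(K_Z + B_Z + M_Z)$ and $B_Z \geq 0$. Then I will invoke Birkar's theorem from \cite{Bir16} to conclude that $(Z, B_Z + M_Z)$ is generalized $\delta$-log canonical for some $\delta > 0$ depending only on $\delta'$ and $d$, and hence only on $\epsilon$ and $d$. This is the step I expect to be the main obstacle, in the sense that all the substantive content lies in the applicability of Birkar's result; the rest of the argument is essentially bookkeeping.

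To close, I will translate this bound on $(Z, B_Z + M_Z)$ into the desired bound on $Z$ alone. Fixing a birational model $\pi : Z' \to Z$ where $M$ descends to the nef divisor $M_{Z'}$, a direct comparison of log discrepancies gives
$$a_E(Z) \;=\; a_E(Z, B_Z + M_Z) + \mult_E(\pi^* B_Z) + \mult_E(\pi^* M_Z - M_{Z'})$$
for every prime divisor $E$ over $Z$. The first error term is nonnegative because $B_Z \geq 0$, and the second is nonnegative by the negativity lemma applied to the $\pi$-nef divisor $M_{Z'}$, which satisfies $\pi_*(\pi^* M_Z - M_{Z'}) = 0$. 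Therefore $a_E(Z) \geq a_E(Z, B_Z + M_Z) \geq \delta$, so $Z$ is $\delta$-log canonical, as required.
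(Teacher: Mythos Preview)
Your proposal is correct and follows essentially the same route as the paper: produce a strong $(\delta',n)$-complement $B^+$ via Conjecture~\ref{encomplement}, then feed the resulting klt-trivial fibration into Birkar's result \cite[Theorem 1.3]{Bir16} to bound the singularities of the base. The only point where the paper is more explicit is in checking the hypothesis of \cite[Theorem 1.3]{Bir16}: one needs the general fibers $(X_t,B^+_t)$ to lie in a log bounded family, which the paper verifies using BAB \cite[Theorem 1.1]{Bir16b} (since $X_t$ is $\delta'$-lc Fano) together with $\coeff(B^+)\subset\{1/n,\dots,(n-1)/n\}$; you should fill this in rather than leave it implicit. Your closing discrepancy comparison is fine and is equivalent to the paper's shortcut via the $\qq$-factoriality of $Z$ \cite[Corollary 3.18]{KM98}, which you are also implicitly using to make sense of $a_E(Z)$.
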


\begin{proof}
The statement of Conjecture \ref{McKernanconj} is local on the base. Fix a point $z \in Z$. Assuming Conjecture \ref{encomplement}, there exists a strong $(\zeta,n)$-complement $B^+$ for $X$ over $z$, where $\zeta>0$. Up to shrinking $Z$ and $X$ over it accordingly, we may assume that $(X,B^+)$ is $\zeta$-log canonical. Therefore, the general fiber $(X_t,B_t)$ of $f$ is a $\zeta$-log canonical pair. Furthermore, by assumption, $-\K X_t.$ is ample. In particular, by \cite[Theorem 1.1]{Bir16b}, $X_t$ belongs to a bounded family. Since $\coeff(B^+) \subset \lbrace \frac{1}{n}, \ldots \frac{n-1}{n} \rbrace$, by the same argument as in the proof of Theorem \ref{globalen}, the pairs $(X_t,B^+_t)$ belong to a log bounded family depending just on $d$ and $\epsilon$.

Now, by \cite[Theorem 1.3]{Bir16}, there exists boundary $\Delta$ on $Z$ such that $(Z,\Delta)$ is $\delta$-log canonical, where $\delta>0$ depends on $\epsilon$ and $d$. Since $Z$ is $\qq$-factorial \cite[Corollary 3.18]{KM98}, it follows that $Z$ is $\delta$-log canonical.
\end{proof}

Now we will address the second part of the statement of Theorem \ref{McKernanthm}.

\begin{theorem} \label{implication of conjectures}
Let $d$ and $m$ be positive integers and $\epsilon$ a positive real number. Then, there exists a positive real number $\delta$ such that the following holds. If $f  \colon  X \rar Z$ is a Mori fiber space, $X$ is projective, $\epsilon$-log canonical and $\qq$-factorial and $m\K X.$ is Cartier, then $Z$ is $\delta$-log canonical.
\end{theorem}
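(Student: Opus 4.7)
The plan is to adapt the proof of Theorem \ref{McKernan part 1}. The only step there that invokes the (unproven) Conjecture \ref{encomplement} is the production, for each closed point $z \in Z$, of a strong $(\zeta, n)$-complement for $(X, 0)$ over $z$ with $\zeta > 0$ and $n \in \nn$ depending only on the data. The hypothesis that $mK_X$ is Cartier will let us construct such a complement directly by effective basepoint-freeness, which is the substantive new input.

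Fix a closed point $z \in Z$. Since $X$ is $\qq$-factorial, klt (because $\epsilon > 0$), and $-K_X$ is $f$-ample, $X$ is of Fano type over $Z$ in a neighborhood of $z$. Moreover $L := -mK_X$ is $f$-ample Cartier and $L - K_X = -(m+1)K_X$ is also $f$-ample. By Kollár's effective basepoint-free theorem in its relative form \cite[Theorem 2.2.4]{Fuj09}, there is a positive integer $N$ depending only on $d$ such that $|-NmK_X|$ is $f$-free over a neighborhood of $z$. I will set $n := Nm \cdot \lceil 1/(1-\epsilon) \rceil$, so that $n$ depends only on $d, m, \epsilon$, and take a general element $\Gamma \in |-nK_X|$ over $z$ in the sense of Definition \ref{general relative element}. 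Setting $B^+ := \Gamma/n$, the same Bertini argument as in the proof of Lemma \ref{lemma strong complements finite coeff} shows that $(X, B^+)$ is $\epsilon$-log canonical in a neighborhood of $X_z$, and by construction $n(K_X + B^+) \sim 0$ there. Thus $B^+$ is a strong $(\epsilon, n)$-complement for $(X, 0)$ over $z$ with $n = n(d, m, \epsilon)$.

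With this uniform complement in hand, the remainder of the argument follows that of Theorem \ref{McKernan part 1}. After shrinking $Z$ and $X$ over it, $(X, B^+)$ is $\epsilon$-log canonical with $K_X + B^+ \sim_\qq 0$ over $Z$; the general fiber $(X_t, B^+_t)$ is then an $\epsilon$-log canonical Calabi--Yau pair of dimension at most $d$, and by \cite[Theorem 1.1]{Bir16b} the varieties $X_t$ are bounded. Since $\coeff(B^+) \subset \{1/n, \ldots, (n-1)/n, 1\}$ is finite, the argument from the proof of Theorem \ref{globalen} places the pairs $(X_t, B^+_t)$ in a log bounded family depending only on $d, m, \epsilon$. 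Then \cite[Theorem 1.3]{Bir16} produces a boundary $\Delta$ on $Z$ such that $(Z, \Delta)$ is $\delta$-log canonical for some $\delta = \delta(d, m, \epsilon) > 0$. Since $Z$ is $\qq$-factorial by \cite[Corollary 3.18]{KM98}, $Z$ itself is $\delta$-log canonical.

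The main obstacle is the first step, producing the strong complement with uniform bounds. Here the hypothesis that $mK_X$ be Cartier is precisely what turns Kollár's effective basepoint-free theorem into a uniform statement across all such $f: X \to Z$, bypassing the need for the full Conjecture \ref{encomplement}; without it, the $f$-Cartier index of $K_X$ could be unbounded and the basepoint-free $N$ would escape control. Once the complement is available, the passage from log boundedness of the log Calabi--Yau fibers to a bound on the singularities of the base is an immediate application of Birkar's \cite[Theorem 1.3]{Bir16}.
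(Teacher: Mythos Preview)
Your proposal is correct and follows essentially the same approach as the paper's proof: both use the Cartier-index hypothesis together with the relative effective basepoint-free theorem \cite[Theorem 2.2.4]{Fuj09} to produce a bounded strong $(\epsilon,n)$-complement via a general element of $|-nK_X|$, and then feed this into the argument of Theorem \ref{McKernan part 1} (log boundedness of the general fibre and \cite[Theorem 1.3]{Bir16}). The only cosmetic difference is that the paper packages the constant as $a=a(m,d)$ and then enlarges it so that $a\geq (1-\epsilon)^{-1}$, whereas you separate the dimension-dependent factor $N$ from $m$ and the $\epsilon$-adjustment explicitly.
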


\begin{proof}
Since $m\K X.$ is Cartier, by \cite[Theorem 2.2.4]{Fuj09}, there is a positive integer $a$ depending just on $m$ and $d$ such that $-a\K X.$ is $f$-free. Up to taking a bounded multiple depending on $\epsilon$, we may assume that $a \geq (1-\epsilon)\sups -1.$.

Since the statement is local in nature, we may fix $z \in Z$ and assume that $Z$ is affine. Then, $\O X.(-a\K X.)$ is basepoint-free. For a general choice of $0 \leq \Gamma \sim -a\K X.$, define $B^+ \coloneqq \frac{\Gamma}{a}$. 
Since $X$ is $\epsilon$-log canonical, $a \geq (1-\epsilon)\sups -1.$, and $\Gamma$ is a general element of a basepoint-free linear series, it follows that $(X,B^+)$ is $\epsilon$-log canonical.
Thus, by construction, $B^+$ is a strong $(\epsilon,a)$-complement for $(X,0)$ over $z$. Then, by the same argument in the proof of Theorem \ref{McKernan part 1}, it follows that $Z$ is $\delta$-log canonical, where $\delta$ depends on $d$, $\epsilon$ and $m$.
\end{proof}

\begin{bibdiv}
\begin{biblist}

\bib{BCHM}{article}{
   author={Birkar, Caucher},
   author={Cascini, Paolo},
   author={Hacon, Christopher D.},
   author={M\textsuperscript{c}Kernan, James},
   title={Existence of minimal models for varieties of log general type},
   journal={J. Amer. Math. Soc.},
   volume={23},
   date={2010},
   number={2},
   pages={405--468},
   issn={0894-0347},
   review={\MR{2601039}},
}

\bib{Bir04}{misc}{
  author = {Birkar, Caucher},
  title={Boundedness of $\epsilon$-log Canonical Complements on Surfaces},
  year = {2004},
  note = {https://arxiv.org/abs/math/0409254},
}

\bib{Bir12}{article}{
   author={Birkar, Caucher},
   title={Existence of log canonical flips and a special LMMP},
   journal={Publ. Math. Inst. Hautes \'{E}tudes Sci.},
   volume={115},
   date={2012},
   pages={325--368},
   issn={0073-8301},
   review={\MR{2929730}},
   doi={10.1007/s10240-012-0039-5},
}

\bib{Bir16}{article}{
   author={Birkar, Caucher},
   title={Singularities on the base of a Fano-type fibration},
   journal={J. Reine Angew. Math.},
   volume={715},
   date={2016},
   pages={125--142},
   issn={0075-4102},
   review={\MR{3507921}},
   doi={10.1515/crelle-2014-0033},
}	

\bib{Bir16b}{misc}{
  author = {Birkar, Caucher},
  title={Singularities of linear systems and boundedness of Fano varieties},
  year = {2016},
  note = {https://arxiv.org/abs/1609.05543v1},
}

\bib{Bir17}{misc}{
  author = {Birkar, Caucher},
  title={Birational geometry of algebraic varieties},
  year = {2017},
  note = {https://arxiv.org/abs/1801.00013},
}

\bib{Bir16a}{article}{
   author={Birkar, Caucher},
   title={Anti-pluricanonical systems on Fano varieties},
   journal={Ann. of Math. (2)},
   volume={190},
   date={2019},
   number={2},
   pages={345--463},
   issn={0003-486X},
   review={\MR{3997127}},
   doi={10.4007/annals.2019.190.2.1},
}

\bib{BZ16}{article}{
   author={Birkar, Caucher},
   author={Zhang, De-Qi},
   title={Effectivity of Iitaka fibrations and pluricanonical systems of
   polarized pairs},
   journal={Publ. Math. Inst. Hautes \'Etudes Sci.},
   volume={123},
   date={2016},
   pages={283--331},
   issn={0073-8301},
   review={\MR{3502099}},
}

\bib{Cor07}{article}{
   author={Corti, Alessio},
   title={3-fold flips after Shokurov},
   conference={
      title={Flips for 3-folds and 4-folds},
   },
   book={
      series={Oxford Lecture Ser. Math. Appl.},
      volume={35},
      publisher={Oxford Univ. Press, Oxford},
   },
   date={2007},
   pages={18--48},
   review={\MR{2359340}},
   doi={10.1093/acprof:oso/9780198570615.003.0002},
}

\bib{CU15}{article}{
   author={Chiecchio, Alberto},
   author={Urbinati, Stefano},
   title={Ample Weil divisors},
   journal={J. Algebra},
   volume={437},
   date={2015},
   pages={202--221},
   issn={0021-8693},
   review={\MR{3351963}},
   doi={10.1016/j.jalgebra.2015.04.016},
}

\bib{Fil18}{article}{
   author={Filipazzi, Stefano},
   title={On a generalized canonical bundle formula and generalized adjunction},
   journal={to appear in Ann. Sc. Norm. Super. Pisa Cl. Sci. (5)},
   date={2020},
   doi={10.2422/2036-2145.201810\_001},
}

\bib{Fuj09}{article}{
   author={Fujino, Osamu},
   title={Effective base point free theorem for log canonical pairs---Koll\'ar
   type theorem},
   journal={Tohoku Math. J. (2)},
   volume={61},
   date={2009},
   number={4},
   pages={475--481},
   issn={0040-8735},
   review={\MR{2598245}},
   doi={10.2748/tmj/1264084495},
}

\bib{EGAIV2}{article}{
   author={Grothendieck, A.},
   title={\'{E}l\'{e}ments de g\'{e}om\'{e}trie alg\'{e}brique. IV. \'{E}tude locale des sch\'{e}mas et
   des morphismes de sch\'{e}mas. II},
   language={French},
   journal={Inst. Hautes \'{E}tudes Sci. Publ. Math.},
   number={24},
   date={1965},
   pages={231},
   issn={0073-8301},
   review={\MR{0199181}},
}

\bib{Hac14}{article}{
   author={Hacon, Christopher D.},
   title={On the log canonical inversion of adjunction},
   journal={Proc. Edinb. Math. Soc. (2)},
   volume={57},
   date={2014},
   number={1},
   pages={139--143},
   issn={0013-0915},
   review={\MR{3165017}},
   doi={10.1017/S0013091513000837},
}

\bib{Har77}{book}{
   author={Hartshorne, Robin},
   title={Algebraic geometry},
   note={Graduate Texts in Mathematics, No. 52},
   publisher={Springer-Verlag, New York-Heidelberg},
   date={1977},
   pages={xvi+496},
   isbn={0-387-90244-9},
   review={\MR{0463157}},
}

\bib{HMX14}{article}{
   author={Hacon, Christopher D.},
   author={M\textsuperscript{c}Kernan, James},
   author={Xu, Chenyang},
   title={ACC for log canonical thresholds},
   journal={Ann. of Math. (2)},
   volume={180},
   date={2014},
   number={2},
   pages={523--571},
   issn={0003-486X},
   review={\MR{3224718}},
   doi={10.4007/annals.2014.180.2.3},
}

\bib{HX13}{article}{
   author={Hacon, Christopher D.},
   author={Xu, Chenyang},
   title={Existence of log canonical closures},
   journal={Invent. Math.},
   volume={192},
   date={2013},
   number={1},
   pages={161--195},
   issn={0020-9910},
   review={\MR{3032329}},
   doi={10.1007/s00222-012-0409-0},
}

\bib{HX15}{article}{
   author={Hacon, Christopher D.},
   author={Xu, Chenyang},
   title={Boundedness of log Calabi-Yau pairs of Fano-type},
   journal={Math. Res. Lett.},
   volume={22},
   date={2015},
   number={6},
   pages={1699--1716},
   issn={1073-2780},
   review={\MR{3507257}},
   doi={10.4310/MRL.2015.v22.n6.a8},
}

\bib{KF04}{article}{
   author={Kudryavtsev, S. A.},
   author={Fedorov, I. Yu.},
   title={$\Bbb Q$-complements on log surfaces},
   language={Russian},
   journal={Tr. Mat. Inst. Steklova},
   volume={246},
   date={2004},
   number={Algebr. Geom. Metody, Svyazi i Prilozh.},
   pages={181--182},
   issn={0371-9685},
   translation={
      journal={Proc. Steklov Inst. Math.},
      date={2004},
      number={3(246)},
      pages={169--170},
      issn={0081-5438},
   },
   review={\MR{2101292}},
}

\bib{KMM85}{article}{
   author={Kawamata, Yujiro},
   author={Matsuda, Katsumi},
   author={Matsuki, Kenji},
   title={Introduction to the minimal model problem},
   conference={
      title={Algebraic geometry, Sendai, 1985},
   },
   book={
      series={Adv. Stud. Pure Math.},
      volume={10},
      publisher={North-Holland, Amsterdam},
   },
   date={1987},
   pages={283--360},
   review={\MR{946243}},
}
		
\bib{KM98}{book}{
   author={Koll\'ar, J\'anos},
   author={Mori, Shigefumi},
   title={Birational geometry of algebraic varieties},
   series={Cambridge Tracts in Mathematics},
   volume={134},
   note={With the collaboration of C. H. Clemens and A. Corti;
   Translated from the 1998 Japanese original},
   publisher={Cambridge University Press, Cambridge},
   date={1998},
   pages={viii+254},
   isbn={0-521-63277-3},
   review={\MR{1658959}},
}

\bib{Kol93}{article}{
   author={Koll\'ar, J\'anos},
   title={Effective base point freeness},
   journal={Math. Ann.},
   volume={296},
   date={1993},
   number={4},
   pages={595--605},
   issn={0025-5831},
   review={\MR{1233485}},
   doi={10.1007/BF01445123},
}

\bib{Kol13}{book}{
   author={Koll\'{a}r, J\'{a}nos},
   title={Singularities of the minimal model program},
   series={Cambridge Tracts in Mathematics},
   volume={200},
   note={With a collaboration of S\'{a}ndor Kov\'{a}cs},
   publisher={Cambridge University Press, Cambridge},
   date={2013},
   pages={x+370},
   isbn={978-1-107-03534-8},
   review={\MR{3057950}},
   doi={10.1017/CBO9781139547895},
}

\bib{Kud04}{article}{
   author={Kudryavtsev, S. A.},
   title={Complements on log surfaces},
   language={Russian, with Russian summary},
   journal={Mat. Sb.},
   volume={195},
   date={2004},
   number={6},
   pages={99--120},
   issn={0368-8666},
   translation={
      journal={Sb. Math.},
      volume={195},
      date={2004},
      number={5-6},
      pages={859--878},
      issn={1064-5616},
   },
   review={\MR{2091582}},
   doi={10.1070/SM2004v195n06ABEH000828},
}

\bib{Laz04a}{book}{
   author={Lazarsfeld, Robert},
   title={Positivity in algebraic geometry. I},
   series={Ergebnisse der Mathematik und ihrer Grenzgebiete. 3. Folge. A
   Series of Modern Surveys in Mathematics [Results in Mathematics and
   Related Areas. 3rd Series. A Series of Modern Surveys in Mathematics]},
   volume={48},
   note={Classical setting: line bundles and linear series},
   publisher={Springer-Verlag, Berlin},
   date={2004},
   pages={xviii+387},
   isbn={3-540-22533-1},
   review={\MR{2095471}},
   doi={10.1007/978-3-642-18808-4},
}

\bib{Pro00}{article}{
   author={Prokhorov, Yuri G.},
   title={Boundedness of nonbirational extremal contractions},
   journal={Internat. J. Math.},
   volume={11},
   date={2000},
   number={3},
   pages={393--411},
   issn={0129-167X},
   review={\MR{1769614}},
   doi={10.1142/S0129167X00000207},
}

\bib{Pro01}{article}{
   author={Prokhorov, Yu. G.},
   title={Complements on conic fibrations. I},
   note={Algebraic geometry, 11},
   journal={J. Math. Sci. (New York)},
   volume={106},
   date={2001},
   number={5},
   pages={3353--3364},
   issn={1072-3374},
   review={\MR{1878055}},
   doi={10.1023/A:1017915728271},
}

\bib{PS01}{article}{
   author={Prokhorov, Yu. G.},
   author={Shokurov, V. V.},
   title={The first fundamental theorem on complements: from global to
   local},
   language={Russian, with Russian summary},
   journal={Izv. Ross. Akad. Nauk Ser. Mat.},
   volume={65},
   date={2001},
   number={6},
   pages={99--128},
   issn={1607-0046},
   translation={
      journal={Izv. Math.},
      volume={65},
      date={2001},
      number={6},
      pages={1169--1196},
      issn={1064-5632},
   },
   review={\MR{1892905}},
   doi={10.1070/IM2001v065n06ABEH000366},
}

\bib{PS09}{article}{
   author={Prokhorov, Yu. G.},
   author={Shokurov, V. V.},
   title={Towards the second main theorem on complements},
   journal={J. Algebraic Geom.},
   volume={18},
   date={2009},
   number={1},
   pages={151--199},
   issn={1056-3911},
   review={\MR{2448282}},
   doi={10.1090/S1056-3911-08-00498-0},
}
		
\bib{Sho92}{article}{
   author={Shokurov, V. V.},
   title={Three-dimensional log perestroikas},
   language={Russian},
   journal={Izv. Ross. Akad. Nauk Ser. Mat.},
   volume={56},
   date={1992},
   number={1},
   pages={105--203},
   issn={1607-0046},
   translation={
      journal={Russian Acad. Sci. Izv. Math.},
      volume={40},
      date={1993},
      number={1},
      pages={95--202},
      issn={1064-5632},
   },
   review={\MR{1162635}},
   doi={10.1070/IM1993v040n01ABEH001862},
}

\bib{Sho00}{article}{
   author={Shokurov, V. V.},
   title={Complements on surfaces},
   note={Algebraic geometry, 10},
   journal={J. Math. Sci. (New York)},
   volume={102},
   date={2000},
   number={2},
   pages={3876--3932},
   issn={1072-3374},
   review={\MR{1794169}},
   doi={10.1007/BF02984106},
}
	
\end{biblist}
\end{bibdiv}

\end{document}